\documentclass[10pt, a4paper, leqno]{amsart}
\usepackage[utf8]{inputenc} 
\usepackage[english]{babel} 
\usepackage[usenames, dvipsnames]{xcolor}
\usepackage[a4paper, tmargin=1.0in,bmargin=1.0in, lmargin=1in, rmargin=1in]{geometry} 
\usepackage{bm} 
\usepackage{amsmath} 
\usepackage{amsthm}
\usepackage{amssymb}
\usepackage{dsfont}
\usepackage{cases} 
\usepackage{tikz} 
\usetikzlibrary{arrows.meta, decorations.markings, calc}
\usepackage{stmaryrd} 
\usepackage{multirow} 
\usepackage{lipsum} 
\usepackage{wrapfig} 

\usepackage{breakcites} 
\usepackage{mathtools}   
\usepackage{cases}
\usepackage{float}
\usepackage{esint}

\usepackage{pgfplots}
\pgfplotsset{compat=1.15}

\usepackage[colorlinks=true,linkcolor=BrickRed,citecolor=RoyalBlue]{hyperref} 
\usepackage{cleveref} 

\theoremstyle{plain}
\newtheorem{proposition}{Proposition}%
\newtheorem{lemma}{Lemma}%
\newtheorem*{theorem*}{Theorem}
\newtheorem{informalresult}{Property}%
\theoremstyle{definition}
\newtheorem{assumption}{Assumption}%
\theoremstyle{remark}
\newtheorem{remark}{Remark}%
\newtheorem{example}{Example}%

\newcommand{\strong}[1]{\emph{#1}}
\newcommand{\confer}{\emph{cf.}}
\newcommand{\idEst}{\emph{i.e.}}

\newcommand{\advectionVelocity}{a}
\newcommand{\solutionCauchyProblem}{u}
\newcommand{\definitionEquality}{:=}
\newcommand{\indexSpace}{j}
\newcommand{\timeVariable}{t}
\newcommand{\spaceVariable}{x}
\newcommand{\initial}{\circ}

\newcommand{\timeGridPoint}[1]{\timeVariable^{#1}}
\newcommand{\indexTime}{n}
\newcommand{\spaceStep}{\Delta \spaceVariable}
\newcommand{\timeStep}{\Delta \timeVariable}
\newcommand{\relatives}{\mathbb{Z}}
\newcommand{\reals}{\mathbb{R}}
\newcommand{\complex}{\mathbb{C}}

\newcommand{\naturals}{\mathbb{N}}

\newcommand{\finalTime}{T}
\newcommand{\conservedMoment}{u}
\usepackage{mathrsfs}  
\newcommand{\courantNumber}{\mathscr{C}}

\newcommand{\bigO}[1]{\mathcal{O}(#1)}
\newcommand{\globalTruncationErrorNotNormalized}{e}
\newcommand{\globalTruncationError}{\epsilon}
\newcommand{\fourierShift}{\kappa}
\newcommand{\timeShiftOperator}{z}

\newcommand{\differential}[1]{\text{d}#1}
\newcommand{\zTransformed}[1]{\tilde{#1}}
\newcommand{\stableRoot}{\fourierShift_{\textnormal{s}}}
\newcommand{\unitCircle}{\mathbb{S}}
\newcommand{\unitDisk}{\mathbb{D}}
\newcommand{\closedUnitDisk}{\overline{\mathbb{D}}}
\newcommand{\neighborhoodInfinity}{\mathbb{U}}
\newcommand{\closedNeighborhoodInfinity}{\overline{\mathbb{U}}}
\newcommand{\coefficientStableSolution}{C_{\textnormal{s}}}

\newcommand{\doubleZTransformed}[1]{\check{#1}}
\newcommand{\firstGreenFunction}{\mathscr{F}}
\newcommand{\secondGreenFunction}{\mathscr{S}}
\newcommand{\fourierTransformed}[1]{\hat{#1}}
\newcommand{\frequency}{\xi}

\newcommand{\coefficientCornerScheme}{c}
\newcommand{\coefficientInitialBulkScheme}{s}
\newcommand{\coefficientEventualBoundaryScheme}{b}
\newcommand{\coefficientEventualBoundarySchemeOld}{\tilde{b}}

\newcommand{\branchPointAngle}{\vartheta_{\textnormal{BP}}}
\newcommand{\saddlePoint}{\timeShiftOperator_{\textnormal{SP}}}
\newcommand{\saddlePointAngle}{\vartheta_{\textnormal{SP}}}
\newcommand{\ksAtsaddlePointAngle}{\xi_{\textnormal{SP}}}
\newcommand{\ksAtsaddlePointAngleplain}{\xi}

\newcommand{\relaxationParameter}{\omega}

\newcounter{review}

\title[Long-time behavior of multi-step FD schemes with boundary]{Long-time behavior of multi-step Finite Difference schemes with boundary via steepest descent and analytic combinatorics}

\keywords{Multi-step Finite Difference, leap-frog scheme, steepest descent, saddle points, analytic combinatorics}
\subjclass[2020]{65M06, 65M12, 39A06, 39A14, 39A22, 39A60, 05A16}

\author{Thomas Bellotti}
\address{Université Paris-Saclay, CNRS, CentraleSupélec, Laboratoire EM2C \& Fédération de Mathématiques de CentraleSupélec, 91190, Gif-sur-Yvette, France}
\email{thomas.bellotti@centralesupelec.fr}

\author{Tommaso Tenna}
\address{Laboratoire J. A. Dieudonné, Université Côte d’Azur, CNRS, F-06108 Nice, France \& Dipartimento di Matematica ``Guido Castelnuovo'', Sapienza Università di Roma, 00185 Rome, Italy}
\email{tommaso.tenna@uniroma1.it}

\allowdisplaybreaks
\begin{document}

\maketitle

\begin{abstract}
    We demonstrate how steepest descent arguments and singularity analysis from analytic combinatorics allow for an accurate description of the behavior of linear numerical schemes---including the notorious leap-frog scheme---in presence of stable and unstable boundary conditions in the long-time limit.
\end{abstract}


\section{Introduction}

\strong{Steepest descent} techniques are widely used to precisely estimate the leading behavior of complex integrals over a contour $C$ of the form
\begin{equation}\label{eq:saddlePointIntegralAbstract}
    \int_C g(\timeShiftOperator)e^{\indexTime f(\timeShiftOperator)}\differential{\timeShiftOperator} 
    \qquad 
    \text{in the limit of \strong{large} }\indexTime,
\end{equation}
by invoking contour deformation to pass through---or very close to---critical points of $f$ along steepest descent directions.
In the present contribution, we discuss how these tools can be used to understand the behavior of \strong{linear multi-step Finite Difference approximations} in presence of a \strong{boundary}.

\subsection{Aims of the paper and motivation}

More precisely, the first aim of the paper, which has originally stimulated our study, is to understand the \strong{empirical order of convergence} and \strong{structure of the solution} of a non-dissipative bulk numerical scheme (\strong{e.g.}, the leap-frog scheme) once used together with an inconsistent scheme on the spatio-temporal corner.
Although this setting may appear unconventional at first sight, it naturally corresponds to the case of a simple lattice Boltzmann scheme made second-order accurate---a recent trend in the literature \cite{guillon2024stability, bellotti2025fourth, wissocq2025positive}---initialized ``at equilibrium'' \cite{bellotti2024initialisation}, and endowed with a first-order extrapolation of the lacking information at the boundary \cite{bellotti2025consistency}. 
For concreteness, consider
\begin{equation}
\label{eq:LinearAdvectionIntroduction}
    \begin{cases}
        \partial_{\timeVariable}\solutionCauchyProblem(\timeVariable, \spaceVariable) + \advectionVelocity \partial_{\spaceVariable}\solutionCauchyProblem(\timeVariable, \spaceVariable) = 0, \qquad \timeVariable > 0, \quad &\spaceVariable>0, \\
        \solutionCauchyProblem(0, \spaceVariable) = \solutionCauchyProblem^{\initial}(\spaceVariable), \qquad &\spaceVariable>0, 
    \end{cases}
\end{equation}
with $\advectionVelocity<0$, exact solution $\solutionCauchyProblem(\timeVariable, \spaceVariable) = \solutionCauchyProblem^{\initial}(\spaceVariable-\advectionVelocity\timeVariable)$, and smooth initial data $\solutionCauchyProblem^{\initial}$.
The approximation on a uniform time-space mesh of steps $\timeStep$-$\spaceStep$ of fixed ratio is taken as 
\begin{align}
    &\conservedMoment_{\indexSpace}^0 = \solutionCauchyProblem^{\initial}(\indexSpace\spaceStep), \qquad \indexSpace\in\naturals, \\
    &\conservedMoment_0^1 = \sum_{k\in\naturals}\coefficientCornerScheme_k \conservedMoment_k^0, \qquad \conservedMoment_{\indexSpace}^1 = \sum_{k \geq -1}\coefficientInitialBulkScheme_k \conservedMoment_{\indexSpace+k}^0, \quad \indexSpace \geq 1, \\
    \indexTime\geq 1 \qquad & \conservedMoment_0^{\indexTime + 1} = \sum_{k\in\naturals}\coefficientEventualBoundaryScheme_k \conservedMoment_k^{\indexTime} + \sum_{k\in\naturals}\coefficientEventualBoundarySchemeOld_k \conservedMoment_k^{\indexTime-1}, \qquad \conservedMoment_{\indexSpace}^{\indexTime + 1} = \conservedMoment_{\indexSpace}^{\indexTime - 1} +\courantNumber (\conservedMoment_{\indexSpace-1}^{\indexTime} - \conservedMoment_{\indexSpace+1}^{\indexTime}), \quad \indexSpace\geq 1,\label{eq:leapFrogEventually}
\end{align}
where the real sequences $(\coefficientCornerScheme_k)_k, (\coefficientInitialBulkScheme_k)_k, (\coefficientEventualBoundaryScheme_k)_k, (\coefficientEventualBoundarySchemeOld_k)_k$ are compactly supported.
We now assume that these coefficients fulfill the following order-constraints:
\begin{multline}\label{eq:orderConstraints}
    \underbrace{\sum_{k\in\naturals}\coefficientCornerScheme_k = 1 \quad 
        \text{and}\quad 
    \sum_{k\in\naturals}k \coefficientCornerScheme_k \neq -\courantNumber,}_{\text{0th-order time-space corner scheme}}
    \qquad 
    \underbrace{\sum_{k\geq -1}\coefficientInitialBulkScheme_k = 1 \quad 
        \text{and}\quad 
    \sum_{k\geq -1}k \coefficientInitialBulkScheme_k = -\courantNumber,}_{\text{at least 1st-order initial-time/space-bulk scheme}}
    \\ 
    \underbrace{\sum_{k\in\naturals} (\coefficientEventualBoundaryScheme_k+\coefficientEventualBoundarySchemeOld_k) = 1
        \quad\text{and}\quad
        \sum_{k\in\naturals} k(\coefficientEventualBoundaryScheme_k+\coefficientEventualBoundarySchemeOld_k) = -\courantNumber \Bigl(1+\sum_{k\in\naturals} \coefficientEventualBoundarySchemeOld_k\Bigr ).}_{\text{at least 1st-order eventual-time/boundary scheme}}
\end{multline}
The \strong{Courant number} is defined by $\courantNumber\definitionEquality \advectionVelocity \timeStep/\spaceStep$ and $\conservedMoment_{\indexSpace}^{\indexTime}$ has to be interpreted as an approximation of $\solutionCauchyProblem(\indexTime\timeStep, \indexSpace\spaceStep)$, where $\solutionCauchyProblem$ is the solution of \eqref{eq:LinearAdvectionIntroduction}.
The linearity of the problem entails that the \strong{global truncation error} $\globalTruncationErrorNotNormalized_{\indexSpace}^{\indexTime}\definitionEquality \solutionCauchyProblem(\indexTime\timeStep, \indexSpace\spaceStep) - \conservedMoment_{\indexSpace}^{\indexTime}$ fulfills
\begin{align*}
    &\globalTruncationErrorNotNormalized_{\indexSpace}^0 = 0, \qquad \indexSpace\in\naturals, \\
    &\globalTruncationErrorNotNormalized_0^1 = -\spaceStep \Bigl ( \courantNumber + \sum_{k\in\naturals} k \coefficientCornerScheme_k \Bigr )\frac{\differential{\solutionCauchyProblem^{\initial}(0)}}{\differential{\spaceVariable}}+ \bigO{\spaceStep^2}, \qquad \globalTruncationErrorNotNormalized_{\indexSpace}^1 = \bigO{\spaceStep^2}, \quad \indexSpace \geq 1, \\
    \indexTime\geq 1 \qquad & \globalTruncationErrorNotNormalized_0^{\indexTime + 1} = \sum_{k\in\naturals}\coefficientEventualBoundaryScheme_k \globalTruncationErrorNotNormalized_k^{\indexTime} + \sum_{k\in\naturals}\coefficientEventualBoundarySchemeOld_k \globalTruncationErrorNotNormalized_k^{\indexTime-1} + \bigO{\spaceStep^2}, \qquad \globalTruncationErrorNotNormalized_{\indexSpace}^{\indexTime + 1} = \globalTruncationErrorNotNormalized_{\indexSpace}^{\indexTime - 1} +\courantNumber (\globalTruncationErrorNotNormalized_{\indexSpace-1}^{\indexTime} - \globalTruncationErrorNotNormalized_{\indexSpace+1}^{\indexTime}) + \bigO{\spaceStep^3} , \quad \indexSpace\geq 1,
\end{align*}
thanks to \eqref{eq:orderConstraints}, following Taylor expansions\footnote{In the previous equations, we sloppily employ the notation $\bigO{\spaceStep^s}$ without precisely specifying any uniform character of these reminders. However, as the initial datum is supposed smooth, these terms can be made explicit and uniform, depending on the $W^{s, \infty}$ semi-norm of the initial datum.}.
The issue with the scheme is that, whenever $\frac{\differential{\solutionCauchyProblem^{\initial}}(0)}{\differential{\spaceVariable}} \neq 0$, the $L^2$ error at final time is empirically of order $\bigO{\spaceStep^{3/2}}$. 
This phenomenon was not observed in \cite[Chapter 6.3]{helie2023schema}, since the initial datum considered there has a vanishing derivative at $\spaceVariable = 0$.
We assume that $\frac{\differential{\solutionCauchyProblem^{\initial}(0)}}{\differential{\spaceVariable}}\neq 0$, and normalize the leading source of the global truncation error to one, neglecting all the other terms, which are at least $\bigO{\spaceStep^2}$.
We thus eventually consider
\begin{align}
    &\globalTruncationError_{\indexSpace}^0 = 0, \qquad \indexSpace\in\naturals, \label{eq:zeroInitialError} \\
    &\globalTruncationError_0^1 = 1, \qquad \globalTruncationError_{\indexSpace}^1 = 0, \quad \indexSpace \geq 1, \label{eq:schemeErrorInitial} \\
    \indexTime\geq 1 \qquad & \globalTruncationError_0^{\indexTime + 1} = \sum_{k\in\naturals}\coefficientEventualBoundaryScheme_k \globalTruncationError_k^{\indexTime} + \sum_{k\in\naturals}\coefficientEventualBoundarySchemeOld_k \globalTruncationError_k^{\indexTime-1}, \qquad \globalTruncationError_{\indexSpace}^{\indexTime + 1} = \globalTruncationError_{\indexSpace}^{\indexTime - 1} +\courantNumber (\globalTruncationError_{\indexSpace-1}^{\indexTime} - \globalTruncationError_{\indexSpace+1}^{\indexTime}), \quad \indexSpace\geq 1,\label{eq:schemeErrorEventual}
\end{align}
as a \strong{reliable model} for the global truncation error at leading order in $\spaceStep$.
Indeed, if stability holds, we expect $\globalTruncationErrorNotNormalized_{\indexSpace}^{\indexTime} = -\spaceStep  ( \courantNumber + \sum_{k\in\naturals} k \coefficientCornerScheme_k  )\frac{\differential{\solutionCauchyProblem^{\initial}(0)}}{\differential{\spaceVariable}} \globalTruncationError_{\indexSpace}^{\indexTime}+ \bigO{\spaceStep^2} $.
Accordingly, the remainder of this work is devoted to the analysis of $\globalTruncationError_{\indexSpace}^{\indexTime}$.
\begin{remark}[Stability]
    In the following, we do not claim anything on the \strong{stability} of the numerical scheme. 
    A full and rigorous stability analysis of the scheme would require accounting for the additional source terms that have been neglected to accurately control their impact on the norms numerical solution.
    Thus, the order of convergence with respect to the $L^p$ (with $1\leq p \leq\infty$) norms must be addressed from an \strong{empirical perspective}, given that the leap-frog scheme is known to be unstable on an infinite--periodic domain ---as it is the case for many dispersive schemes---for every $p\neq 2$, see \cite{trefethen1984lp}.
    Nevertheless, the work of \cite{estep1990boundedness} has shown that stability in $L^p$ norms with $p\neq 2$ can be expected for initial data of bounded variation, rendering such instabilities difficult to observe unless ad hoc-designed initial conditions are employed. 
\end{remark}

The second aim of this paper is to finely describe the \strong{structure of solutions} to the time-space recurrence relation \eqref{eq:zeroInitialError}--\eqref{eq:schemeErrorInitial}--\eqref{eq:schemeErrorEventual} (or analogous ones), independently on the consistency of the underlying schemes described by \eqref{eq:orderConstraints}, where the leading-order error originates from the time-space corner.
Quite the opposite, \eqref{eq:zeroInitialError}--\eqref{eq:schemeErrorInitial}--\eqref{eq:schemeErrorEventual} model the setting considered in \cite{bellotti2025stability}, where strong stability-instability (frequently known as GKS, for Gustafsson, Kreiss, and Sundstr{\"o}m \cite{gustafsson1972stability}) of boundary conditions was numerically showcased by considering boundary data set to one at initial time, and zero thereafter.

Regardless of the meaning that we assign to the solution of \eqref{eq:zeroInitialError}--\eqref{eq:schemeErrorInitial}--\eqref{eq:schemeErrorEventual}, we bring its study back to an integral of the form \eqref{eq:saddlePointIntegralAbstract}, where $g$ encodes the boundary condition depending on $\coefficientEventualBoundaryScheme_k$ and $\coefficientEventualBoundarySchemeOld_k$, whereas $f$ is determined solely by the bulk numerical scheme.
Two significant classes of points in the complex plane provide significant contributions to \eqref{eq:saddlePointIntegralAbstract}:
\begin{itemize}
    \item \strong{Saddle points} of $f$, namely $\saddlePoint\in\complex$ such that $f'(\saddlePoint) = 0$.
    Once these points are found, they are further classified as 
    \begin{itemize}
        \item \strong{non-degenerate}, when $f''(\saddlePoint)\neq 0$, and a standard steepest descent procedure can be used, see \cite{arfken2011mathematical} for example;
        \item \strong{degenerate}, when $f''(\saddlePoint)= 0$, and the more subtle procedure in \cite{chester1957extension} needs to be employed.
    \end{itemize}
    \item \strong{Poles} of $g$ of modulus larger or equal to one, which are symptomatic of a GKS-unstable boundary condition. 
\end{itemize}
It is worth noting that saddle points of $f(\timeShiftOperator)$ and poles of $g(\timeShiftOperator)$ can coincide, giving rise to interesting behaviors that must be addressed.
When no saddle point is present, typically in the setting where we look at $\indexTime\mapsto \globalTruncationError_{\indexSpace}^{\indexTime}$ with $\indexSpace\in\naturals$ fixed for $\indexTime\to+\infty$, or with the study of $\indexTime\mapsto \sum_{\indexSpace\geq 0}\globalTruncationError_{\indexSpace}^{\indexTime}$ and $\indexTime\mapsto \sum_{\indexSpace\geq 0}(\globalTruncationError_{\indexSpace}^{\indexTime})^2$, we rely on techniques germane to \strong{monovariate analytic combinatorics}, see \cite{flajolet2009analytic}.
Back to the case where saddle points are present, it is interesting to observe that asymptotics could be investigated, despite involved mathematics, in the context of \strong{multivariate analytic combinatorics} \cite{melczer2021invitation}. 
We do not pursue this path in the present study, which would, however, yield analogous results, since asymptotics in multivariate analytic combinatorics are also based on steepest descent approximations.

\subsection{State of the art}

Let us now review existing literature concerning the analysis of numerical schemes through \strong{steepest descent}/\strong{stationary phase} arguments or---more broadly---\strong{contour deformation}.

Contributions started in the 1970s with \cite{hedstrom1975models}, who studied the numerical solution of one-step dissipative schemes on $\relatives$\footnote{Thus with Fourier transform being a valid tool.} with the initial condition being a step function.
Roughly at the same time, \cite{serdyukova1971oscillations} considers Hedstrom's framework, with extensions allowing implicit schemes and several (finite number) contact points of the symbol with the unit circle, where the scheme is dissipative of some order. Schemes are thus not dissipative in the strictest possible sense.
In this paper, the presence of a pole, due to the initial step function, coinciding with a saddle point, and related difficulties, are clearly made explicit.
At the end of this work, a brief account of the behavior of non-dissipative schemes is given (\confer{}, the late work of \cite{estep1990boundedness}).
Nearly three decades later, \cite{bouche2003comparison} proposed experimental verification of the trends highlighted by Hedstrom and Serdyukova. Bouche also recently published a textbook \cite{bouche2024analyse} where---\emph{inter alia}---the behavior of the sum of the two Green functions (\idEst{}, the solutions with Dirac delta as initial data, see \Cref{sec:GreenFunction}) of the leap-frog scheme is studied in the zones where saddle points are non-degenerate. 
Authors of \cite{coulombel2022generalized, coulombel2023sharp} consider Serdyukova's dissipative setting and study the Green functions of the schemes strongly relying on contour deformation. Similar results in this framework with explicit schemes are obtained in \cite{coeuret2025local}, whose work generalizes part of that in \cite{randles2015convolution} by providing terms of arbitrary order in the asymptotic expansions. However, it must be noted that the work of Randles and Saloff-Coste also deals with schemes where the symbol does not ``dissipate'' at the contact points with the unit circle.
We also mention works by Trefethen \cite{trefethen1984instability, trefethen1985stability} where stationary phase arguments are utilized.

Concerning problems involving boundaries, the work of \cite{chin1975dispersion} provides a key contribution to the topic. 
The author analyzes a semi-discretized staggered approximation for the two-way wave equation on a segment with zero initial and right-boundary data, and a step function as left-boundary datum. The procedure is based on the Laplace transform and on rewriting the discrete solution using Bessel functions, which are eventually studied by the steepest descent method that faces saddle points coinciding with a pole, as it was the case in Serdyukova's work.

\subsection{Plan of the paper}
The rest of the paper is structured as follows.
In \Cref{sec:mainResults}, we outline the main results concerning the leap-frog scheme and a dissipative first-order two-steps scheme endowed with stable and unstable boundary conditions.
These results are described in a coarse-grained and qualitative fashion that provides insight on the ``physics''  of such numerical algorithms and its causes---rather than precise quantitative statements.
This latter level of detail is the aim of \Cref{sec:detailedResults}, which rigorously states results and provides their proofs.
In \Cref{sec:GreenFunction}, which retrospectively complements the discussion with boundary, we apply the same techniques to a simpler problem, that is the analysis of the Green functions of the boundary-less leap-frog scheme.
General conclusions are drawn in \Cref{sec:Conclusions}.

\section{Outline of the main results}\label{sec:mainResults}
In what follows, we need to consider functions of complex variables instead of real ones.
We thus introduce the following notations
\begin{align*}
	\unitCircle\definitionEquality\{\timeShiftOperator\in\complex\quad\text{s.t.}\quad |\timeShiftOperator|=1\}, \qquad
	\unitDisk &\definitionEquality \{\timeShiftOperator\in\complex\quad\text{s.t.}\quad |\timeShiftOperator|<1\}, \qquad \closedUnitDisk\definitionEquality \unitDisk\cup\unitCircle, \\
	\neighborhoodInfinity &\definitionEquality \{\timeShiftOperator\in\complex\quad\text{s.t.}\quad |\timeShiftOperator|>1\}, \qquad \closedNeighborhoodInfinity\definitionEquality \neighborhoodInfinity\cup\unitCircle.
\end{align*}
Moreover, for a sequence $\naturals\ni\indexSpace\mapsto\globalTruncationError_{\indexSpace}$, we consider the norms ($1\leq p < \infty$)
\begin{equation*}
    \lVert \globalTruncationError\rVert_p
    \definitionEquality
    \Bigl ( 
    \sum_{\indexSpace\in\naturals}
    |\globalTruncationError_{\indexSpace}|^p
    \Bigr )^{1/p}, 
    \quad 
    \lVert \globalTruncationError\rVert_{\infty}
    \definitionEquality
    \sup_{\indexSpace\in\naturals}
    |\globalTruncationError_{\indexSpace}|, \qquad
    \lVert \globalTruncationError\rVert_{\ell^p(\spaceStep\naturals)}
    \definitionEquality
    \Bigl ( 
    \sum_{\indexSpace\in\naturals}
    \spaceStep|\globalTruncationError_{\indexSpace}|^p
    \Bigr )^{1/p}, 
    \quad 
    \lVert \globalTruncationError\rVert_{\ell^{\infty}(\spaceStep\naturals)}
    \definitionEquality
    \lVert \globalTruncationError\rVert_{\infty}.
\end{equation*}

\subsection{Leap-frog bulk scheme}\label{sec:leapFrog}
Before analyzing the long-time behavior of solutions in presence of boundary conditions, let us first establish the following assumption on the inherent stability of the leap-frog bulk scheme. In particular, we restrict the Courant number $\courantNumber$ to ensure $L^2$ stability and consider the boundary to be an outflow. 

\begin{assumption}[Stable bulk scheme and outflow]\label{ass:stableBulk}
    Assume that $-1<\courantNumber<0$, so that the bulk scheme \eqref{eq:leapFrogEventually} without boundary ($\indexSpace\in\relatives$) is $L^2$ stable, and that the considered boundary is an outflow.
\end{assumption}
Under the stability condition by \Cref{ass:stableBulk}, the two amplification factors (or symbols) associated with the boundary-less leap-frog scheme belong to $\unitCircle$ for every harmonics, see \cite[Chapter 4]{strikwerda2004finite} and \Cref{sec:GreenFunction}. This entails that, loosely speaking, \strong{no frequency is damped} in time, whence initial disturbances---such as a time-space corner error---generate rather involved solutions at later times.
This also implies some sorts of ``asymptotic'' \strong{conservation of energy}, \idEst{} the $L^2$ norm in space---see \Cref{prop:asymptoticL2Stable} and \ref{prop:asymptoticL2StableGreen}, for the scheme acts as a friction-less medium.

\subsubsection{Stable boundary conditions}\label{sec:stableBC}

\begin{assumption}[Stable boundary conditions]\label{ass:stableBC}
    Let $\stableRoot(\timeShiftOperator)$ be the root of $\timeShiftOperator^2 - \courantNumber\timeShiftOperator(\stableRoot(\timeShiftOperator)^{-1}-\stableRoot(\timeShiftOperator)) - 1=0$ such that $\stableRoot(\timeShiftOperator)\in\unitDisk$ for $\timeShiftOperator\in\neighborhoodInfinity$.
    Assume that the function
    \begin{equation*}
        \timeShiftOperator \mapsto \timeShiftOperator^2 - \timeShiftOperator \sum_{k\in\naturals}\coefficientEventualBoundaryScheme_k \stableRoot(\timeShiftOperator)^{k} - \sum_{k\in\naturals}\coefficientEventualBoundarySchemeOld_k \stableRoot(\timeShiftOperator)^k
        \quad \text{\strong{does not have any zero} in }\closedNeighborhoodInfinity.
    \end{equation*}
\end{assumption}

\begin{example}[Upwind boundary scheme]\label{ex:upwind}
    This important example is based on $\coefficientEventualBoundaryScheme_0 = 1+\courantNumber$ and $\coefficientEventualBoundaryScheme_1 = -\courantNumber$ and all other coefficients equal to zero, and corresponds to \cite[Equation (12.2.2d)]{strikwerda2004finite}, where it is proven that the scheme fulfills \Cref{ass:stableBC}.
    This boundary scheme also arises within lattice Boltzmann schemes in \cite{bellotti2025consistency}, and verifies \eqref{eq:orderConstraints}.
\end{example}

We avail of the previous example to ponder something: the recurrent definition of $\globalTruncationError_{\indexSpace}^{\indexTime}$ by \eqref{eq:zeroInitialError}--\eqref{eq:schemeErrorInitial}--\eqref{eq:schemeErrorEventual}, although simple and easily computer-implementable, does not give much insight into the behavior of $\globalTruncationError_{\indexSpace}^{\indexTime}$ as $\indexTime$ grows.
On the other hand, although an explicit expression of $\globalTruncationError_{\indexSpace}^{\indexTime}$ for \Cref{ex:upwind} is available---see Appendix \ref{app:explicitExpressionUpwind}---it is neither simple, nor numerically-stable, nor sheds any light on the ``physics'' of the scheme as $\indexTime\to+\infty$.
\begin{example}\label{ex:upwindWithDiffusion}
    Adding a three-point Laplacian at time $\timeGridPoint{\indexTime-1}$ to the boundary scheme in \Cref{ex:upwind}, we get $\coefficientEventualBoundaryScheme_0 = 1+\courantNumber$ and $\coefficientEventualBoundaryScheme_1 = -\courantNumber$, plus $\coefficientEventualBoundarySchemeOld_{0} = \tfrac{1}{2}\delta \courantNumber$, $\coefficientEventualBoundarySchemeOld_1 = -\delta \courantNumber$, and $\coefficientEventualBoundarySchemeOld_2 = \tfrac{1}{2}\delta \courantNumber$, with $\delta \in\reals$.
    With this choice, \eqref{eq:orderConstraints} is fulfilled regardless of the value of $\delta$.
    Usually, when $\delta$ is close to zero, \Cref{ass:stableBC} is fulfilled.
\end{example}
\begin{example}[Dirichlet boundary conditions]\label{ex:Dirichlet}
    If we take $\coefficientEventualBoundaryScheme_k = \coefficientEventualBoundarySchemeOld_k = 0$ for all $k\in\naturals$, this condition does not fulfill \eqref{eq:orderConstraints} but verifies \Cref{ass:stableBC} by direct inspection, see also the Goldberg-Tadmor lemma \cite{goldbergtadmor1981, coulombel2011semigroup}.
\end{example}

\begin{informalresult}[Structure of $\globalTruncationError_{\indexSpace}^{\indexTime}$ for $\indexTime\gg 1$: stable boundary conditions]\label{property:stable}
    Let $\globalTruncationError_{\indexSpace}^{\indexTime}$ be the solution of \eqref{eq:zeroInitialError}--\eqref{eq:schemeErrorInitial}--\eqref{eq:schemeErrorEventual}, and \Cref{ass:stableBulk} and \ref{ass:stableBC} be fulfilled.
    For $\indexTime\gg 1$, $\globalTruncationError_{\indexSpace}^{\indexTime}$ features four zones according the value of $\indexSpace$, given as follows.
    \begin{enumerate}
        \item A \strong{near-wall} zone at $\indexSpace/\indexTime \sim 0$, where $\globalTruncationError_{\indexSpace}^{\indexTime} = \bigO{\indexTime^{-3/2}}$ and features a grid-scale--oscillating profile depending on the boundary conditions modulated by slowly oscillating profile from the bulk scheme.
        The detailed claim is \Cref{prop:nearWallStable}.
        \item A \strong{transition} zone for $\indexSpace/\indexTime\sim \nu$ with $\nu \in (0, |\courantNumber|)$, where $\globalTruncationError_{\indexSpace}^{\indexTime} = \bigO{\indexTime^{-1/2}}$ and features a grid-scale--oscillating profile depending on the boundary conditions modulated by slowly oscillating profile from the bulk scheme.
        The detailed claim is \Cref{prop:transitionZoneStable}.
        \item A \strong{front} zone for $\indexSpace/\indexTime\sim |\courantNumber|$, where $\globalTruncationError_{\indexSpace}^{\indexTime} = \bigO{\indexTime^{-1/3}}$ and features a grid-scale--oscillating profile depending on the boundary conditions modulated by an \strong{Airy function} from the bulk scheme.
        The detailed claim is \Cref{prop:frontZoneStable}.
        \item A zone \strong{ahead-of-the-front} for $\indexSpace/\indexTime\sim \nu$ with $\nu \in (|\courantNumber|, 1]$, where $\globalTruncationError_{\indexSpace}^{\indexTime}$ exponentially goes to zero with $\indexTime$.
    \end{enumerate}
    
    Moreover, we have that for $\indexTime\gg 1$, $\lVert
        \globalTruncationError^{\indexTime}
        \rVert_2
        =
        C
        +\bigO{\indexTime^{-3/2}}$,
    where the constant $C$ depends on the boundary scheme and is precisely given in \Cref{prop:asymptoticL2Stable}.
\end{informalresult}

\begin{remark}[Link with saddle points]
    The previous behaviors come from saddle points, as follows (see \Cref{lemma:saddlePoints} for more information).
    \begin{enumerate}
        \item \strong{Near-wall} zone. Saddle points do not exist. The behavior is dominated by \strong{branch point singularities} (hence the scaling in $\indexTime$, see \cite{flajolet2009analytic, besse2021discrete}) on $\unitCircle\smallsetminus \reals$---occurring in complex-conjugate pairs, thus the oscillating behavior.
        \item \strong{Transition} zone. Saddle points on $\unitCircle\smallsetminus \reals$ are present and dominate as complex-conjugate pairs, thus providing the oscillating behavior\footnote{Not simply Gaussians as in the non-degenerate steepest descent theory for one saddle point.}. The diffusive scaling in $\indexTime$ comes from the fact that these saddle points are non-degenerate.
        \item \strong{Front} zone. Saddle points are $\pm 1$. 
        The dispersive scaling in $\indexTime$ and the modulation by the Airy function come from the dispersive character of the bulk scheme around these saddle points.
        Otherwise said, the saddle points are degenerate.
        \item \strong{Ahead-of-the-front} zone. The saddle points which can be crossed by contour deformation are in $\neighborhoodInfinity$. 
        However, the function $\textnormal{Re}(f)$ at these saddle points belongs to $\unitDisk$, yielding geometrical damping in $\indexTime$.
    \end{enumerate}
\end{remark}

\begin{remark}[Compatibility of our asymptotics with strong stability]
    Note that the asymptotics $\lVert
        \globalTruncationError^{\indexTime}
        \rVert_2
        =
        C
        +\bigO{\indexTime^{-3/2}}$ for the norm of the solution in the bulk, and $\globalTruncationError_0^{\indexTime}=\bigO{\indexTime^{-3/2}}$ for the trace terms in the limit $\indexTime\gg 1$ are compatible with strong (GKS) stability---and even semi-group stability, see \cite[Theorem 1]{coulombel2015leray}.
\end{remark}

\begin{figure}[h]
    \centering
    \includegraphics[width=1\textwidth]{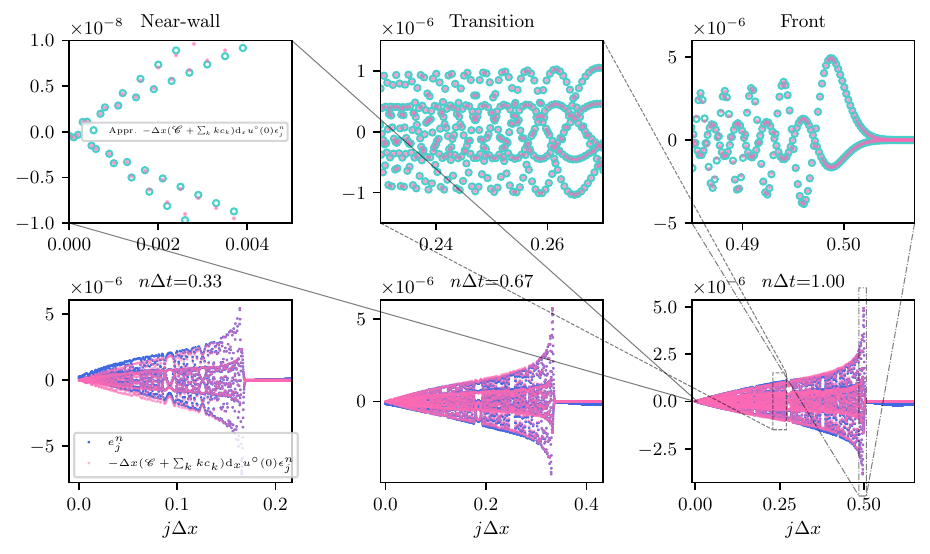}
    \caption{Results for the scheme in \Cref{ex:upwind} and \eqref{eq:laxFriedrichsInitialization}. Bottom: comparison between $\globalTruncationErrorNotNormalized_{\indexSpace}^{\indexTime}$ and $\globalTruncationError_{\indexSpace}^{\indexTime}$ (renormalized).
    Top: zoom on certain areas for $\globalTruncationError_{\indexSpace}^{\indexTime}$, compared to the approximations given by the truncated right-hand sides of \eqref{eq:nearWallStable} (near-wall zone), \eqref{eq:expansionBeyndShock_upwindScheme} (transition zone), and \eqref{eq:approximationFrontShockByAiry} (front zone).}
    \label{fig:my_label}
\end{figure}

We continue by qualitatively comparing $\globalTruncationErrorNotNormalized_{\indexSpace}^{\indexTime}$ and $\globalTruncationError_{\indexSpace}^{\indexTime}$, and check the approximations by \Cref{prop:nearWallStable}, \ref{prop:transitionZoneStable}, and \ref{prop:frontZoneStable}---recapitulated in \Cref{property:stable}---for the latter.
As an illustration, consider the boundary scheme in \Cref{ex:upwind}.
In order to match with an actual time-space corner scheme and an initial-time/space-bulk scheme fulfilling \eqref{eq:orderConstraints}, we consider
\begin{equation}\label{eq:laxFriedrichsInitialization}
    \coefficientCornerScheme_0 = \tfrac{1}{2}(1+\courantNumber), \quad 
    \coefficientCornerScheme_1 = \tfrac{1}{2}(1-\courantNumber) \qquad
    \text{and}\qquad
    \coefficientInitialBulkScheme_{-1} = \tfrac{1}{2}(1+\courantNumber), \quad 
    \coefficientInitialBulkScheme_{1} = \tfrac{1}{2}(1-\courantNumber),
\end{equation}
thus the initial-time/space-bulk scheme is the Lax-Friedrichs scheme.
With this, we perform simulations on the bounded domain $[0, 1]$ paved with $10^4$ discrete points.
The Courant number is $\courantNumber = -\tfrac{1}{2}$ and the initial datum reads $\solutionCauchyProblem^{\initial}(\spaceVariable) = \textnormal{exp}(-50 (\spaceVariable-\tfrac{1}{10})^2)$, so that $\frac{\differential{\solutionCauchyProblem^{\initial}}}{\differential{\spaceVariable}}(0) = 10 e^{-1/2}\neq 0$.
The global truncation error $\globalTruncationErrorNotNormalized_{\indexSpace}^{\indexTime}$ and the renormalized approximation $\globalTruncationError_{\indexSpace}^{\indexTime}$ within this setting are presented on the bottom row of \Cref{fig:my_label}.
The top row compares, in three different zones, the exact expression of $\globalTruncationError_{\indexSpace}^{\indexTime}$ and the obtained approximations.
On the bottom, we observe that the renormalized $\globalTruncationError_{\indexSpace}^{\indexTime}$ is a good representation of $\globalTruncationErrorNotNormalized_{\indexSpace}^{\indexTime}$, up to the addition of a smooth profile of amplitude $\bigO{\spaceStep^2}$ coming from non-zero smooth initial datum.  
The top row reveals that  the truncated right-hand sides of \eqref{eq:nearWallStable} (near-wall zone), \eqref{eq:expansionBeyndShock_upwindScheme} (transition zone), and \eqref{eq:approximationFrontShockByAiry} (front zone) are effective in describing each zone.

\begin{figure}
    \centering
    \includegraphics[width=1\textwidth]{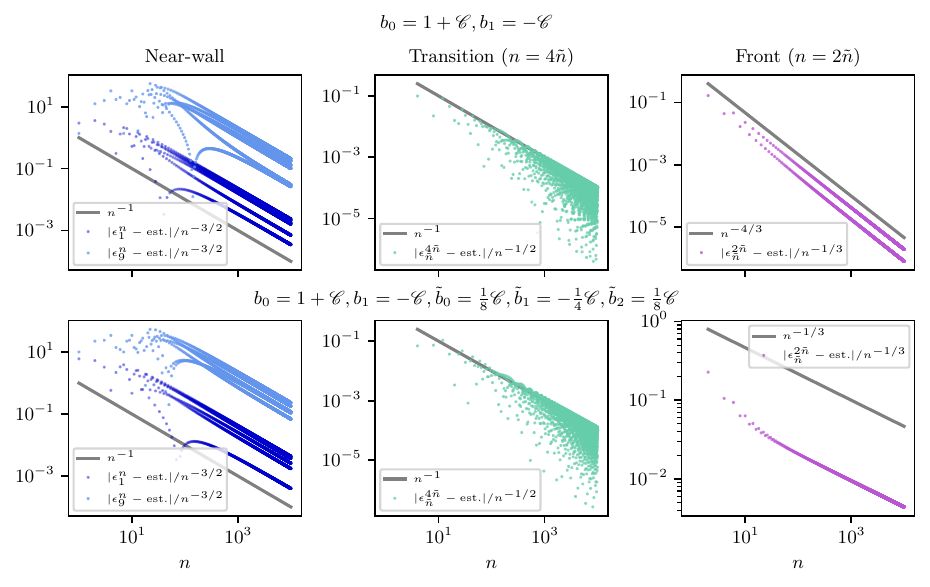}
    \caption{Errors (normalized by the leading asymptotics in $\indexTime$) between the actual values of $\globalTruncationError_{\indexSpace}^{\indexTime}$ and the estimations given by the truncated right-hand sides of \eqref{eq:nearWallUpwind} (near-wall zone, on the second and tenth cells), \eqref{eq:expansionBeyndShock} (transition zone), and \eqref{eq:approximationFrontShockByAiry} (front zone), as time $\indexTime$ advances.
    We considered \Cref{ex:upwind} (top) and \Cref{ex:upwindWithDiffusion} with $\delta = \tfrac{1}{4}$ (bottom), and $\courantNumber = -\tfrac{1}{2}$.}
    \label{fig:errorsStable}
\end{figure}

Quantitative comparisons between the three approximations against the true values of $\globalTruncationError_{\indexSpace}^{\indexTime}$ are conducted in the same setting.
To this end, the near-wall expression is evaluated at the second ($\indexSpace = 1$) and tenth ($\indexSpace = 9$) cells of the domain.
The transition approximation is probed at the point moving at (group) velocity $\tfrac{1}{2}|\courantNumber|= \tfrac{1}{4}$, which belongs to the spatial mesh every four time iterations.
Finally, the front approximation is evaluated at the point moving at (group) velocity $|\courantNumber| = \tfrac{1}{2}$, which belongs to the mesh every two iterations.
Results are shown in \Cref{fig:errorsStable}, where errors are renormalized by the found asymptotic in $\indexTime$.
Near-wall and transition zones yield linear convergence to the leading-order terms, as expected.
For the front zone, the rate of convergence with a multi-step boundary scheme from \Cref{ex:upwindWithDiffusion} (second row) is $\bigO{\indexTime^{-1/3}}$, which is in accordance with \cite[Chapter VII, Equation (4.21)]{wong2001asymptotic}. The supra-convergence in the case of \Cref{ex:upwind} (first row) is likely due to the vanishing of the neglected terms---proportional to $\bigO{\indexTime^{-2/3}}$ times the derivative of the Airy function.

\begin{figure}
    \centering
    \includegraphics[width=1\textwidth]{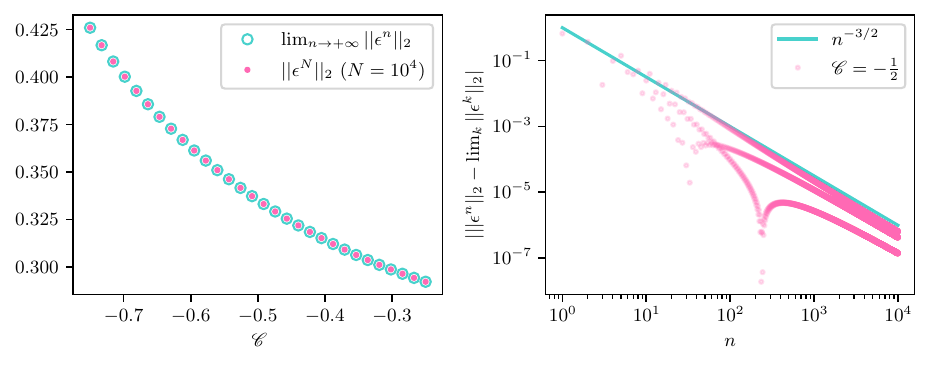}
    \caption{Asymptotic for the $L^2$ norm of $\indexSpace\mapsto\globalTruncationError_{\indexSpace}^{\indexTime}$ with upwind scheme, \confer{} \Cref{ex:upwind}, as boundary scheme. Left: comparison of the final value versus the theoretical estimate \eqref{eq:asymptoticL2Norm} (computed through numerical quadrature) at different Courant numbers. Right: convergence rate for a given Courant number.}
    \label{fig:L2normAsymptotic}
\end{figure}

\begin{figure}
    \centering
    \includegraphics[width=.5\textwidth]{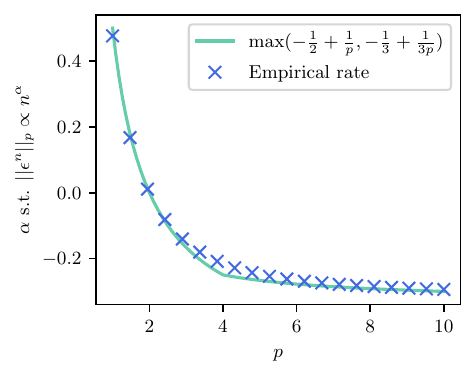}
    \caption{Empirical trend with $\indexTime$ of $\lVert\globalTruncationError^{\indexTime}\rVert_p$ as function of $p$ for \Cref{ex:upwind}.}
    \label{fig:normLP}
\end{figure}

\begin{figure}
    \centering
    \includegraphics[width=1\textwidth]{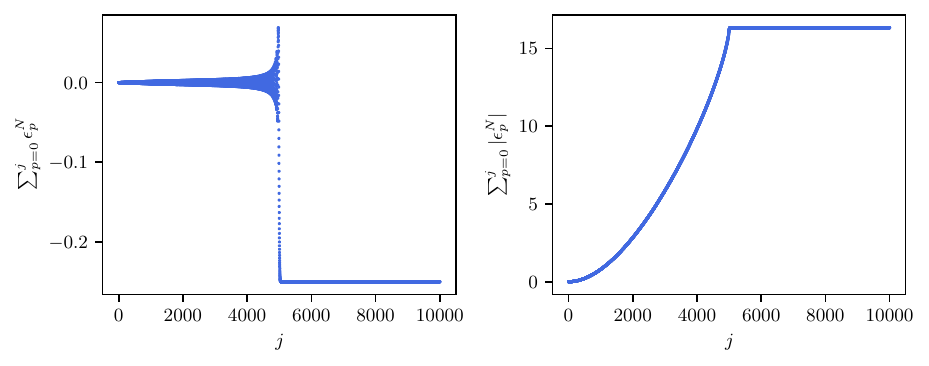}
    \caption{Partial sum of $\indexSpace\mapsto\globalTruncationError_{\indexSpace}^{N}$ (left) and of $\indexSpace\mapsto|\globalTruncationError_{\indexSpace}^{N}|$ (right) for $N= 10^4$ for \Cref{ex:upwind}.}
    \label{fig:cumulativeSum}
\end{figure}

Let us try to conclude on the trend of
\begin{equation*}
    \lVert \globalTruncationErrorNotNormalized^{\finalTime/\timeStep}\rVert_{\ell^p(\spaceStep\naturals)}
    \qquad \textnormal{and} \qquad 
    \sup_{0\leq \indexTime\leq \finalTime/\timeStep}\lVert \globalTruncationErrorNotNormalized^{\indexTime}\rVert_{\ell^p(\spaceStep\naturals)}
\end{equation*}
in $\spaceStep$, where $\finalTime>0$ is a fixed final time for the simulation, at which we would like to approximate the exact solution.
This was one of the questions originally stimulating this work.
Notice that we have, if $L^p$ stability holds true:
\begin{equation*}
    \lVert \globalTruncationErrorNotNormalized^{\indexTime}\rVert_{\ell^p(\spaceStep\naturals)} 
    \simeq
    \spaceStep^{1+\frac{1}{p}}
    \lVert \globalTruncationError^{\indexTime}\rVert_{p} 
    \qquad 
    \textnormal{at leading order (in }\spaceStep).
\end{equation*}
We try to extract the behavior of $\lVert \globalTruncationError^{\indexTime}\rVert_{p}$ for large $\indexTime$ from the previous discussion.
For $p = 2$, we have seen that $\lVert \globalTruncationError^{\indexTime}\rVert_{p}$ tends to a constant at a rate $\bigO{\indexTime^{-3/2}}$.
This fact and the found value of the constant are empirically confirmed, see \Cref{fig:L2normAsymptotic}.
Going on, we obtain
\begin{equation*}
    \lVert \globalTruncationErrorNotNormalized^{\finalTime/\timeStep}\rVert_{\ell^2(\spaceStep\naturals)}
    \simeq\spaceStep^{3/2}
    \qquad \textnormal{and} \qquad 
    \sup_{0\leq \indexTime\leq \finalTime/\timeStep}\lVert \globalTruncationErrorNotNormalized^{\indexTime}\rVert_{\ell^2(\spaceStep\naturals)}
    \simeq\spaceStep^{3/2},
\end{equation*}
which agree with the observations in \cite{bellotti2025consistency}.
For $p=\infty$, we have $ \lVert \globalTruncationError^{\indexTime}\rVert_{\infty} \sim \indexTime^{-1/3}$for large $\indexTime$, hence
\begin{equation*}
    \lVert \globalTruncationErrorNotNormalized^{\finalTime/\timeStep}\rVert_{\ell^{\infty}(\spaceStep\naturals)}
    \simeq\spaceStep^{4/3}
    \qquad \textnormal{and} \qquad 
    \sup_{0\leq \indexTime\leq \finalTime/\timeStep}\lVert \globalTruncationErrorNotNormalized^{\indexTime}\rVert_{\ell^{\infty}(\spaceStep\naturals)}
    \simeq\spaceStep.
\end{equation*}
In order to study other values of $p$, we assume that the behavior of $\globalTruncationError^{\indexTime}_{\indexSpace}$ is essentially that of a piecewise constant function of order $\bigO{\indexTime^{-1/2}}$ for $\spaceVariable\in(0, |\courantNumber|\indexTime - \beta \indexTime^{1/3})$ and $\bigO{\indexTime^{-1/3}}$ for $\spaceVariable\in(|\courantNumber|\indexTime - \beta \indexTime^{1/3}, |\courantNumber|\indexTime + \beta \indexTime^{1/3})$ for some $\beta>0$.
This guess is inspired by \cite{serdyukova1971oscillations} and \cite[Theorem 1.2]{randles2015convolution}.
This suggests that, forgetting any constants, we have 
\begin{equation*}
    \lVert \globalTruncationError^{\indexTime}\rVert_{p} \propto
    \Bigl ( (\indexTime - \indexTime^{1/3})\indexTime^{-p/2} + \indexTime^{1/3}\indexTime^{-p/3}\Bigr )^{1/p} \propto \indexTime^{\max(\frac{1}{p}-\frac{1}{2}, \frac{1}{3p}-\frac{1}{3})}.
\end{equation*}
In particular, this yields $\lVert \globalTruncationError^{\indexTime}\rVert_{1} \propto \indexTime^{1/2}$.
The two arguments in the maximum at the exponent are equal when $p = 4$, thus we have two regimes:
\begin{itemize}
    \item $p<4$, the dominant term is the first one, linked to the \strong{transition zone}.
    \item $p>4$, the dominant term is the second one, linked to the \strong{front zone}.
\end{itemize}
The estimate gains accuracy as $p$ departs from $4$, while close to this value the magnitude of each term must be carefully studied.
This can be observed in \Cref{fig:normLP}, where simulations are conducted under the same conditions as before.
In the case $p=1$, the results of \Cref{fig:cumulativeSum} show that the transition zone plays a crucial role in the asymptotics of the norm, which is far from being the case for $\indexTime\mapsto \sum_{\indexSpace} \globalTruncationError_{\indexSpace}^{\indexTime}$ (\confer{}, \Cref{prop:momentsInSpaceStable}).
This is due to the fact that $\globalTruncationError_{\indexSpace}^{\indexTime}$ strongly oscillates and changes sign in $\indexSpace$, as it is far, even after normalization, from being the probability mass function of some discrete random variable.
Back to the general setting, we obtain
\begin{equation*}
    \lVert \globalTruncationErrorNotNormalized^{\finalTime/\timeStep}\rVert_{\ell^p(\spaceStep\naturals)}
    \propto
    C_p(\finalTime)
    \spaceStep^{1+\frac{1}{p} - \max(\frac{1}{p}-\frac{1}{2}, \frac{1}{3p}-\frac{1}{3})},
\end{equation*}
On the other hand, we have
\begin{equation*}
    \sup_{0\leq \indexTime\leq \finalTime/\timeStep}\lVert \globalTruncationErrorNotNormalized^{\indexTime}\rVert_{\ell^p(\spaceStep\naturals)} 
    \simeq
    \spaceStep^{1+\frac{1}{p}}
    \sup_{0\leq \indexTime\leq \finalTime/\timeStep}
    \lVert \globalTruncationError^{\indexTime}\rVert_{p} 
    \propto 
    \begin{cases}
        \spaceStep^{\frac{3}{2}}, \qquad &p\leq 2, \\
        \spaceStep^{1+\frac{1}{p}}, \qquad &p>2.
    \end{cases}
\end{equation*}

\subsubsection{Unstable boundary conditions}

The following assumption describes boundary conditions with instabilities of very mild severity.
\begin{assumption}[Unstable boundary conditions]\label{ass:unstableBC}
    Let $\stableRoot(\timeShiftOperator)$ be the root of $\timeShiftOperator^2 - \courantNumber\timeShiftOperator(\stableRoot(\timeShiftOperator)^{-1}-\stableRoot(\timeShiftOperator)) - 1=0$ such that $\stableRoot(\timeShiftOperator)\in\unitDisk$ for $\timeShiftOperator\in\neighborhoodInfinity$.
    Assume that the function
    \begin{equation*}
        \timeShiftOperator \mapsto \timeShiftOperator^2 - \timeShiftOperator \sum_{k\in\naturals}\coefficientEventualBoundaryScheme_k \stableRoot(\timeShiftOperator)^{k} - \sum_{k\in\naturals}\coefficientEventualBoundarySchemeOld_k \stableRoot(\timeShiftOperator)^k
        \quad \text{\strong{has} }\timeShiftOperator=-1\text{ \strong{as only zero} in }\closedNeighborhoodInfinity\text{ and its \strong{multiplicity is one}}.
    \end{equation*}
\end{assumption}
\begin{remark}[On \Cref{ass:unstableBC}]
    We can provide analogous results when the zero is (also) $\timeShiftOperator = 1$. 
    Complex zeros on $\unitCircle$ could also be considered, in which case they must appear in complex conjugate pairs, see \Cref{rem:anyGroupVelocity}. 
    
    Considering zeros on $\unitCircle$ of higher multiplicity, e.g. \cite{bellotti2025consistency} or \cite[Equation (11.2.c)]{strikwerda2004finite}, encompasses more serious (but still polynomial) instabilities and can be treated by similar tools.
    If one considers the possibility of zeros in $\neighborhoodInfinity$, this leads to severe (exponential) boundary-localized instabilities known as of \strong{Godunov-Ryabenkii} type, see \cite{trefethen1984instability}.
    
    The assumption of a simple zero in $\timeShiftOperator=-1$ automatically implies that
    \begin{equation}\label{eq:simpleZeroAtMinusOne}
        1+\sum_{k\geq 0}\coefficientEventualBoundaryScheme_k - \sum_{k\geq 0}\coefficientEventualBoundarySchemeOld_k = 0
        \qquad \text{and}\qquad 
        2 + \sum_{k\geq 0}\coefficientEventualBoundaryScheme_k + \frac{1}{\courantNumber}\sum_{k\geq 1}k(\coefficientEventualBoundaryScheme_k-\coefficientEventualBoundarySchemeOld_k)
        \neq 0.
    \end{equation}
\end{remark}

\begin{example}[An upwind leap-frog scheme]\label{ex:unstable}
    One example that we consider, fulfilling \Cref{ass:unstableBC}, is $\coefficientEventualBoundaryScheme_0 = - \coefficientEventualBoundaryScheme_1 = 1+2\courantNumber$ and $\coefficientEventualBoundarySchemeOld_1 = 1$ and all other coefficients equal to zero.
    This scheme satisfies the consistency conditions \eqref{eq:orderConstraints} (and is also second-order accurate).
    Although this scheme can be obtained from a lattice Boltzmann scheme as detailed in \cite{bellotti2025consistency}, it is already well-known in the literature as bulk scheme, see \cite{iserles1986generalized, thomas1993development}.
    This scheme shares similarities with the standard leap-frog scheme.
    Indeed, it is second-order accurate and non-dissipative, meaning that both symbols of the scheme have constant modulus equal to one regardless of the frequency.
\end{example}

\begin{example}[Inconsistent scheme coming from lattice Boltzmann]
    Another example meeting \Cref{ass:unstableBC} comes from lattice Boltzmann schemes, see \cite[Chapter 12]{bellotti:tel-04266822}, with the ``anti-bounce-back'' condition, yielding $\coefficientEventualBoundaryScheme_0 = \courantNumber$, $\coefficientEventualBoundaryScheme_1 = -\courantNumber$, and $\coefficientEventualBoundarySchemeOld_0 = 1$.
    This scheme does not satisfy the consistency conditions \eqref{eq:orderConstraints}: it is consistent with $\partial_{\timeVariable}\solutionCauchyProblem + \tfrac{1}{2}\advectionVelocity\partial_{\spaceVariable}\solutionCauchyProblem = 0$.
\end{example}

\begin{informalresult}[Structure of $\globalTruncationError_{\indexSpace}^{\indexTime}$ for $\indexTime\gg 1$: unstable boundary conditions]\label{property:unstable}
Let $\globalTruncationError_{\indexSpace}^{\indexTime}$ be the solution of \eqref{eq:zeroInitialError}--\eqref{eq:schemeErrorInitial}--\eqref{eq:schemeErrorEventual}, and \Cref{ass:stableBulk} and \ref{ass:unstableBC} be fulfilled. As for the stable boundary conditions case detailed in \Cref{property:stable}, for $\indexTime\gg 1$, $\globalTruncationError_{\indexSpace}^{\indexTime}$ features four zones according the value of $\indexSpace$, given as follows.
    \begin{enumerate}
        \item A \strong{near-wall} zone at $\indexSpace/\indexTime \sim 0$, where $\globalTruncationError_{\indexSpace}^{\indexTime} = (-1)^\indexTime R + \bigO{\indexTime^{-3/2}}$, where the $\bigO{\indexTime^{-3/2}}$-profile is as in \Cref{property:stable}. The detailed claim is \Cref{prop:nearWallUnstable}.
        \item A \strong{transition} zone for $\indexSpace/\indexTime\sim \nu$ with $\nu \in (0, |\courantNumber|)$, where $\globalTruncationError_{\indexSpace}^{\indexTime} = (-1)^\indexTime R + \bigO{\indexTime^{-1/2}}$,  where the $\bigO{\indexTime^{-1/2}}$-profile is as in \Cref{property:stable}.
        The detailed claim is \Cref{prop:transitionUnstable}.
        \item A \strong{front} zone for $\indexSpace/\indexTime\sim |\courantNumber|$, where $\globalTruncationError_{\indexSpace}^{\indexTime} = (-1)^\indexTime R \mathcal{M}_{\indexSpace}^{\indexTime} + \bigO{\indexTime^{-1/3}}$, where the profile $ \mathcal{M}_{\indexSpace}^{\indexTime}$ features the \strong{primitive of the Airy function}.
        The detailed claim is \Cref{prop:FrontUnstable}.
        \item A zone \strong{ahead-of-the-front} for $\indexSpace/\indexTime\sim \nu$ with $\nu \in (|\courantNumber|, 1]$, where $\globalTruncationError_{\indexSpace}^{\indexTime}$ exponentially goes to zero with $\indexTime$.
    \end{enumerate}
    The value $R$ depends on the coefficients of the boundary scheme and it is given by
    \begin{equation*}
        R = -
        \Bigl (2
            + \sum_{k\geq 0}
            \coefficientEventualBoundaryScheme_k
            + \frac{1}{\courantNumber} \sum_{k\geq 1}
            k (\coefficientEventualBoundaryScheme_k-\coefficientEventualBoundarySchemeOld_k)\Bigr )^{-1},
            \qquad \text{which is well-defined by virtue of \eqref{eq:simpleZeroAtMinusOne}.}
    \end{equation*}
\end{informalresult}

\begin{figure}[h]
    \centering
    \includegraphics[width=1\textwidth]{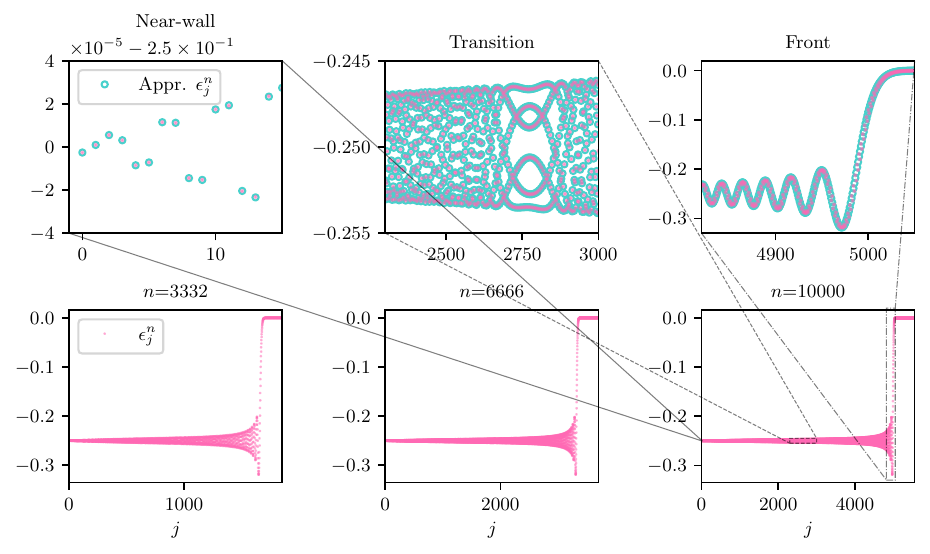}
    \caption{Results for the scheme in \Cref{ex:unstable}. 
    Bottom: values of $\globalTruncationError_{\indexSpace}^{\indexTime}$ at different time steps.
    Top: zoom on certain areas for $\globalTruncationError_{\indexSpace}^{\indexTime}$, compared to the approximations given by the truncated right-hand sides of \eqref{eq:nearWallUnstable} (near-wall zone), \eqref{eq:unstable_error_transition} (transition zone), and \eqref{eq:frontUnstable} (front zone).}
    \label{fig:unstableComparison}
\end{figure}

Under the same conditions as \Cref{sec:stableBC}, numerical results compared to the approximations are displayed in \Cref{fig:unstableComparison}, and show remarkable agreement.

\subsection{A dissipative bulk scheme with stable boundary conditions}\label{sec:dissipative}

Let now, contrarily to \Cref{sec:leapFrog}, $\courantNumber$ be positive, so that the boundary is an \strong{inflow}, and consider the two-indices sequence problem
\begin{align}
    &\globalTruncationError_{\indexSpace}^0 = 0, \qquad \indexSpace\in\naturals, \label{eq:zeroInitialErrorDissipative} \\
    &\globalTruncationError_0^1 = 1, \qquad \globalTruncationError_{\indexSpace}^1 = 0, \quad \indexSpace \geq 1, \label{eq:schemeErrorInitialDissipative} \\
    \indexTime\geq 1 \qquad & \globalTruncationError_0^{\indexTime + 1} = \sum_{k\in\naturals}\coefficientEventualBoundaryScheme_k \globalTruncationError_k^{\indexTime} + \sum_{k\in\naturals}\coefficientEventualBoundarySchemeOld_k \globalTruncationError_k^{\indexTime-1},\label{eq:schemeErrorEventualBoundaryDissipative}\\
    &\globalTruncationError_{\indexSpace}^{\indexTime + 1} = 
    \tfrac{1}{2}(2-\relaxationParameter)(\globalTruncationError_{\indexSpace-1}^{\indexTime} + \globalTruncationError_{\indexSpace+1}^{\indexTime})
    +
    (\relaxationParameter-1)\globalTruncationError_{\indexSpace}^{\indexTime - 1} +\tfrac{1}{2}\relaxationParameter\courantNumber (\globalTruncationError_{\indexSpace-1}^{\indexTime} - \globalTruncationError_{\indexSpace+1}^{\indexTime}), \quad \indexSpace\geq 1,\label{eq:schemeErrorEventualDissipative}
\end{align}
with the parameter $\relaxationParameter\in (0, 2)$.
Note that the bulk scheme \eqref{eq:schemeErrorEventualDissipative} can be see as a combination between a Lax-Friedrichs scheme ($\relaxationParameter = 1$) and a leap-frog scheme ($\relaxationParameter = 2$).

This scheme is---in its boundary-less version---``locally'' dissipative (of order two), in the sense that its symbols lay in $\unitDisk$, except at a finite number of frequencies where they belong to $\unitCircle$, and at these points, the symbols meet the assumptions of \cite[Theorem 1]{thomee1965stability}.
We will return to this point in \Cref{rem:linkwithThomee}.

\begin{assumption}[Stable boundary conditions]\label{ass:stableBCDissipative}
    Let $\stableRoot(\timeShiftOperator)$ be the root of $\timeShiftOperator^2 + \tfrac{1}{2}(\relaxationParameter-2)\timeShiftOperator (\stableRoot(\timeShiftOperator)^{-1}+\stableRoot(\timeShiftOperator))- \tfrac{1}{2}\relaxationParameter\courantNumber\timeShiftOperator(\stableRoot(\timeShiftOperator)^{-1}-\stableRoot(\timeShiftOperator)) + (1-\relaxationParameter)=0$ such that $\stableRoot(\timeShiftOperator)\in\unitDisk$ for $\timeShiftOperator\in\neighborhoodInfinity$.
    Assume that the function
    \begin{equation*}
        \timeShiftOperator \mapsto \timeShiftOperator^2 - \timeShiftOperator \sum_{k\in\naturals}\coefficientEventualBoundaryScheme_k \stableRoot(\timeShiftOperator)^{k} - \sum_{k\in\naturals}\coefficientEventualBoundarySchemeOld_k \stableRoot(\timeShiftOperator)^k
        \quad \text{\strong{does not have any zero} in }\closedNeighborhoodInfinity.
    \end{equation*}
\end{assumption}

\begin{informalresult}[Structure of $\globalTruncationError_{\indexSpace}^{\indexTime}$ for $\indexTime\gg 1$: stable boundary conditions]\label{property:Gaussian}
    Let $\globalTruncationError_{\indexSpace}^{\indexTime}$ be the solution of \eqref{eq:zeroInitialErrorDissipative}--\eqref{eq:schemeErrorInitialDissipative}--\eqref{eq:schemeErrorEventualBoundaryDissipative}--\eqref{eq:schemeErrorEventualDissipative}, $\relaxationParameter\in (0, 2)$ and $0<\courantNumber\leq 1$, and \Cref{ass:stableBCDissipative} be fulfilled.
    For $\indexTime\gg 1$, $\globalTruncationError_{\indexSpace}^{\indexTime}$ is solely significant for $\indexSpace/\indexTime\sim \courantNumber$, where it features a grid-scale oscillating profile times a Gaussian profile.
    The magnitude of the Gaussian profile is of order $(\indexTime ( \frac{1}{\relaxationParameter}-\frac{1}{2} )(1-\courantNumber^2))^{-1/2}$ and its standard deviation equal to $(\indexTime ( \frac{1}{\relaxationParameter}-\frac{1}{2} )(1-\courantNumber^2))^{1/2}$.
    The detailed claim is \Cref{prop:GaussianPeak}.
\end{informalresult}

\begin{remark}[Unstable boundary conditions or excited saddle points]
    Consider the case where \Cref{ass:stableBCDissipative} is replaced by the assumption that the boundary condition is unstable, with the only zero of the associated function in $\closedNeighborhoodInfinity$ being a simple zero at $\timeShiftOperator = 1$ (or/and at $\timeShiftOperator=-1$), or alternatively where \eqref{eq:schemeErrorEventualBoundaryDissipative} is replaced by $\globalTruncationError_{0}^{\indexTime + 1}= 1$ for $\indexTime\geq 1$.
    In this setting, the Gaussian profile in \Cref{property:Gaussian} is supplanted by the \strong{complementary error function} (roughly speaking, the integral of the Gaussian), see for example \cite[Chapter VII, Section 2]{wong2001asymptotic}.
    This is analogous to what occurs when passing from \Cref{property:stable} to \Cref{property:unstable}, where the Airy function in the front zone approximation is replaced by the \strong{primitive of the Airy function}, and shall not be discussed further.
\end{remark}

\begin{figure}
    \centering
    \includegraphics[width=1\textwidth]{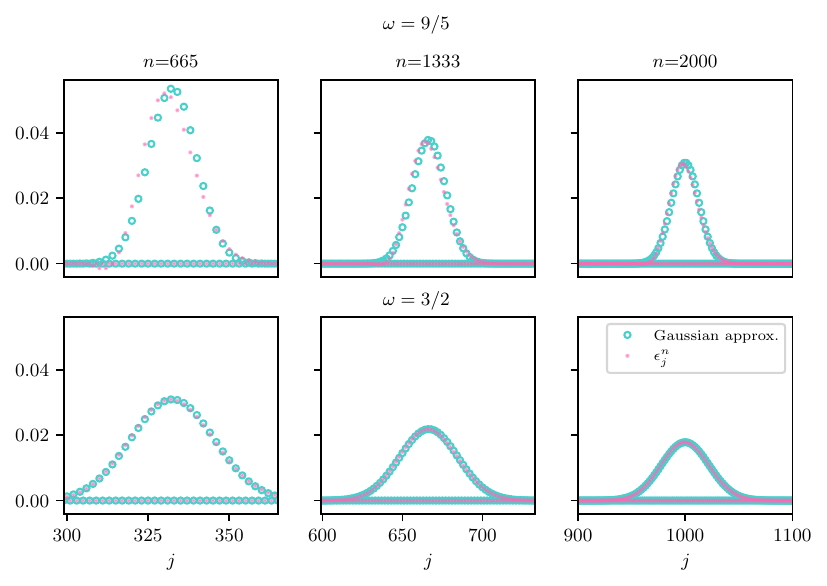}
    \caption{Results and comparison with the truncated right-hand side of \eqref{eq:gaussian} with $\relaxationParameter = \tfrac{9}{5}$ and $\relaxationParameter = \tfrac{3}{2}$ at $\courantNumber = \tfrac{1}{2}$ using the boundary conditions by \Cref{ex:Dirichlet}.}
    \label{fig:comparisonGaussianWithEstimates}
\end{figure}

Through the simulations presented in \Cref{fig:comparisonGaussianWithEstimates}, conducted with $\courantNumber=\tfrac{1}{2}$ and using the boundary scheme in \Cref{ex:Dirichlet}, we see that $\globalTruncationError_{\indexSpace}^{\indexTime}$ behaves closer and closer to the Gaussian profile in \eqref{eq:gaussian}.
This is true to a lesser extent when $\relaxationParameter$ is close to two, since the diffusion term is small (the saddle points are close to be degenerate), and dispersive effects, quite visible in the first snapshot of the top row, are still significant in the considered range of $\indexTime$.

\begin{example}[An analogous situation with an explicit (binomial) formula]
    Let us consider the problem with Dirichlet boundary condition and a manufactured bulk scheme:
    \begin{align*}
        &\globalTruncationError_{\indexSpace}^0 = 0, \qquad \indexSpace\in\naturals, \\
        &\globalTruncationError_{0}^1 = 1, \qquad \globalTruncationError_{\indexSpace}^1 = 0, \quad \indexSpace\geq 1, \\
        \indexTime\geq 1 
        \qquad 
        &\globalTruncationError_{0}^{\indexTime + 1} = 0, 
        \qquad 
        \globalTruncationError_{\indexSpace}^{\indexTime + 1}
        =
        \globalTruncationError_{\indexSpace}^{\indexTime - 1}
        +\courantNumber(\globalTruncationError_{\indexSpace-1}^{\indexTime}-\globalTruncationError_{\indexSpace+1}^{\indexTime})
        +\courantNumber(1-\courantNumber) (\globalTruncationError_{\indexSpace-1}^{\indexTime-1}-2\globalTruncationError_{\indexSpace}^{\indexTime-1}+\globalTruncationError_{\indexSpace+1}^{\indexTime-1}), \quad 
        \indexSpace\geq 1.
    \end{align*}
    The bulk scheme is a leap-frog scheme with dissipation ``in the past'', and has been devised based on the symbol of the upwind scheme and the negative of its conjugate.
    These symbols are essentially analogous to those of \eqref{eq:schemeErrorEventualDissipative} in terms of tangency properties to $\unitCircle$.
    By the techniques presented in \Cref{sec:detailedResults}, the residue theorem easily entails that for $\indexTime\geq 2$ and $\indexSpace\geq 1$, we have 
    \begin{equation*}
        \globalTruncationError_{\indexSpace}^{\indexTime}
        =
        \courantNumber
        \times
        \underbrace{
        \courantNumber^{\indexSpace-1}(1-\courantNumber)^{\indexTime-2-(\indexSpace-1)}
        \binom{\indexTime-2}{\indexSpace-1}
        }_{=\mathbb{P}(X=\indexSpace-1)}
        \mathds{1}_{\indexSpace\leq \indexTime-1}.
    \end{equation*}
    Here, $X$ is a discrete random variable distributed as a binomial with $X\sim B(\indexTime-2, \courantNumber)$.
    The fact that this distribution represents a peak moving at group velocity $\courantNumber$ can be grasped from its mode, which equals $\lfloor \courantNumber(\indexTime - 1)\rfloor$ outside well-known exceptional cases.
    Moreover, it is very well-known that for large $\indexTime$, $X\sim B(\indexTime-2, \courantNumber)$ is well approximated by the normal distribution $\mathcal{N}(\indexTime\courantNumber, \indexTime\courantNumber(1-\courantNumber))$.
    This fact yields something analogous to \Cref{property:Gaussian} and \Cref{prop:GaussianPeak} through a different way of reasoning.
\end{example}

\section{Detailed results and proofs}\label{sec:detailedResults}

Before digging into specific claims and their proofs, let us introduce a useful tool to map discrete time variables into the complex plane.
This consists of the $\timeShiftOperator$-transform, defined along with its inverse by 
\begin{equation*}
    \zTransformed{\globalTruncationError}_{\indexSpace}(\timeShiftOperator)\definitionEquality \sum_{\indexTime = 0}^{+\infty}\timeShiftOperator^{-\indexTime}\globalTruncationError_{\indexSpace}^{\indexTime}
    \qquad \text{and}\qquad 
    {\globalTruncationError}_{\indexSpace}^{\indexTime}
    =
    \frac{1}{2\pi i}\oint_C \timeShiftOperator^{\indexTime-1}\zTransformed{\globalTruncationError}_{\indexSpace}(\timeShiftOperator)\differential{\timeShiftOperator},
\end{equation*}
where $C$ is a positively oriented closed path fully contained within the region of convergence (which is $\neighborhoodInfinity$ in the considered cases). 
Remark that it is not enough that $C$ encloses the origin and all the singularities (poles, branch points, etc.) of $\timeShiftOperator^{\indexTime-1}\zTransformed{\globalTruncationError}_{\indexSpace}(\timeShiftOperator)$, since this would allow it to quit the region of convergence, for instance, by path deformation.

Observe that the $\timeShiftOperator$-transform becomes an \strong{ordinary generating function}, see \cite{flajolet2009analytic}, upon formally replacing $\timeShiftOperator$ by $\timeShiftOperator^{-1}$, which maps $\unitCircle$ onto itself, and $\unitDisk$ into $\neighborhoodInfinity$ (and \strong{viceversa}).
This correspondence shall be used multiple times.

\subsection{Leap-frog bulk scheme}\label{sec:proofsLeapFrog}

Using the $\timeShiftOperator$-transform introduced above, \eqref{eq:schemeErrorEventual} can be reinterpreted as a recurrence in space, depending on the complex parameter $\timeShiftOperator$.
The transform of \eqref{eq:schemeErrorEventual} and the use \eqref{eq:zeroInitialError}--\eqref{eq:schemeErrorInitial} give 
\begin{multline}\label{eq:ztransformedProblem}
    \timeShiftOperator^2 \zTransformed{\globalTruncationError}_{0}(\timeShiftOperator) - \timeShiftOperator \sum_{k\in\naturals}\coefficientEventualBoundaryScheme_k \zTransformed{\globalTruncationError}_{k}(\timeShiftOperator) - \sum_{k\in\naturals}\coefficientEventualBoundarySchemeOld_k \zTransformed{\globalTruncationError}_{k}(\timeShiftOperator)= \timeShiftOperator
    \qquad \text{and} \\ 
    (\timeShiftOperator^2 - 1)\zTransformed{\globalTruncationError}_{\indexSpace}(\timeShiftOperator) - \courantNumber\timeShiftOperator\zTransformed{\globalTruncationError}_{\indexSpace-1}(\timeShiftOperator) + \courantNumber\timeShiftOperator\zTransformed{\globalTruncationError}_{\indexSpace+1}(\timeShiftOperator) = 0, \quad \indexSpace\geq 1.
\end{multline}
As any linear recurrence relation, the general solution can be sought solving the associated characteristic equation, obtained replacing $\zTransformed{\globalTruncationError}_{\indexSpace}(\timeShiftOperator)$ by the geometric term $\fourierShift(\timeShiftOperator)^{\indexSpace}$.
Of particular usefulness is the root $\stableRoot$ such that $\stableRoot(\timeShiftOperator)\in\unitDisk$ for $\timeShiftOperator\in\neighborhoodInfinity$, which explicitly reads
\begin{equation*}
    \stableRoot(\timeShiftOperator) = \frac{1}{2\courantNumber\timeShiftOperator}(1-\timeShiftOperator^2 + \sqrt{\timeShiftOperator^4 + 2(2\courantNumber^2-1)\timeShiftOperator^2 + 1}).
\end{equation*}
The $L^2$ stable solution to the bulk equation reads $\zTransformed{\globalTruncationError}_{\indexSpace}(\timeShiftOperator) = \coefficientStableSolution(\timeShiftOperator)\stableRoot(\timeShiftOperator)^{\indexSpace}$ for $\indexSpace\geq 0$, where the coefficient $\coefficientStableSolution(\timeShiftOperator)$ is found by enforcing the boundary condition, \idEst{} the first equation of \eqref{eq:ztransformedProblem}:
\begin{equation*}
    \zTransformed{\globalTruncationError}_{\indexSpace}(\timeShiftOperator) = \displaystyle \frac{\timeShiftOperator \, \stableRoot(\timeShiftOperator)^{\indexSpace}}{\timeShiftOperator^2  - \timeShiftOperator \displaystyle \sum_{k\in\naturals}\coefficientEventualBoundaryScheme_k \stableRoot(\timeShiftOperator)^{k} - \displaystyle \sum_{k\in\naturals}\coefficientEventualBoundarySchemeOld_k \stableRoot(\timeShiftOperator)^{k}}.
\end{equation*}

\begin{lemma}[Branch points of $\zTransformed{\globalTruncationError}_{\indexSpace}(\timeShiftOperator)$]\label{lemma:BranchPoints}
    Let $|\courantNumber|<1$, then the function $\complex \ni \timeShiftOperator\mapsto \zTransformed{\globalTruncationError}_{\indexSpace}(\timeShiftOperator) \in \complex$ has four branch points on $\unitCircle$, corresponding to the zeros of $\timeShiftOperator\mapsto \timeShiftOperator^4 + 2(2\courantNumber^2-1)\timeShiftOperator^2 + 1$, given by 
    \begin{equation*}
        e^{\pm i\branchPointAngle}
        \quad \text{and}\quad
        e^{\pm i(\branchPointAngle - \pi)}, 
        \qquad 
        \text{with}
        \quad
        \branchPointAngle
        \definitionEquality
        \tfrac{1}{2}\arccos(1-2\courantNumber^2)
        \in [0, \tfrac{\pi}{2}).
    \end{equation*}
\end{lemma}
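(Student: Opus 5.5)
The plan is to treat the polynomial $P(\timeShiftOperator)\definitionEquality \timeShiftOperator^4 + 2(2\courantNumber^2-1)\timeShiftOperator^2 + 1$ as a quadratic in $w=\timeShiftOperator^2$, locate its roots explicitly, and then argue that these roots are genuine branch points of $\zTransformed{\globalTruncationError}_{\indexSpace}$. The only non-meromorphic ingredient in $\zTransformed{\globalTruncationError}_{\indexSpace}$ is $\stableRoot$, which depends on $\timeShiftOperator$ through $\sqrt{P(\timeShiftOperator)}$. Hence the candidate branch points are exactly the zeros of $P$ of odd multiplicity.

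First, solve $w^2+2(2\courantNumber^2-1)w+1=0$. This gives $w=1-2\courantNumber^2\pm 2i|\courantNumber|\sqrt{1-\courantNumber^2}$, where we use $|\courantNumber|<1$ so that the discriminant $4(2\courantNumber^2-1)^2-4=-16\courantNumber^2(1-\courantNumber^2)$ is nonpositive. The product of the two roots is $1$ and they are complex conjugate, so $|w|=1$. Writing $w=e^{\pm i\phi}$ with $\cos\phi=1-2\courantNumber^2$, we get $\phi=\arccos(1-2\courantNumber^2)\in[0,\pi)$. Taking square roots yields $\timeShiftOperator=\pm e^{\pm i\phi/2}$, that is $e^{\pm i\branchPointAngle}$ and $-e^{\pm i\branchPointAngle}=e^{\pm i(\branchPointAngle-\pi)}$ with $\branchPointAngle=\phi/2\in[0,\pi/2)$. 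All four lie on $\unitCircle$.

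Second, check that the roots are simple whenever $\courantNumber\neq 0$. In that case $\phi\in(0,\pi)$, so the four points are distinct; since $P$ has degree four, each is a simple zero. Near each such point, $\sqrt{P}$ behaves like a nonzero constant times $(\timeShiftOperator-\timeShiftOperator_0)^{1/2}$, so it changes sign along a small loop. The degenerate case $\courantNumber=0$ has to be excluded: there $P=(\timeShiftOperator^2-1)^2$ has no branching and the scheme is trivial. I would state this case explicitly, since the paper implicitly works under \Cref{ass:stableBulk}.

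The main obstacle is showing that the branching of $\stableRoot$ actually survives in $\zTransformed{\globalTruncationError}_{\indexSpace}$, which is a rational expression in $\stableRoot$, $\timeShiftOperator$, and the boundary coefficients. My approach is to observe that the two roots $\stableRoot$ and $1/\stableRoot$ of the characteristic equation are swapped by continuation around a branch point. Then $\zTransformed{\globalTruncationError}_{\indexSpace}$ would be single-valued near $\timeShiftOperator_0$ only if the expression coincided for $\stableRoot$ and $1/\stableRoot$ identically. At $\timeShiftOperator_0$ the root $\stableRoot$ is double and equal to $\pm i$ or its relevant value on $\unitCircle$. I would expand $\stableRoot=\kappa_0+c(\timeShiftOperator-\timeShiftOperator_0)^{1/2}+\dots$ with $c\neq 0$, and argue generically that the half-integer term in the expansion of $\zTransformed{\globalTruncationError}_{\indexSpace}$ has a nonzero coefficient. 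This holds for $\indexSpace$ large through the factor $\stableRoot^{\indexSpace}$, and otherwise whenever the denominator does not accidentally cancel it. I expect to phrase the lemma's claim as the branch points of the function being contained in this set, with equality in the generic case.
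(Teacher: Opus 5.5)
Your computation of the four points is correct, and it covers what the paper actually uses. The paper gives no argument beyond a reference, and it implicitly takes the branch points of $\zTransformed{\globalTruncationError}_{\indexSpace}$ to be the zeros of $P(z)=z^4+2(2\courantNumber^2-1)z^2+1$. Your steps all check out: the substitution $w=z^2$, the discriminant $-16\courantNumber^2(1-\courantNumber^2)$, $|w|=1$, and $\cos(2\branchPointAngle)=1-2\courantNumber^2$ (equivalently $\sin\branchPointAngle=|\courantNumber|$, which the paper uses later). Excluding $\courantNumber=0$ is also warranted: $\stableRoot$ is then undefined, even though the stated range $[0,\tfrac{\pi}{2})$ allows $\branchPointAngle=0$.

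One slip: the second root of $\courantNumber z\kappa^2+(z^2-1)\kappa-\courantNumber z=0$ is $-1/\stableRoot$, not $1/\stableRoot$, because the product of the roots is $-1$. This is why the double root at the branch points is $\pm i$, as you correctly state.

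Your doubt about the branching surviving is justified. In fact the lemma as literally stated fails in a case the paper itself considers. For Dirichlet conditions (\Cref{ex:Dirichlet}) with $j=0$, one has $\zTransformed{\globalTruncationError}_0(z)=z^{-1}$ (indeed $\globalTruncationError_0^n=\delta_{n1}$), which has no branch point at all. So your hedged formulation is the right one, and it is all the later singularity analysis needs.

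You can, however, replace ``generic'' by an exact dichotomy. Since $\zTransformed{\globalTruncationError}_{\indexSpace}$ is rational in $z$ and $\sqrt{P(z)}$, reducing with $(\sqrt{P})^2=P$ gives $\zTransformed{\globalTruncationError}_{\indexSpace}=A(z)+B(z)\sqrt{P(z)}$ with $A,B$ rational.
\begin{itemize}
\item If $B\equiv 0$, equivalently the expression is invariant under $\stableRoot\mapsto-1/\stableRoot$, the function is rational and has no branch points.
\item Otherwise $B$ has a finite order $m$ at each simple zero $z_0$ of $P$. Then $B\sqrt{P}$ behaves like a nonzero constant times $(z-z_0)^{m+1/2}$, so all four points are branch points at once.
\end{itemize}
There are no other branch points: $P(0)=1$, and $P$ has even degree, so neither $0$ nor $\infty$ is one.

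To certify $B\not\equiv 0$, your first-order expansion at a single point suffices. Write $s=(z-z_0)^{1/2}$, $\stableRoot=\kappa_0+cs+O(s^2)$ with $c\neq 0$, and $D_0\definitionEquality z_0^2-z_0\sum_k b_k\kappa_0^k-\sum_k\tilde b_k\kappa_0^k$. If $D_0\neq 0$ (for instance under \Cref{ass:stableBC}), the coefficient of $s$ in $\zTransformed{\globalTruncationError}_{\indexSpace}$ is a nonzero multiple of
\[
jD_0+z_0\sum_{k} k\, b_k\kappa_0^k+\sum_{k} k\,\tilde b_k\kappa_0^k ,
\]
which vanishes for at most one $j$. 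At $z_0=e^{-i\branchPointAngle}$ this quantity equals $z_0^2\bigl(1-\mathscr{G}_+^0(\branchPointAngle)\bigr)\bigl(j+\mathscr{G}_+^1(\branchPointAngle)/(1-\mathscr{G}_+^0(\branchPointAngle))\bigr)$. That is exactly the factor multiplying $n^{-3/2}$ in \Cref{prop:nearWallStable}. For Dirichlet it reduces to $jz_0^2$, which recovers the exceptional case $j=0$.
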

The proof of the previous lemma can be found in \cite{bellotti2025perfectly}.
Using the inverse $\timeShiftOperator$-transform and simple algebraic manipulations, we obtain 
\begin{align}
        {\globalTruncationError}_{\indexSpace}^{\indexTime}
        =
        \oint_C 
        g(\timeShiftOperator)
        {\timeShiftOperator^{\indexTime}\stableRoot(\timeShiftOperator)^{\indexSpace}}
        \differential{\timeShiftOperator}
        = &\oint_C g(\timeShiftOperator)e^{\indexTime f(\timeShiftOperator; \nu)}\differential{\timeShiftOperator}, \label{eq:definition_Error_LeapFrog}
        \\
        &\text{where}\quad
        \begin{cases}
            g(\timeShiftOperator)
            \definitionEquality
            (2\pi i)^{-1}
            \Bigl ( \timeShiftOperator^2  - \timeShiftOperator \displaystyle \sum_{k\in\naturals}\coefficientEventualBoundaryScheme_k \stableRoot(\timeShiftOperator)^{k} - \displaystyle \sum_{k\in\naturals}\coefficientEventualBoundarySchemeOld_k \stableRoot(\timeShiftOperator)^{k}\Bigr )^{-1}, \\
            f(\timeShiftOperator; \nu) \definitionEquality \log(\timeShiftOperator) + \nu \log(\stableRoot(\timeShiftOperator)),
        \end{cases}\label{eq:definitionFandG}
\end{align}
and $\nu = \indexSpace/\indexTime$ is a placeholder for the ratio between space and time index.
In all the rest of the paper, we use the principal determination of the logarithm and the argument.
We see in the remark below that $\nu$ can be considered as a \strong{group velocity} associated to modes which are saddle points.

\begin{remark}[On the link between group velocity and saddle points]
    Let $\nu\geq 0$ be given.
    In a steepest descent approximation \eqref{eq:definition_Error_LeapFrog} for $\indexTime\gg 1$, we look for the points $\saddlePoint(\nu)\in\complex$ such that $f'(\saddlePoint(\nu); \nu) = 0$.
    This results in
    \begin{equation*}
        \nu = -\frac{\stableRoot(\saddlePoint(\nu))}{\saddlePoint(\nu) \stableRoot'(\saddlePoint(\nu))},
    \end{equation*}
   whose right-hand side is nothing but the definition of group velocity for a mode $\saddlePoint(\nu)\in\unitCircle$ according to \cite[Equation (3.18)]{trefethen1984instability}.
   If a saddle point with group velocity $\nu = 0$ exists \cite{trefethen1982group}, such mode is said to be ``\strong{glancing}'' \cite{coulombel2015fully}.
\end{remark}
\begin{remark}[On the link between group velocity and diagonals in multivariate analytic combinatorics]
    If we also take the $\timeShiftOperator$-transform of $\globalTruncationError_{\indexSpace}^{\indexTime}$ in space, we obtain $\doubleZTransformed{\globalTruncationError}(\timeShiftOperator, \fourierShift)$, which reads (see \eqref{eq:doubleGeneratingFunction}):
    \begin{multline*}
    \doubleZTransformed{\globalTruncationError}(\timeShiftOperator^{-1}, \fourierShift^{-1}) 
    =
    \frac{\mathscr{P}(\timeShiftOperator, \fourierShift)}{\mathscr{Q}(\timeShiftOperator, \fourierShift)}
    \quad \text{with}\quad 
    \mathscr{P}(\timeShiftOperator, \fourierShift) = \timeShiftOperator({1 - \timeShiftOperator^2 + \courantNumber\timeShiftOperator(\fourierShift^{-1}   + \stableRoot(\timeShiftOperator^{-1}))})
    \\
    \text{and}\quad
    \mathscr{Q}(\timeShiftOperator, \fourierShift) = 
    {(1 - \timeShiftOperator\sum_{k\geq 0}\coefficientEventualBoundaryScheme_k \stableRoot(\timeShiftOperator^{-1})^k - \timeShiftOperator^2\sum_{k\geq 0}\coefficientEventualBoundarySchemeOld_k\stableRoot(\timeShiftOperator^{-1})^k)(1-\timeShiftOperator^2 - \courantNumber\timeShiftOperator (\fourierShift-\fourierShift^{-1}))}.
\end{multline*}
Now, considering $p, q \in\naturals^{*}$ coprime---according to \cite[Definition 3.12]{melczer2021invitation}---analyze the $(q, p)$-\strong{diagonal} of $\doubleZTransformed{\globalTruncationError}(\timeShiftOperator^{-1}, \fourierShift^{-1})$ essentially means to study the asymptotics of the one-index sequence $\tilde{\indexTime}\mapsto\globalTruncationError_{p\tilde{\indexTime}}^{q\tilde{\indexTime}}$.
According to \cite[Theorem 5.1]{melczer2021invitation}, the main contributions to the asymptotics for $\tilde{\indexTime}\to+\infty$ come from  $(\timeShiftOperator, \fourierShift) \in \complex^2$ fulfilling
\begin{equation}\label{eq:tmpNew}
    \mathscr{Q}(\timeShiftOperator, \fourierShift) = 0
    \qquad \text{and}\qquad 
    p\timeShiftOperator\partial_{\timeShiftOperator}\mathscr{Q}(\timeShiftOperator, \fourierShift)
    -q\fourierShift\partial_{\fourierShift}\mathscr{Q}(\timeShiftOperator, \fourierShift)= 0.
\end{equation}
Looking for solutions $\fourierShift = \fourierShift(\timeShiftOperator)$ of the first equation $\mathscr{Q}(\timeShiftOperator, \fourierShift(\timeShiftOperator)) = 0$ (thus solutions of the characteristic equation of the bulk scheme), and taking the total derivative in $\timeShiftOperator$, provide 
\begin{equation*}
    \partial_{\timeShiftOperator} \mathscr{Q}(\timeShiftOperator, \fourierShift(\timeShiftOperator)) + \partial_{\fourierShift} \mathscr{Q}(\timeShiftOperator, \fourierShift(\timeShiftOperator)) \fourierShift'(\timeShiftOperator) = 0, 
    \qquad \text{hence}\qquad 
    \frac{\partial_{\fourierShift} \mathscr{Q}(\timeShiftOperator, \fourierShift(\timeShiftOperator))}{\partial_{\timeShiftOperator} \mathscr{Q}(\timeShiftOperator, \fourierShift(\timeShiftOperator))}
    =
    -\frac{1}{\fourierShift'(\timeShiftOperator) }.
\end{equation*}
Plugging it into the second equation of \eqref{eq:tmpNew}, we obtain
\begin{equation*}
    \frac{p}{q}
    =
    -\frac{\fourierShift(\timeShiftOperator)}{\timeShiftOperator \fourierShift'(\timeShiftOperator) },
\end{equation*}
which states that $p/q$ is the group velocity of the mode $(\timeShiftOperator, \fourierShift(\timeShiftOperator))$ fulfilling the characteristic equation.
\end{remark}

\begin{figure}[h]
\begin{center}
    \begin{tikzpicture}[thick]

  \def\xr{3} \def\yr{3}

  \draw [->] (-\xr-1,0) -- (\xr+1,0) node [above left] {$\textnormal{Re}$};
  \draw [->] (0,-\yr-1) -- (0,\yr+1) node [left] {$\textnormal{Im}$};


  \draw[color=gray] (0, 0) circle (\yr);


  \node (bp1) at ({\yr*cos(60)},{\yr*sin(60)}) {};
  \node (bp2) at ({\yr*cos(60-180)},{\yr*sin(60-180)}) {};
  \node (bp3) at ({\yr*cos(-60)},{\yr*sin(-60)}) {};
  \node (bp4) at ({\yr*cos(-60+180)},{\yr*sin(-60+180)}) {};

  \draw (bp1) node[] {$\times$} node[above right] {$e^{i\branchPointAngle}$};
  \draw (bp2) node[] {$\times$} node[below left] {$-e^{i\branchPointAngle}$};
  \draw (bp3) node[] {$\times$} node[below right] {$e^{-i\branchPointAngle}$};
  \draw (bp4) node[] {$\times$} node[above left] {$-e^{-i\branchPointAngle}$};


  \draw[dashed] (bp1.center) -- (bp4.center);
  \draw[dashed] (bp2.center) -- (bp3.center);
  

  \draw[->, color=YellowGreen] ({\yr*cos(-80+180-2)},{\yr*sin(-80+180-2)})  arc (-80+180-2:90:\yr) node[midway,below] {$\nu\searrow0$};
  \draw[->, color=SpringGreen] ({\yr*cos(-80+180+2)},{\yr*sin(-80+180+2)})  arc (-80+180+2:180:\yr) node[midway,right] {$\nu\nearrow|\courantNumber|$};


  \node (sp1) at ({\yr*cos(30)},{\yr*sin(30)}) {};
  \node (sp2) at ({\yr*cos(30-180)},{\yr*sin(30-180)}) {};
  \node (sp3) at ({\yr*cos(-30)},{\yr*sin(-30)}) {};
  \node (sp4) at ({\yr*cos(-30+180)},{\yr*sin(-30+180)}) {};

  \draw (sp1) node[] {$\bullet$} node[above right] {$e^{i\vartheta_{\textnormal{SP}}}$};
  \draw (sp2) node[] {$\bullet$} node[below left] {$-e^{i\vartheta_{\textnormal{SP}}}$};
  \draw (sp3) node[] {$\bullet$} node[below right] {$e^{-i\vartheta_{\textnormal{SP}}}$};
  \draw (sp4) node[] {$\bullet$} node[above left] {$-e^{-i\vartheta_{\textnormal{SP}}}$};

  \draw[->, color=BrickRed] ({\yr*cos(28)},{\yr*sin(28)})  arc (28:0:\yr) node[midway,right] {$\nu\nearrow|\courantNumber|$};
  \draw[->, color=NavyBlue] ({\yr*cos(32)},{\yr*sin(32)})  arc (32:58:\yr) node[midway,left] {$\nu\searrow 0$};


   \node (ks1) at ({\yr*cos(-80+180)},{\yr*sin(-80+180)}) {};
   \node (ks4) at ({\yr*cos(-80)},{\yr*sin(-80)}) {};

  \draw (ks1) node[color=ForestGreen] {$\diamond$} node[above, color=ForestGreen] {$\stableRoot(e^{i\vartheta_{\textnormal{SP}}}) = -e^{-i\ksAtsaddlePointAngle}$};
  \draw (ks4) node[color=ForestGreen] {$\diamond$} node[below, color=ForestGreen] {$\stableRoot(-e^{i\vartheta_{\textnormal{SP}}}) = e^{-i\ksAtsaddlePointAngle}$};

\end{tikzpicture}
\end{center}\caption{\label{fig:tikzSaddlePoints}Schematic representation of branch points (crosses) and possible branch cuts (dashed lines); saddle points (solid dots); and associated values of $\stableRoot$ (empty diamonds). Trends for $\nu\to 0$ and $\nu\to|\courantNumber|$ are also illustrated.}
\end{figure}

\begin{lemma}[Saddle points]\label{lemma:saddlePoints}
    Assume $-1<\courantNumber<0$.
    Let $\nu\geq 0$.
    Then, the saddle points of the function $\timeShiftOperator \mapsto f(\timeShiftOperator; \nu)$ are as follows.
    \begin{itemize}
        \item For $\nu = 0$, associated to a \strong{near-wall} zone: no finite saddle points.
        \item For $\nu\in (0, |\courantNumber|)$, associated to a \strong{transition} zone: four non-degenerate saddle points on $\unitCircle$. 
        More precisely, these saddle points are 
        \begin{equation}\label{eq:saddlePointsPhase}
            e^{\pm i \saddlePointAngle}
            \quad \text{and}\quad
            e^{\pm i (\saddlePointAngle - \pi)}
            , 
            \qquad \text{where}\qquad
            \saddlePointAngle = \saddlePointAngle(\nu)
            \definitionEquality
            \tfrac{1}{2}
            \arccos
            \Bigl( 
            \frac{1+\nu^2 - 2\courantNumber^2}{1-\nu^2}
            \Bigr )
            \in (0, \branchPointAngle).
        \end{equation}
        Moreover, we have 
        \begin{multline}\label{eq:ksSaddlePoints}
            \stableRoot(e^{\pm i \saddlePointAngle}) = e^{\mp i (\ksAtsaddlePointAngle- \pi)}
            \quad \text{and}\quad 
            \stableRoot(e^{\pm i (\saddlePointAngle- \pi)}) = e^{\mp i\ksAtsaddlePointAngle}, 
            \\
            \text{where}\qquad 
            \ksAtsaddlePointAngle = \ksAtsaddlePointAngle(\nu) \definitionEquality 
            \arccos\Bigl (-\frac{\nu}{\courantNumber}\sqrt{\frac{1-\courantNumber^2}{1-\nu^2}}\Bigr )
            \in (0, \tfrac{\pi}{2}).
        \end{multline}
        We deduce that $f(e^{\pm i \saddlePointAngle}; \nu), f(e^{\pm i (\saddlePointAngle- \pi)}; \nu) \in i\reals$.
        Finally, the saddle points are non-degenerate: $f''(e^{\pm i \saddlePointAngle}; \nu) \neq 0$ and $f''(e^{\pm i (\saddlePointAngle- \pi)}; \nu) \neq 0$.
        
        \item For $\nu = -\courantNumber$, associated to a \strong{front} zone: two degenerate (double) saddle points on $\unitCircle$.
        
        These saddle points are 
        \begin{equation*}
            \pm 1, 
            \qquad \text{with}\qquad 
            \stableRoot(\pm 1) = \mp 1
            \quad \text{and}\quad 
            f''(\pm 1; -\courantNumber) = 0.
        \end{equation*}
        
        \item For $\nu\in(-\courantNumber, 1)$, associated to a zone \strong{ahead-of-the-front}: saddle points are real, two in $\unitDisk$ and two in $\neighborhoodInfinity$. 
    \end{itemize}
\end{lemma}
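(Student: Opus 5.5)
The plan is to turn the saddle point equation $f'(\timeShiftOperator;\nu)=0$ into an explicit algebraic equation in $\timeShiftOperator$ and solve it in closed form. Differentiating the characteristic relation $\timeShiftOperator^2 - 1 - \courantNumber\timeShiftOperator(\stableRoot^{-1}-\stableRoot)=0$ implicitly gives
\begin{equation*}
    \frac{\timeShiftOperator\stableRoot'(\timeShiftOperator)}{\stableRoot(\timeShiftOperator)} = -\frac{\timeShiftOperator^2+1}{\sqrt{\timeShiftOperator^4 + 2(2\courantNumber^2-1)\timeShiftOperator^2 + 1}},
\end{equation*}
with the square root chosen consistently with $\stableRoot$. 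Indeed, $\courantNumber\timeShiftOperator(\stableRoot^{-1}+\stableRoot)$ is exactly that square root, by the explicit formula for $\stableRoot$. Hence $f'(\timeShiftOperator;\nu)=\timeShiftOperator^{-1}(1+\nu\timeShiftOperator\stableRoot'/\stableRoot)=0$ becomes
\begin{equation*}
    \sqrt{\timeShiftOperator^4 + 2(2\courantNumber^2-1)\timeShiftOperator^2 + 1}=\nu(\timeShiftOperator^2+1).
\end{equation*}
For $\nu=0$ this would force $\timeShiftOperator$ to be a branch point. There the derivative is not defined and $f'$ blows up, so there are no finite saddle points.

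For $\nu>0$, I would square the equation. Setting $w=\timeShiftOperator^2$, this gives the quadratic
\begin{equation*}
    (1-\nu^2)w^2+2(2\courantNumber^2-1-\nu^2)w+(1-\nu^2)=0.
\end{equation*}
Its roots have product one, and their sum is $-2(2\courantNumber^2-1-\nu^2)/(1-\nu^2)$. So they are $e^{\pm 2i\saddlePointAngle}$ with $\cos 2\saddlePointAngle=(1+\nu^2-2\courantNumber^2)/(1-\nu^2)$, which lies in $(-1,1)$ exactly when $\nu<|\courantNumber|$. This gives the four points $\pm e^{\pm i\saddlePointAngle}$, which are the ones in \eqref{eq:saddlePointsPhase}. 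The bound $\saddlePointAngle\in(0,\branchPointAngle)$ follows from monotonicity of the ratio in $\nu$, which equals $1-2\courantNumber^2$ at $\nu=0$ and $1$ at $\nu=|\courantNumber|$.

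At $\nu=|\courantNumber|$, $w=1$ is a double root, giving $\timeShiftOperator=\pm1$. For $\nu\in(|\courantNumber|,1)$, the roots $w$ are real, positive, reciprocal and distinct, giving real $\timeShiftOperator$ with two in $\unitDisk$ and two in $\neighborhoodInfinity$.

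Squaring may introduce spurious roots, namely those where the determination of the square root attached to $\stableRoot$ has the opposite sign. I would rule these out by checking the sign directly. On $\unitCircle$ with $\timeShiftOperator=e^{i\theta}$, the relation reads $\timeShiftOperator^{-1}\sqrt{\cdot}=\courantNumber(\stableRoot^{-1}+\stableRoot)$. With $\stableRoot=e^{-i\ksAtsaddlePointAngle}$-type values this equals $2\courantNumber\cos(\cdot)$, while $\nu(\timeShiftOperator+\timeShiftOperator^{-1})=2\nu\cos\theta$. Matching the signs selects the stated pairing of $\stableRoot$ values, and the relevant branch is determined by continuity from $\neighborhoodInfinity$ (limit $|\timeShiftOperator|\to1^+$).

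The values of $\stableRoot$ at the saddle points, \eqref{eq:ksSaddlePoints}, I would compute from the relation $\timeShiftOperator-\timeShiftOperator^{-1}=\courantNumber(\stableRoot^{-1}-\stableRoot)$. Write $\stableRoot=e^{i\phi}$; its modulus is one because at a saddle point on $\unitCircle$ both roots lie on $\unitCircle$. The relation then becomes $\sin\theta=-\courantNumber\sin\phi$. Combining this with the saddle equation $2\courantNumber\cos\phi\,(\ldots)=2\nu\cos\theta$ and the formula for $\cos2\saddlePointAngle$ gives $\cos^2\ksAtsaddlePointAngle=\nu^2(1-\courantNumber^2)/(\courantNumber^2(1-\nu^2))$. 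The signs are then fixed by continuity in $\nu$, matching $\nu\to|\courantNumber|$, where $\stableRoot(\pm1)=\mp1$.

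Since $|\timeShiftOperator|=|\stableRoot|=1$ and the principal logarithm is used, $f\in i\reals$ at these points. For the second derivative, I would differentiate $f'=\timeShiftOperator^{-1}(1-\nu(\timeShiftOperator^2+1)/\sqrt{\cdot})$ at a zero of the bracket. This gives
\begin{equation*}
    f''\propto \frac{d}{d\timeShiftOperator}\Bigl[\frac{\timeShiftOperator^2+1}{\sqrt{\cdot}}\Bigr],
\end{equation*}
which vanishes exactly when the squared quadratic in $w$ has a double root, i.e.\ when $\nu=|\courantNumber|$. Hence the saddle points are non-degenerate in the transition zone and degenerate (double) at $\pm1$ in the front case.

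I expect the main obstacle to be the bookkeeping of branches: showing that the roots of the squared equation are genuine zeros of $f'$ for the branch $\stableRoot$, extended to $\unitCircle$ by continuity, and getting the correct signs in \eqref{eq:ksSaddlePoints}. The unsquared equation is where these signs are settled.
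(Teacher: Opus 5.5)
Your proposal is correct and follows the paper's route: reduce $f'=0$ to the quartic $(1-\nu^2)\timeShiftOperator^4+2(2\courantNumber^2-\nu^2-1)\timeShiftOperator^2+1-\nu^2=0$, read off its roots in the three regimes of $\nu$, and show that $f''$ can vanish at a saddle point only at $\timeShiftOperator=\pm1$; your implicit-differentiation form of $f'$, Vieta's relations and the double-root criterion are tidier than the paper's explicit fraction, its appeal to Strikwerda, and its factorisation of $\timeShiftOperator^{-1}f'+f''$. Your worry about spurious roots from squaring is legitimate and the paper's proof skips it (it just asserts that the numerator vanishes on the zeros of the quartic); it resolves as you suggest provided $\stableRoot$ on $\unitCircle$ is fixed first by continuity from $\neighborhoodInfinity$ and not selected by the saddle equation itself (for instance $\textnormal{Re}\,\stableRoot<0$ on the arc $|\textnormal{Arg}\,\timeShiftOperator|<\branchPointAngle$, since $\stableRoot(1)=-1$ and $\textnormal{Re}\,\stableRoot\neq0$ there), and your only slip is at $\nu=0$, where $f'(\timeShiftOperator;0)=\timeShiftOperator^{-1}$ is finite and nonzero everywhere, branch points included, so nothing blows up.
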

\Cref{lemma:saddlePoints} on \strong{degeneracy/non-degeneracy} of saddle points share some similarities with the \strong{Taylor expansion of the logarithm of symbols} in \cite{thomee1965stability}, relevant when no boundary is present.
This point will be made clearer once schemes on $\relatives$ are discussed, \confer{} \Cref{sec:GreenFunction}, and the reader is referred to \Cref{rem:linkwithThomee} and \ref{rem:sameSP} on this concern.
Moreover, as no saddle point exists for $\nu = 0$, there are no glancing modes.
This is different from the case without boundary, see \Cref{sec:GreenFunction}.
\begin{remark}[Saddle points away from branch points, and coalescence]
    When $\nu \in (0,|\courantNumber|]$, there is no issue when deforming contours in $\neighborhoodInfinity$ to pass arbitrarily close to the saddle points, as they do no coincide with the branch points.
    Moreover, note that $ \lim_{\nu\searrow 0}\saddlePointAngle(\nu) = \branchPointAngle$.
    On the other hand, the celebrated \strong{coalescence} takes place for $\nu\nearrow|\courantNumber|$, that is $\lim_{\nu\nearrow|\courantNumber|}\saddlePointAngle(\nu) = 0$.
    The overall situation is depicted in \Cref{fig:tikzSaddlePoints}.
\end{remark}

\subsubsection{Stable boundary conditions}\label{sec:stable}

Before giving precise asymptotic results for $\globalTruncationError_{\indexSpace}^{\indexTime}$ for large $\indexTime$, let us state results on the asymptotics of related quantities summed in $\indexSpace\in\naturals$.
We start by a result on the zero and first-order moments in space.
This is of little use, for instance to estimate the $L^1$ norm of the sequence, as the sequence strongly oscillates, \confer{} \Cref{fig:my_label} and \ref{fig:cumulativeSum}.
\begin{proposition}[Moments in space]\label{prop:momentsInSpaceStable}
Consider $\indexTime\gg 1$.
    Let \Cref{ass:stableBulk} and \ref{ass:stableBC}  be fulfilled.
    Then, the zero order moment of $\globalTruncationError_{\indexSpace}^{\indexTime}$ solution of \eqref{eq:zeroInitialError}--\eqref{eq:schemeErrorInitial}--\eqref{eq:schemeErrorEventual} is given by
    \begin{equation*}
    \sum_{\indexSpace\geq 0}\globalTruncationError_{\indexSpace}^{\indexTime}
    = 
    \frac{\courantNumber(-1)^{\indexTime}}{1 + \sum_{k\geq 0}(\coefficientEventualBoundaryScheme_k-\coefficientEventualBoundarySchemeOld_k)} + \bigO{\indexTime^{-3/2}}
    \quad \text{and}\quad
    \sum_{\indexSpace\geq 0}(-1)^{\indexSpace}\globalTruncationError_{\indexSpace}^{\indexTime}
    = 
    \frac{\courantNumber}{\sum_{k\geq 0}(-1)^{k}(\coefficientEventualBoundaryScheme_k+\coefficientEventualBoundarySchemeOld_k) - 1} + \bigO{\indexTime^{-3/2}},
\end{equation*}
whereas the first order moment satisfies
\begin{multline*}
\sum_{\indexSpace\geq 0}\indexSpace\globalTruncationError_{\indexSpace}^{\indexTime}= 
-\frac{\courantNumber^2 (-1)^{\indexTime}}{1+\sum_{k\geq 0}(\coefficientEventualBoundaryScheme_k-\coefficientEventualBoundarySchemeOld_k)}
\indexTime\\
+\frac{\courantNumber(-1)^{\indexTime}}{1+\sum_{k\geq 0}(\coefficientEventualBoundaryScheme_k-\coefficientEventualBoundarySchemeOld_k)}
    \overbrace{\Biggl ( 
    \courantNumber
    +\frac{\sum_{k\geq 0}k(\coefficientEventualBoundaryScheme_k-\coefficientEventualBoundarySchemeOld_k) + \courantNumber \sum_{k\geq 0}(2\coefficientEventualBoundarySchemeOld_k-\coefficientEventualBoundaryScheme_k)}{1+\sum_{k\geq 0}(\coefficientEventualBoundaryScheme_k-\coefficientEventualBoundarySchemeOld_k)}
    \Biggr )}^{=0\text{ for \Cref{ex:upwind} and \ref{ex:upwindWithDiffusion}}}
    + \bigO{\indexTime^{-3/2}}.
\end{multline*}
\end{proposition}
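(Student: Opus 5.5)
The approach is to compute the generating functions of the three space-summed quantities in closed form, and then apply singularity analysis in the variable $\timeShiftOperator^{-1}$, in the spirit of \cite{flajolet2009analytic}. For $\timeShiftOperator\in\neighborhoodInfinity$ we have $|\stableRoot(\timeShiftOperator)|<1$. Hence the geometric sums in $\indexSpace$ of $\zTransformed{\globalTruncationError}_{\indexSpace}(\timeShiftOperator)=\timeShiftOperator\stableRoot(\timeShiftOperator)^{\indexSpace}/D(\timeShiftOperator)$ converge, where $D$ denotes the denominator in the definition of $g$ in \eqref{eq:definitionFandG}. Exchanging the sum in $\indexSpace$ with the inverse $\timeShiftOperator$-transform on a contour $C\subset\neighborhoodInfinity$, I will use
\begin{equation*}
\sum_{\indexSpace\geq 0}\globalTruncationError_{\indexSpace}^{\indexTime}=\frac{1}{2\pi i}\oint_C \frac{\timeShiftOperator^{\indexTime}}{D(\timeShiftOperator)(1-\stableRoot(\timeShiftOperator))}\differential{\timeShiftOperator},\qquad
\sum_{\indexSpace\geq 0}(-1)^{\indexSpace}\globalTruncationError_{\indexSpace}^{\indexTime}=\frac{1}{2\pi i}\oint_C \frac{\timeShiftOperator^{\indexTime}}{D(\timeShiftOperator)(1+\stableRoot(\timeShiftOperator))}\differential{\timeShiftOperator},
\end{equation*}
and, for the first moment, the kernel $\stableRoot/(1-\stableRoot)^2$ in place of $1/(1-\stableRoot)$.

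Next I locate the singularities on $\unitCircle$. By \Cref{ass:stableBC}, $D$ has no zeros on $\closedNeighborhoodInfinity$. Setting $\fourierShift=\pm1$ in the characteristic equation forces $\timeShiftOperator^2=1$. Combined with $\stableRoot(\pm1)=\mp1$ from \Cref{lemma:saddlePoints}, this shows that $1-\stableRoot$ vanishes on $\closedNeighborhoodInfinity$ only at $\timeShiftOperator=-1$, and $1+\stableRoot$ only at $\timeShiftOperator=1$. Since $\courantNumber\neq0$, these points are distinct from the four branch points of \Cref{lemma:BranchPoints}, so $\stableRoot$ is analytic near them.

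Differentiating the characteristic equation at $(\timeShiftOperator,\fourierShift)=(-1,1)$ gives $-2-2\courantNumber\stableRoot'(-1)=0$, that is $\stableRoot'(-1)=-1/\courantNumber$. Therefore $1-\stableRoot(\timeShiftOperator)=(\timeShiftOperator+1)/\courantNumber+\bigO{(\timeShiftOperator+1)^2}$ is a simple zero. The residue of $\timeShiftOperator^{\indexTime}/(D(1-\stableRoot))$ at $-1$ is $(-1)^{\indexTime}\courantNumber/D(-1)$, and $D(-1)=1+\sum_k(\coefficientEventualBoundaryScheme_k-\coefficientEventualBoundarySchemeOld_k)$ because $\stableRoot(-1)=1$. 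This is exactly the claimed leading term. The symmetric computation at $\timeShiftOperator=1$, where $\stableRoot(1)=-1$ and $\stableRoot'(1)=-1/\courantNumber$, gives $1+\stableRoot(\timeShiftOperator)=-(\timeShiftOperator-1)/\courantNumber+\dots$. The resulting residue is $-\courantNumber/D(1)=\courantNumber/(\sum_k(-1)^k(\coefficientEventualBoundaryScheme_k+\coefficientEventualBoundarySchemeOld_k)-1)$, which is the claimed constant.

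To justify the remainders, I deform $C$ onto a contour slightly inside $\unitDisk$ that avoids the branch cuts through small Hankel-type indentations at the four branch points, i.e. a $\Delta$-domain after the change of variables $\timeShiftOperator\mapsto\timeShiftOperator^{-1}$. Along the way the contour picks up the pole at $\mp1$. This deformation is legitimate because $\stableRoot$, $D$ and $1\mp\stableRoot$ continue analytically to a thin neighbourhood of $\unitCircle$ minus the cuts. By continuity and compactness, $D$ and $1\mp\stableRoot$ have no zeros there apart from the ones already identified, provided the neighbourhood is thin enough. The portion of the contour inside $\unitDisk$ contributes an exponentially small term. Near each branch point $\timeShiftOperator_{\textnormal{BP}}$ one has $\stableRoot=a+b(\timeShiftOperator-\timeShiftOperator_{\textnormal{BP}})^{1/2}+\dots$, and the integrand, which is analytic in $\stableRoot$ there, has the same square-root form. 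The transfer theorems then give contributions of order $\indexTime^{-3/2}$.

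For the first moment, the kernel $\stableRoot/(1-\stableRoot)^2$ has a double pole at $-1$. Its residue produces $(-1)^{\indexTime}\indexTime$ times $-\courantNumber^2/D(-1)$, coming from $\courantNumber^2\stableRoot(-1)/D(-1)$ and the derivative of $\timeShiftOperator^{\indexTime}$. It also produces an $\indexTime$-independent term. The main obstacle is computing this constant term. It requires the second-order expansion of $\stableRoot$ at $-1$, obtained by differentiating the characteristic equation twice. It also requires $D'(-1)=-2-\sum_k k(\coefficientEventualBoundaryScheme_k+\coefficientEventualBoundarySchemeOld_k)\stableRoot'(-1)\cdots$, evaluated with care for the $\timeShiftOperator$ versus $\timeShiftOperator^0$ weighting of $\coefficientEventualBoundaryScheme$ and $\coefficientEventualBoundarySchemeOld$. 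All of these pieces must then be assembled into the bracketed expression of the statement. I would carry out this Laurent expansion symbolically and check the vanishing of the bracket on \Cref{ex:upwind} as a sanity test.
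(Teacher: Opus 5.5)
Your proposal is correct and is essentially the paper's own argument: summing $\zTransformed{\globalTruncationError}_{\indexSpace}(\timeShiftOperator)=\timeShiftOperator\stableRoot(\timeShiftOperator)^{\indexSpace}/D(\timeShiftOperator)$ geometrically and reading off the simple pole at $\timeShiftOperator=-1$ (resp.\ $\timeShiftOperator=1$) is exactly the paper's second proof of the zero-order moment, and it coincides with the paper's double generating function evaluated at $\fourierShift=\pm1$. Likewise, your kernel $\stableRoot/(1-\stableRoot)^2$ is the same as the paper's $\partial_{\fourierShift}$ of that function at $\fourierShift=1$, and in both versions the branch points supply the $\bigO{\indexTime^{-3/2}}$ remainder. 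The constant term you left open does close: with $\stableRoot''(-1)=(1-\courantNumber)/\courantNumber^2$ and $D'(-1)=-2-\sum_k\coefficientEventualBoundaryScheme_k-\courantNumber^{-1}\sum_k k(\coefficientEventualBoundaryScheme_k-\coefficientEventualBoundarySchemeOld_k)$, the residue at $-1$ is $\frac{(-1)^{\indexTime}\courantNumber^2}{D(-1)}\bigl(-\indexTime-1-\frac{D'(-1)}{D(-1)}\bigr)$, which is precisely the stated formula. Note that this $D'(-1)$ differs from your sketch by the opposite sign on $\sum_k k\coefficientEventualBoundaryScheme_k$ and by an extra $-\sum_k\coefficientEventualBoundaryScheme_k$, both coming from the factor $\timeShiftOperator$ in front of the $\coefficientEventualBoundaryScheme$-sum.
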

Unlike the $L^1$ norm, which is not well-described by the zero-order moment, we can find the asymptotic of the $L^2$ norm, given as follows.
\begin{proposition}[$L^2$ norm asymptotically constant]\label{prop:asymptoticL2Stable}
     Let \Cref{ass:stableBulk} and \ref{ass:stableBC}  be fulfilled.
     Then, the $L^2$ norm of $\globalTruncationError_{\indexSpace}^{\indexTime}$, solution of \eqref{eq:zeroInitialError}--\eqref{eq:schemeErrorInitial}--\eqref{eq:schemeErrorEventual}, satisfies
     \begin{equation}\label{eq:asymptoticL2Norm}
         \lim_{\indexTime\to+\infty}
         \lVert \globalTruncationError^{\indexTime}\rVert_2
         =
         \Biggl ( 
         -2\pi
         \int_0^{2\pi}
         \underbrace{g(e^{i\vartheta})g(e^{-i\vartheta})}_{\text{boundary dep.}}
         \underbrace{\frac{
         \sqrt{e^{4 i\vartheta}+2(2\courantNumber^2-1)e^{2 i\vartheta}+1}}{
         e^{2 i\vartheta}+1}}_{\text{(bulk-dependent) weight}}
         \differential{\vartheta}
         \Biggr )^{1/2}.
     \end{equation}
     Moreover, we have that for large $\indexTime\gg 1$, $\lVert \globalTruncationError^{\indexTime}\rVert_2 = \lim_{k\to+\infty}
         \lVert \globalTruncationError^{k}\rVert_2 + \bigO{\indexTime^{-3/2}}$.
\end{proposition}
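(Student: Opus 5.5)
We write $\lVert\globalTruncationError^{\indexTime}\rVert_2^2$ as a coefficient of a generating function and analyze it by singularity analysis, as for \Cref{prop:momentsInSpaceStable}. Using \eqref{eq:definition_Error_LeapFrog} twice, with two independent contours, gives
\begin{equation*}
    \lVert\globalTruncationError^{\indexTime}\rVert_2^2
    =\sum_{\indexSpace\geq 0}\globalTruncationError_{\indexSpace}^{\indexTime}\globalTruncationError_{\indexSpace}^{\indexTime}
    =\oint_{C}\oint_{C'} g(\timeShiftOperator)g(w)\,\timeShiftOperator^{\indexTime}w^{\indexTime}\,\frac{\differential{\timeShiftOperator}\,\differential{w}}{1-\stableRoot(\timeShiftOperator)\stableRoot(w)} .
\end{equation*}
Since $C,C'\subset\neighborhoodInfinity$, we have $|\stableRoot|<1$ on both contours, so the geometric series in $\indexSpace$ converges uniformly and may be exchanged with the integrals. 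The real-valuedness of the sequence is used implicitly.

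The next step is to reduce this to a single integral. We perform the $w$-integration for fixed $\timeShiftOperator$ by moving $C'$ inward and picking up the pole where $\stableRoot(\timeShiftOperator)\stableRoot(w)=1$. The characteristic equation is invariant under $\fourierShift\mapsto -\fourierShift^{-1}$, $\timeShiftOperator\mapsto -\timeShiftOperator^{-1}$, since $(\timeShiftOperator^{2}-1)/\timeShiftOperator$ is odd under this map. Hence $\stableRoot(-\timeShiftOperator^{-1})=-\stableRoot(\timeShiftOperator)^{-1}$, up to checking which branch is selected. Together with the relation $\stableRoot(\bar w)=\overline{\stableRoot(w)}$, this suggests that on the unit circle the pole is reached at $w=\timeShiftOperator^{-1}$, with $\stableRoot(\timeShiftOperator^{-1})=\stableRoot(\timeShiftOperator)^{-1}$ for $|\timeShiftOperator|=1$ away from the branch points.

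A cleaner route is to let both contours tend to $\unitCircle$ and parametrize $\timeShiftOperator=e^{i\vartheta}$, $w=e^{i\varphi}$. We then write $n$-dependence as $e^{i\indexTime(\vartheta+\varphi)}$ and note that $1/(1-\stableRoot(e^{i\vartheta})\stableRoot(e^{i\varphi}))$ has a singularity on the anti-diagonal $\varphi=-\vartheta$, where $|\stableRoot|=1$ (the non-damped region $\vartheta\in(-\branchPointAngle,\branchPointAngle)$ and its $\pi$-shift). By a Sokhotski--Plemelj/Riemann--Lebesgue argument, the oscillatory factor kills everything except the residue on $\varphi=-\vartheta$. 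That residue equals $2\pi i\, g(e^{i\vartheta})g(e^{-i\vartheta})\,\stableRoot(e^{i\vartheta})/\bigl(\partial_w\stableRoot(w)|_{w=e^{-i\vartheta}}\cdot(\dots)\bigr)$. Computing $\stableRoot'$ from the explicit formula yields the weight $\sqrt{\timeShiftOperator^4+2(2\courantNumber^2-1)\timeShiftOperator^2+1}/(\timeShiftOperator^2+1)$ after simplification, times $\timeShiftOperator^{-1}\differential{\timeShiftOperator}=i\,\differential{\vartheta}$. This produces \eqref{eq:asymptoticL2Norm} with the factor $-2\pi$ coming from $(2\pi i)^2$ hidden in $g$ combined with the residue. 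On the arcs where $|\stableRoot|<1$, the square root contributes nothing net, since the residue is only present where the pole reaches the circle. The result is the stated integral over $[0,2\pi]$ with the principal branch.

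Equivalently, we can view $\lVert\globalTruncationError^{\indexTime}\rVert_2^2$ as the $\indexTime$-th coefficient of the diagonal/Hadamard-type generating function $\sum_{\indexTime}\timeShiftOperator^{-\indexTime}\lVert\globalTruncationError^{\indexTime}\rVert_2^2$. Its singularity at $\timeShiftOperator=1$ is a simple pole whose residue gives the constant. The remaining singularities on $\unitCircle$ are the images of the branch points from \Cref{lemma:BranchPoints}, where the weight vanishes like a square root. There are no poles of $g$ in $\closedNeighborhoodInfinity$, by \Cref{ass:stableBC}. Transfer theorems from \cite{flajolet2009analytic} then give a remainder of order $\indexTime^{-3/2}$ from the local behavior $(1-e^{\mp i\branchPointAngle}\timeShiftOperator^{-1})^{1/2}$, matching \Cref{prop:momentsInSpaceStable}.

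The main obstacle is making the reduction rigorous. Three points need care: locating the singular set of $1/(1-\stableRoot(\timeShiftOperator)\stableRoot(w))$ correctly with the branch cuts of \Cref{fig:tikzSaddlePoints}, justifying the limit of the contours onto $\unitCircle$ where $|\stableRoot|=1$, and showing that the non-residue part decays like $\indexTime^{-3/2}$ rather than $\indexTime^{-1/2}$. For the last point, I would first try an integration by parts in $\vartheta$ near the branch points, exploiting the square-root vanishing of the weight there, to obtain the $-3/2$ exponent.
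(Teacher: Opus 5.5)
Your argument is essentially the paper's. The paper writes the generating function of $\lVert\globalTruncationError^{\indexTime}\rVert_2^2$ via Hadamard's (Parseval) product formula, sums the geometric series in $\indexSpace$, reads the limit off the simple pole at $\timeShiftOperator=1$, and attributes the $\bigO{\indexTime^{-3/2}}$ remainder to the branch points; that is exactly your ``equivalently'' paragraph. Your double-contour residue on the anti-diagonal $\varphi=-\vartheta$ is the coefficient-side form of the same identity. It yields the full $2\pi i$ residue because the pole approaches from one side; the principal-value part is not killed by Riemann--Lebesgue but supplies half of it.

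One point to tighten, which the paper also glosses over: the pole exists only on the non-damped arcs $|\sin\vartheta|<|\courantNumber|$, so what you actually obtain is the integral over those arcs. To match this with the stated formula over $[0,2\pi]$, note that on the damped arcs $g(e^{i\vartheta})g(e^{-i\vartheta})$ is real, while the principal-branch weight is purely imaginary and changes sign under $\vartheta\mapsto 2\pi-\vartheta$, so that part cancels. This cancellation holds only in this paired, principal-value sense, since the weight has simple poles at $\vartheta=\pm\pi/2$.
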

Equation \eqref{eq:asymptoticL2Norm} can be regarded as a \strong{weighted Parseval identity} in the asymptotic limit, since it involves the term $g(e^{i\vartheta})g(e^{-i\vartheta})$.
The integral on the right-hand side is generally hard---although not impossible for given boundary conditions---to compute analytically. 
Thus, it is conveniently approximated using quadrature formul\ae{}.
In the case of Dirichlet boundary condition in \Cref{ex:Dirichlet}, \eqref{eq:asymptoticL2Norm} explicitly reads---after consistently dealing with the multi-valuedness of the square root and using \cite[Identity 17.7.20]{Abramowitz1964} on degenerate elliptic integrals of third kind:
\begin{equation*}
    \lim_{\indexTime\to+\infty}
         \lVert \globalTruncationError^{\indexTime}\rVert_2
         =
    \sqrt{1-\sqrt{1-\courantNumber^2}}.
\end{equation*}

\begin{proposition}[Near-wall zone]\label{prop:nearWallStable}
Consider $\globalTruncationError_{\indexSpace}^{\indexTime}$ solution of \eqref{eq:zeroInitialError}--\eqref{eq:schemeErrorInitial}--\eqref{eq:schemeErrorEventual} and let \Cref{ass:stableBulk} and \ref{ass:stableBC} hold.
    Let $\indexSpace\in\naturals$ be fixed.
    In the limit $\indexTime\gg1$, a good approximation of $\globalTruncationError_{\indexSpace}^{\indexTime}$ is given by
         \begin{align}
       \globalTruncationError_{\indexSpace}^{\indexTime}
       \sim 
       \sqrt{\frac{2}{\pi |\courantNumber|}} (1-\courantNumber^2)^{1/4}
       \Biggl ( 
        &\textnormal{Re}\Biggl ( 
        \frac{e^{i((1-\indexTime)\branchPointAngle + (1-\indexSpace)\frac{\pi}{2} - \frac{\pi}{4})}}{1-\mathscr{G}_+^0(\branchPointAngle)}
        \Biggl ( 
       \frac{
       \mathscr{G}_+^1(\branchPointAngle)}{1-\mathscr{G}_+^0(\branchPointAngle)}
       +  \indexSpace
       \Biggr )
        \Biggr )\nonumber\\
    +(-1)^{\indexTime}&\textnormal{Re}
    \Biggl ( 
    \frac{e^{i((\indexTime-1)\branchPointAngle + (1-\indexSpace)\frac{\pi}{2}+\frac{\pi}{4})}}{1+\mathscr{G}_-^{0}(-\branchPointAngle)}
    \Biggl ( 
       -\frac{
       \mathscr{G}_-^1(-\branchPointAngle)}{1+\mathscr{G}_-^{0}(-\branchPointAngle)}
       +  \indexSpace
       \Biggr )
       \Biggr )
       \Biggr )\indexTime^{-3/2}
       +\bigO{\indexTime^{-5/2}}.\label{eq:nearWallStable}
   \end{align}
  where
  \begin{equation*}
      \mathscr{G}_{\pm}^0(\vartheta)
      \definitionEquality
      e^{i\vartheta}\sum_{k\geq 0}\coefficientEventualBoundaryScheme_ke^{-ik\frac{\pi}{2}}
      \pm e^{2i\vartheta}\sum_{k\geq 0}\coefficientEventualBoundarySchemeOld_k e^{-ik\frac{\pi}{2}} 
      \quad \text{and}\quad
      \mathscr{G}_{\pm}^1(\vartheta)
      \definitionEquality
      e^{i\vartheta}\sum_{k\geq 1}k \coefficientEventualBoundaryScheme_ke^{-ik\frac{\pi}{2}}
      \pm e^{2i\vartheta}\sum_{k\geq 1}k \coefficientEventualBoundarySchemeOld_k e^{-ik\frac{\pi}{2}}.
  \end{equation*}
  
   In the particular, for the upwind scheme of \Cref{ex:upwind}, the previous expression reduces to
   \begin{align}
        \globalTruncationError_{\indexSpace}^{\indexTime} &\sim \sqrt{\frac{2}{\pi |\courantNumber|}} (1-\courantNumber^2)^{1/4}\nonumber\\
        \times
        \Biggl (
        &\frac{\courantNumber}{1+\courantNumber} 
        \Biggl (\frac{1}{(\sqrt{1-\courantNumber}-\sqrt{1+\courantNumber})^2} + \frac{(-1)^{\indexTime+\indexSpace-1}}{(\sqrt{1-\courantNumber}+\sqrt{1+\courantNumber})^2} \Biggr )
        \sin\Bigl (\branchPointAngle\indexTime +\indexSpace\frac{\pi}{2} - \frac{\pi}{4}\Bigr )\label{eq:nearWallUpwind}\\
        +&\frac{\indexSpace}{\sqrt{1+\courantNumber}} 
        \Biggl (\frac{1}{\sqrt{1-\courantNumber}-\sqrt{1+\courantNumber}} + \frac{(-1)^{\indexTime+\indexSpace-1}}{\sqrt{1-\courantNumber}+\sqrt{1+\courantNumber}} \Biggr )
        \cos\Bigl (\branchPointAngle\indexTime + \indexSpace\frac{\pi}{2} - \frac{\pi}{4}\Bigr )
        \Biggr )
        \indexTime^{-3/2} 
        + \bigO{\indexTime^{-5/2}}.\nonumber
    \end{align}
\end{proposition}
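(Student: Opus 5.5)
With $\indexSpace$ fixed there is no saddle point (\Cref{lemma:saddlePoints}, case $\nu=0$). I would therefore not run a steepest-descent argument, but use singularity analysis in the style of \cite{flajolet2009analytic}. Write $\globalTruncationError_{\indexSpace}^{\indexTime}=\oint_C g(\timeShiftOperator)\stableRoot(\timeShiftOperator)^{\indexSpace}\timeShiftOperator^{\indexTime}\differential{\timeShiftOperator}$ as in \eqref{eq:definition_Error_LeapFrog}. The substitution $\timeShiftOperator\mapsto\timeShiftOperator^{-1}$ turns this into the extraction of the $\indexTime$-th Taylor coefficient of a generating function. That function is analytic in $\unitDisk$, and by \Cref{ass:stableBC} its only singularities on $\unitCircle$ are the four branch points of \Cref{lemma:BranchPoints}, namely $\pm e^{\pm i\branchPointAngle}$. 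Stability excludes zeros of the denominator of $g$ in $\closedNeighborhoodInfinity$, so no pole can compete. Concretely, I would deform $C$ onto $\unitCircle$ and push it slightly into $\unitDisk$ wherever this is allowed, leaving four Hankel-type keyhole contours around the branch points. The deformed part contributes only geometrically small terms.

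Next I would expand locally near $\timeShiftOperator_0=e^{i\branchPointAngle}$, the other three points following by symmetry. The discriminant $\timeShiftOperator^4+2(2\courantNumber^2-1)\timeShiftOperator^2+1$ has a simple zero there, so $\stableRoot(\timeShiftOperator)=\stableRoot(\timeShiftOperator_0)+\alpha(\timeShiftOperator-\timeShiftOperator_0)^{1/2}+\bigO{\timeShiftOperator-\timeShiftOperator_0}$. Since $\stableRoot$ is then a double root of the characteristic equation, $\stableRoot(\timeShiftOperator_0)^2=-1$ and $\stableRoot(e^{i\branchPointAngle})=e^{-i\pi/2}$, which gives the phases $e^{-ik\pi/2}$ appearing in $\mathscr{G}^0_\pm$ and $\mathscr{G}^1_\pm$. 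I would then expand $h(\timeShiftOperator)\definitionEquality g(\timeShiftOperator)\stableRoot(\timeShiftOperator)^{\indexSpace}$ to first order in $\sqrt{\timeShiftOperator-\timeShiftOperator_0}$. The constant term is analytic and contributes nothing at polynomial order. The $\sqrt{\cdot}$ coefficient is
\[
\alpha\Bigl(\indexSpace\,\stableRoot_0^{\indexSpace-1}g_0+\stableRoot_0^{\indexSpace}\,\partial_{\stableRoot}g\Bigr),
\]
where $\partial_{\stableRoot}g$ is the derivative of the denominator with respect to $\stableRoot$. After factoring out $\timeShiftOperator_0^2$, the denominator equals $\timeShiftOperator_0^2(1-\mathscr{G}^0_+)$ at the upper branch point, while its $\stableRoot$-derivative produces $\mathscr{G}^1_+$; at $-e^{-i\branchPointAngle}$ the same computation produces $\mathscr{G}^0_-$ and $\mathscr{G}^1_-$. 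This explains the structure $(1-\mathscr{G}^0)^{-1}\bigl(\mathscr{G}^1/(1-\mathscr{G}^0)+\indexSpace\bigr)$ in \eqref{eq:nearWallStable}.

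The transfer theorem then gives $[\text{coefficient}]\,\timeShiftOperator_0^{\indexTime}\,\indexTime^{-3/2}/\Gamma(-1/2)$, with relative error $\bigO{\indexTime^{-1}}$, hence the $\bigO{\indexTime^{-5/2}}$ remainder. I would compute $\alpha$ from the explicit formula for $\stableRoot$: $|\alpha|^2$ is proportional to the derivative of the discriminant at $\timeShiftOperator_0$ divided by $4\courantNumber^2$, and this gives the prefactor $\sqrt{2/(\pi|\courantNumber|)}(1-\courantNumber^2)^{1/4}$. The conjugate points give complex conjugates, hence the real parts. The points $-e^{\mp i\branchPointAngle}$ carry the factor $(-1)^{\indexTime}$ and have $\stableRoot=e^{\pm i\pi/2}$; symmetry $\timeShiftOperator\to-\timeShiftOperator$ sends $\stableRoot\to-\stableRoot$, which produces the sign change in front of $\mathscr{G}^1_-$.

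For \Cref{ex:upwind}, I would substitute $\coefficientEventualBoundaryScheme_0=1+\courantNumber$ and $\coefficientEventualBoundaryScheme_1=-\courantNumber$, use $\cos\branchPointAngle=\sqrt{1-\courantNumber^2}$ and $\sin\branchPointAngle=|\courantNumber|$, and simplify to \eqref{eq:nearWallUpwind}. The main obstacle is bookkeeping rather than ideas. It consists in fixing the determination of the square root on each side of the cuts consistently with $\stableRoot\in\unitDisk$ for $\timeShiftOperator\in\neighborhoodInfinity$, since this fixes the phases $\pm\pi/4$, and in verifying that the contour can be legitimately pushed inside $\unitDisk$. I would attempt the latter by analytically continuing $g\,\stableRoot^{\indexSpace}$ across $\unitCircle$ away from the cuts, which is possible because the boundary denominator has no zeros on $\unitCircle$.
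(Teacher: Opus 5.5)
Your proposal is correct and follows essentially the same route as the paper. After $z\mapsto z^{-1}$, the four square-root branch points $\pm e^{\pm i\vartheta_{\textnormal{BP}}}$ are the only dominant singularities, with no poles by Assumption~\ref{ass:stableBC}. One then expands $\kappa_{\textnormal{s}}$, $\kappa_{\textnormal{s}}^{j}$ and the inverted boundary denominator to order $(1-z/z_0)^{1/2}$. The Flajolet--Sedgewick transfer theorem for several dominant singularities, with conjugate pairing, gives the formula, and the prefactor comes from the derivative of the discriminant exactly as you say.

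The only slip is a variable mix-up. In the original variable $\kappa_{\textnormal{s}}(e^{i\vartheta_{\textnormal{BP}}})=e^{i\pi/2}$; it is $\kappa_{\textnormal{s}}(z^{-1})$ at $z=e^{i\vartheta_{\textnormal{BP}}}$, in the generating-function variable the paper uses, that equals $e^{-i\pi/2}$. So your ``upper'' point actually produces the complex conjugate of $1-\mathscr{G}_+^0(\vartheta_{\textnormal{BP}})$, which is harmless once conjugate contributions are summed into real parts.
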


\begin{proposition}[Transition zone]\label{prop:transitionZoneStable}
    Consider $\globalTruncationError_{\indexSpace}^{\indexTime}$ solution of \eqref{eq:zeroInitialError}--\eqref{eq:schemeErrorInitial}--\eqref{eq:schemeErrorEventual} and let \Cref{ass:stableBulk} and \ref{ass:stableBC} hold.
    Let $0< \indexSpace < |\courantNumber|\indexTime$ so that $\nu = \frac{\indexSpace}{\indexTime} \in (0, |\courantNumber|)$.
    With the notations introduced in \Cref{lemma:saddlePoints}, set
    \begin{multline*}
        \sigma = \sigma(\nu) \definitionEquality f''(e^{i\saddlePointAngle(\nu)}; \nu)= -\Bigl (1+\frac{1}{\nu}\Bigr )e^{-2i\saddlePointAngle} \\-  \frac{e^{i(\ksAtsaddlePointAngle-6\saddlePointAngle)}((2\courantNumber^2-1)e^{6i\saddlePointAngle}+3e^{4i\saddlePointAngle}+3(2\courantNumber^2-1)e^{2i\saddlePointAngle} + 1 + 8\nu^3 e^{3i\saddlePointAngle} \Bigl (\frac{1-\courantNumber^2}{1-\nu^2}\Bigr )^{3/2})}{{8 \courantNumber \nu^2 \Bigl (\frac{1-\courantNumber^2}{1-\nu^2}\Bigr )^{3/2}}}\in \complex,
    \end{multline*}
    which is only dictated by the bulk scheme.
    Then, in the limit $\indexTime\gg 1$, $\globalTruncationError_{\indexSpace}^{\indexTime}$ is well approximated by 
    \begin{multline}\label{eq:expansionBeyndShock}
        \globalTruncationError_{\indexSpace}^{\indexTime} \sim \sqrt{\frac{2}{\pi |\sigma|}}\\ \times
        \Biggl (\textnormal{cos}\left((\indexTime-1)\saddlePointAngle-\indexSpace \ksAtsaddlePointAngle-\tfrac{1}{2}\textnormal{Arg}(\sigma))\right) \Bigl( \frac{\mathscr{G}_{\textnormal{R}}^{\textnormal{Re}}}{(\mathscr{G}_{\textnormal{R}}^{\textnormal{Re}})^2+(\mathscr{G}_{\textnormal{R}}^{\textnormal{Im}})^2}\,(-1)^\indexSpace -  \frac{\mathscr{G}_{\textnormal{L}}^{\textnormal{Re}}}{(\mathscr{G}_{\textnormal{L}}^{\textnormal{Re}})^2+(\mathscr{G}_{\textnormal{L}}^{\textnormal{Im}})^2} (-1)^\indexTime \Bigr)\\+ \textnormal{sin} \left((\indexTime-1)\saddlePointAngle-\indexSpace \ksAtsaddlePointAngle-\tfrac{1}{2}\textnormal{Arg}(\sigma))\right)  \Bigl( \frac{\mathscr{G}_{\textnormal{R}}^{\textnormal{Im}}}{(\mathscr{G}_{\textnormal{R}}^{\textnormal{Re}})^2+(\mathscr{G}_{\textnormal{R}}^{\textnormal{Im}})^2}\,(-1)^\indexSpace -  \frac{\mathscr{G}_{\textnormal{L}}^{\textnormal{Im}}}{(\mathscr{G}_{\textnormal{L}}^{\textnormal{Re}})^2+(\mathscr{G}_{\textnormal{L}}^{\textnormal{Im}})^2} (-1)^\indexTime \Bigr) \Biggr ) \indexTime^{-1/2} ,
    \end{multline}
    where $\mathscr{G}_{\textnormal{R}}^{\textnormal{Re}}$ and $\mathscr{G}_{\textnormal{R}}^{\textnormal{Im}}$ (respectively, $\mathscr{G}_{\textnormal{L}}^{\textnormal{Re}}$ and $\mathscr{G}_{\textnormal{L}}^{\textnormal{Im}}$) are the real and the imaginary part of $\mathscr{G}_{\textnormal{R}}(\saddlePointAngle, \ksAtsaddlePointAngle)$ (respectively $\mathscr{G}_{\textnormal{L}}(\saddlePointAngle, \ksAtsaddlePointAngle)$), defined by
    \begin{multline*}
     \mathscr{G}_{\textnormal{R}}(\vartheta, \ksAtsaddlePointAngleplain) \definitionEquality e^{i\vartheta}  - \sum_{k\geq 0}(-1)^k \coefficientEventualBoundaryScheme_k e^{-i k \ksAtsaddlePointAngleplain} -e^{-i\vartheta}\sum_{k\geq 0}(-1)^k \coefficientEventualBoundarySchemeOld_k  e^{-i k\ksAtsaddlePointAngleplain}
     \\ 
     \text{and}
     \qquad 
     \mathscr{G}_{\textnormal{L}}(\vartheta, \ksAtsaddlePointAngleplain) \definitionEquality  e^{i\vartheta}  + \sum_{k\geq 0}\coefficientEventualBoundaryScheme_k e^{-i k \ksAtsaddlePointAngleplain} -e^{-i\vartheta}\sum_{k\geq 0} \coefficientEventualBoundarySchemeOld_k  e^{-i k\ksAtsaddlePointAngleplain}.
\end{multline*}
    We stress that  $\sigma$, $\saddlePointAngle$, and $\ksAtsaddlePointAngle$ are functions of $\nu$.
    
    In the particular, for the upwind scheme of \Cref{ex:upwind}, the previous expression reduces to
    \begin{equation}\label{eq:expansionBeyndShock_upwindScheme}
        \globalTruncationError_{\indexSpace}^{\indexTime} \sim \sqrt{\frac{2}{\pi |\sigma|}} \Biggl (
        \frac{(-1)^{\indexSpace}}{-(1+\courantNumber)+\sqrt{(1-\courantNumber^2)\frac{1+\nu}{1-\nu}}} 
        -\frac{(-1)^{\indexTime}}{(1+\courantNumber)+\sqrt{(1-\courantNumber^2)\frac{1+\nu}{1-\nu}}} \Biggr ) \cos((\indexTime - 1)\saddlePointAngle- \indexSpace \ksAtsaddlePointAngle-\tfrac{1}{2}\textnormal{Arg}(\sigma)) \indexTime^{-1/2}.
    \end{equation}
    This particular profile is made up of two self-similar (since depending---up to the scale factor---on $\indexTime$ and $\indexSpace$ only through $\nu$) envelopes
    \begin{equation}\label{eq:envelopes}
        \pm \sqrt{\frac{2}{\pi |\sigma|}} \frac{1}{\nu-\courantNumber}\sqrt{(1-\nu^2)\frac{1-\courantNumber}{1+\courantNumber}} \indexTime^{-1/2}
        \qquad \text{and}\qquad
        \pm \sqrt{\frac{2}{\pi |\sigma|}} \frac{1-\nu}{\nu-\courantNumber} \indexTime^{-1/2}
    \end{equation}
    with a modulation by $\cos((\indexTime-1)\saddlePointAngle- \indexSpace \ksAtsaddlePointAngle-\tfrac{1}{2}\textnormal{Arg}(\sigma))$.
\end{proposition}

\begin{proposition}[Front zone]\label{prop:frontZoneStable}
    Consider $\globalTruncationError_{\indexSpace}^{\indexTime}$ solution of \eqref{eq:zeroInitialError}--\eqref{eq:schemeErrorInitial}--\eqref{eq:schemeErrorEventual} and let \Cref{ass:stableBulk} and \ref{ass:stableBC} hold.
    Let $\indexTime \gg 1$ and $\indexSpace\in\naturals$ such that $\indexSpace+\courantNumber\indexTime = \bigO{1}$.
    Then, a good approximation of $\globalTruncationError_{\indexSpace}^{\indexTime}$ is given by 
    \begin{multline}\label{eq:approximationFrontShockByAiry}
        \globalTruncationError_{\indexSpace}^{\indexTime} \sim 
        \courantNumber
        \Biggl( 
        \frac{(-1)^{\indexTime}}{1+\sum_{k\geq 0}\coefficientEventualBoundaryScheme_k - \sum_{k\geq 0}\coefficientEventualBoundarySchemeOld_k}
        -
        \frac{(-1)^{\indexSpace}}{1-\sum_{k\geq 0}(-1)^k\coefficientEventualBoundaryScheme_k - \sum_{k\geq 0}(-1)^k\coefficientEventualBoundarySchemeOld_k}
        \Biggr )\\
        \times
        \frac{1}{(\frac{\courantNumber}{2}(\courantNumber^2-1) \indexTime)^{1/3}} \textnormal{Ai}\Biggl ( \frac{\indexSpace + \courantNumber\indexTime}{(\frac{\courantNumber}{2}(\courantNumber^2-1) \indexTime)^{1/3}} \Biggr ),
    \end{multline}
    where the boundary-condition dependent terms enclosed in the parentheses are well-defined thanks to \Cref{ass:stableBC}.
\end{proposition}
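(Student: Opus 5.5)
We start from the contour representation \eqref{eq:definition_Error_LeapFrog}, $\globalTruncationError_{\indexSpace}^{\indexTime}=\oint_C g(\timeShiftOperator)\timeShiftOperator^{\indexTime}\stableRoot(\timeShiftOperator)^{\indexSpace}\differential{\timeShiftOperator}$, with $C$ a circle of radius slightly larger than one. Following \Cref{lemma:saddlePoints}, for $\nu=-\courantNumber$ the only relevant critical points are the degenerate double saddle points $\pm1$. They are away from the branch points $e^{\pm i\branchPointAngle}$, $-e^{\pm i\branchPointAngle}$, and by \Cref{ass:stableBC} $g$ is holomorphic in a neighbourhood of $\closedNeighborhoodInfinity$. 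We deform $C$ into a contour lying in $\closedNeighborhoodInfinity$, close to $\unitCircle$, that passes through $+1$ and $-1$ along the steepest descent directions of a cubic phase, which are at angles $\pm\pi/3$ or $\pm 2\pi/3$ relative to the imaginary direction. We check that $\textnormal{Re} f(\timeShiftOperator;\nu)<0$ uniformly on the rest of the contour, away from small neighbourhoods of $\pm1$. This should follow from $|\timeShiftOperator\stableRoot(\timeShiftOperator)^{\nu}|<1$ just outside $\unitCircle$ near the arcs that contain no saddle point. The remainder of the contour then contributes only exponentially small terms.

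Near $\timeShiftOperator=1$, write $\timeShiftOperator=e^{i\vartheta}$, or better $\timeShiftOperator=e^{s}$ with $s$ small. Using $\stableRoot(1)=-1$ and the branch with $\stableRoot\in\unitDisk$ on $\neighborhoodInfinity$, we expand $\log\stableRoot(e^{s})=i\pi+\alpha_1 s+\alpha_3 s^3+\bigO{s^5}$. Oddness comes from the symmetry of the characteristic equation under $\timeShiftOperator\mapsto\timeShiftOperator^{-1}$, $\fourierShift\mapsto-\fourierShift^{-1}$. From the characteristic equation $\timeShiftOperator-\timeShiftOperator^{-1}=\courantNumber(\fourierShift^{-1}-\fourierShift)$ we get $\alpha_1=1/\courantNumber$. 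This is consistent with $\nu=-\courantNumber$ being the group velocity, and it gives $\alpha_3=\tfrac{1}{6}(1-\courantNumber^2)/\courantNumber^3$ up to sign conventions to be checked. Then
\[
\indexTime f=\indexTime s+\indexSpace\log\stableRoot=i\pi\indexSpace+\frac{(\indexSpace+\courantNumber\indexTime)}{\courantNumber}\,s+\indexSpace\alpha_3 s^3+\dots
\]
Since $\indexSpace=-\courantNumber\indexTime+\bigO{1}$, the cubic coefficient is $-\courantNumber\indexTime\alpha_3=\tfrac{1}{6}(\courantNumber^2-1)\indexTime/\courantNumber^2$. The linear coefficient is $\bigO{1}$. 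Freezing $g(\timeShiftOperator)\,\differential{\timeShiftOperator}\approx g(1)\,\differential{s}$ and rescaling $s$ by $\indexTime^{-1/3}$ gives the standard Airy integral $\frac{1}{2\pi i}\int e^{t^3/3-xt}\differential{t}=\textnormal{Ai}(x)$ along the appropriate contour. This produces $(\tfrac{\courantNumber}{2}(\courantNumber^2-1)\indexTime)^{-1/3}\textnormal{Ai}\bigl((\indexSpace+\courantNumber\indexTime)/(\tfrac{\courantNumber}{2}(\courantNumber^2-1)\indexTime)^{1/3}\bigr)$ times $(-1)^{\indexSpace}$ and a constant. The constant is $2\pi i\,g(1)\times(\text{Jacobian})$, and evaluating it with $\stableRoot(1)=-1$ gives the $(-1)^{\indexSpace}$ term of \eqref{eq:approximationFrontShockByAiry}, up to the factor $\courantNumber$ coming from $\differential{s}$ versus the $\stableRoot$-scaling. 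The point $\timeShiftOperator=-1$ is handled the same way. There $\stableRoot(-1)=1$ and $\timeShiftOperator^{\indexTime}=(-1)^{\indexTime}e^{\indexTime s}$, and $g(-1)$ yields the denominator $1+\sum\coefficientEventualBoundaryScheme_k-\sum\coefficientEventualBoundarySchemeOld_k$ together with the $(-1)^{\indexTime}$ term. Both denominators are nonzero by \Cref{ass:stableBC}, since $\pm1\in\unitCircle$.

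To make the argument rigorous, we use the Chester–Friedman–Ursell change of variables \cite{chester1957extension}. Because the linear coefficient is only $\bigO{1}$ relative to $\indexTime$, a plain local cubic truncation suffices at leading order. The errors from the $s^5$ term and from $g(e^s)-g(1)=\bigO{s}$ give corrections of relative size $\indexTime^{-1/3}$, of the form $\indexTime^{-2/3}\textnormal{Ai}'$, as in \cite{wong2001asymptotic}. The main obstacle is the global contour deformation. We must verify that a single admissible path inside the region of convergence connects the two local Airy contours at $\pm1$ while avoiding the branch cuts of \Cref{lemma:BranchPoints}, and that $\textnormal{Re} f<0$ uniformly on the connecting arcs. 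We would first try circle arcs of radius $1+\delta\indexTime^{-1/3}$ joined to the local steepest descent rays, and bound $|\timeShiftOperator\stableRoot^{\nu}|$ there by the monotonicity of $|\stableRoot|$ in $|\timeShiftOperator|$. The other delicate point is carefully tracking the signs and branch in $\alpha_3$ and in the orientation, so that the prefactors come out exactly as stated.
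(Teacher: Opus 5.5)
Your plan is essentially the paper's argument. Both analyse the two degenerate saddle points $\pm1$ locally, where the phase is cubic, obtain Airy integrals, and cite Chester--Friedman--Ursell for rigour. The paper formally computes the coalescing saddle points, Wong's $\zeta(\alpha)$ and $\eta(\alpha)$, and Bleistein's $a_0(\alpha)$, and then truncates all of them at leading order. Your route expands directly in $s=\log\timeShiftOperator$ and keeps the $\bigO{1}$ drift $(\indexSpace+\courantNumber\indexTime)s/\courantNumber$ in the exponent. This reaches the same result more economically. It is also the natural argument in the window $\indexSpace+\courantNumber\indexTime=\bigO{1}$: there the two saddle points lie at distance $\bigO{\indexTime^{-1/2}}$, well inside the $\indexTime^{-1/3}$ Airy scale, so no uniform mapping is needed. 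Your global-contour discussion goes beyond what the paper provides.

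You must correct $\alpha_3$; its sign is not a convention.
\begin{itemize}
\item Write $\stableRoot(e^s)=-e^{w}$. The characteristic equation becomes $\sinh w=\sinh(s)/\courantNumber$, so $w=\textnormal{arcsinh}(\sinh(s)/\courantNumber)=s/\courantNumber+\tfrac{1}{6}(\courantNumber^2-1)\courantNumber^{-3}s^3+\bigO{s^5}$. Hence $\alpha_3=(\courantNumber^2-1)/(6\courantNumber^3)$, the opposite of your value.
\item The cubic coefficient of $\indexTime f$ is therefore $+(1-\courantNumber^2)\indexTime/(6\courantNumber^2)>0$. This agrees with the paper's expansion $f=i\pi\frac{p}{q}-\frac16\frac{p^2-q^2}{p^2}(\timeShiftOperator-1)^3+\dots$.
\item Oddness can be read off this formula. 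The underlying symmetry is $(\timeShiftOperator,\fourierShift)\mapsto(\timeShiftOperator^{-1},\fourierShift^{-1})$. The map $\fourierShift\mapsto-\fourierShift^{-1}$ that you wrote swaps the two roots and does not preserve the equation.
\item With your sign, reaching $\frac{1}{2\pi i}\int e^{t^3/3-xt}\differential{t}$ forces $s\propto -t$ and produces $\textnormal{Ai}(-x)$. That means oscillations ahead of the front, contradicting the exponential decay there. The descent rays would also be $\arg s=\pm 2\pi/3$, pointing into $\unitDisk$.
\item With the correct sign, the rays are $\arg s=\pm\pi/3$. In the $\timeShiftOperator$-plane these are $\pm\pi/3$ at $\timeShiftOperator=1$ and $\pm2\pi/3$ at $\timeShiftOperator=-1$, measured from the real axis, not the imaginary one. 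They lie in $\closedNeighborhoodInfinity$, as your plan requires.
\item The substitution $s=(2\courantNumber^2/((1-\courantNumber^2)\indexTime))^{1/3}t$ then gives exactly the stated Airy argument. The factor $\courantNumber$ comes from the identity $(2\courantNumber^2/((1-\courantNumber^2)\indexTime))^{1/3}=-\courantNumber(\tfrac{\courantNumber}{2}(\courantNumber^2-1)\indexTime)^{-1/3}$.
\item The relative sign between the two terms comes from $\differential{\timeShiftOperator}=-e^{s}\differential{s}$ at $\timeShiftOperator=-e^{s}$.
\end{itemize}

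On the connecting arcs, monotonicity of $|\stableRoot|$ alone will not do. We have $|\stableRoot|=1$, and hence $\textnormal{Re}\, f=0$, on the whole arcs $|\sin\vartheta|\le|\courantNumber|$ of $\unitCircle$ around $\pm1$. What works is the radial derivative:
\begin{equation*}
\partial_r\textnormal{Re}\, f(re^{i\vartheta};-\courantNumber)\big|_{r=1}=1-\frac{\cos\vartheta}{\sqrt{1-\sin^2\vartheta/\courantNumber^2}}<0
\qquad\text{for } 0<|\vartheta|<\branchPointAngle,
\end{equation*}
and symmetrically near $-1$. On the remaining arcs of $\unitCircle$ one already has $|\stableRoot|<1$.
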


Let us now discuss the origin of \eqref{eq:approximationFrontShockByAiry} thanks to elementary computations with a steepest descent approximation featuring \strong{degenerate saddle points}.
Assume that $\courantNumber = -\frac{p}{q}\in\mathbb{Q}$ where $p, q \in\naturals^{*}$ are coprime.
\Cref{ass:stableBulk} gives $p<q$.
In this way, $|\courantNumber|\indexTime\in\naturals$ whenever $\indexTime\in q\naturals$.
We thus pose $\indexTime=q\tilde{\indexTime}$ with $\tilde{\indexTime}\in\naturals$ and $\indexSpace = p\tilde{\indexTime}$.
Inserting into \eqref{eq:approximationFrontShockByAiry}, we obtain 
\begin{equation*}
        \globalTruncationError_{p\tilde{\indexTime}}^{q\tilde{\indexTime}} \sim 
        -\frac{p}{q}
        \Biggl( 
        \frac{(-1)^{q\tilde{\indexTime}}}{1+\sum_{k\geq 0}\coefficientEventualBoundaryScheme_k - \sum_{k\geq 0}\coefficientEventualBoundarySchemeOld_k}
        -
        \frac{(-1)^{p\tilde{\indexTime}}}{1-\sum_{k\geq 0}(-1)^k\coefficientEventualBoundaryScheme_k - \sum_{k\geq 0}(-1)^k\coefficientEventualBoundarySchemeOld_k}
        \Biggr )
        \frac{1}{(-\frac{p}{2}(\frac{p^2}{q^2}-1) \tilde{\indexTime})^{1/3}} \textnormal{Ai}(0),
\end{equation*}
where $\coefficientEventualBoundaryScheme_k$ and $\coefficientEventualBoundarySchemeOld_k$ might also depend on $p$ and $q$ due to the possible dependence of the boundary scheme in the Courant number $\courantNumber$.
Setting $\nu = \indexSpace/\indexTime = p/q = -\courantNumber$, the saddle points of $f(\timeShiftOperator; \nu)$ are $\timeShiftOperator=\pm 1$, according to \Cref{lemma:saddlePoints}.
Taylor expansions around the saddle points give 
\begin{equation*}
    f(\timeShiftOperator; p/q)
    =
    i\pi\frac{p}{q}
    -\tfrac{1}{6}\frac{p^2-q^2}{p^2}  
    (\timeShiftOperator-1)^3 + \bigO{(\timeShiftOperator-1)^4}
    \quad \text{and}\quad 
    f(\timeShiftOperator; p/q)
    =
    i\pi
    +\tfrac{1}{6}\frac{p^2-q^2}{p^2} 
    (\timeShiftOperator+1)^3 + \bigO{(\timeShiftOperator+1)^4},
\end{equation*}
where the absence of second-order term indicates that we face  degenerate saddle points.
Let us now find (local) directions of steepest descent for each saddle point, and derive the approximation $\globalTruncationError_{p\tilde{\indexTime}}^{q\tilde{\indexTime}} \sim \mathscr{I}_{1}^{\tilde{\indexTime}} + \mathscr{I}_{-1}^{\tilde{\indexTime}} $, where $\mathscr{I}_{1}^{\tilde{\indexTime}} $ (respectively, $\mathscr{I}_{-1}^{\tilde{\indexTime}} $) is the contribution from the saddle point at $\timeShiftOperator = 1$ (respectively, at $\timeShiftOperator = -1$).
\begin{itemize}
    \item Consider the neighborhood of $\timeShiftOperator = 1$.
    Let $\rho>0$ and write 
    \begin{multline*}
        f(1+\rho e^{i\varphi}; p/q) = i\pi\frac{p}{q}
    -\tfrac{1}{6}\frac{p^2-q^2}{p^2}  
    \rho^3 e^{3i\varphi} + \bigO{\rho^4},
        \\
        \text{hence}\qquad 
        \begin{cases*}
            \textnormal{Re}(f(1+\rho e^{i\varphi}; p/q) ) =
    -\tfrac{1}{6} \frac{p^2-q^2}{p^2}  
    \rho^3 \cos(3\varphi) + \bigO{\rho^4}, \\
            \textnormal{Im}(f(1+\rho e^{i\varphi}; p/q) ) = \pi\frac{p}{q}
    -\tfrac{1}{6} \frac{p^2-q^2}{p^2}  
    \rho^3 \sin(3\varphi) + \bigO{\rho^4}.
        \end{cases*}
    \end{multline*}
    Taking into account that $-\tfrac{1}{6} \frac{p^2-q^2}{p^2}  
    \rho^3>0$, we look for rapid decay of the real part, hence $\cos(3\varphi) = -1$, and lack of oscillations, so $\sin(3\varphi) = 0$.
    This entails $\varphi = \frac{\pi}{3}(2k + 1)$ with $k\in\relatives$. We take $k = -1$, hence $\varphi = -\frac{\pi}{3}$ and $k = 0$, hence $\varphi = \frac{\pi}{3}$.
    We thus obtain, injecting the truncated third-order expansion into \eqref{eq:definition_Error_LeapFrog} where the path has been deformed to pass through the saddle point with the requested angles
    \begin{align*}
        \mathscr{I}_1^{\tilde{\indexTime}}&= g(1) e^{\tilde{\indexTime}i\pi p } \Biggl ( 
        \int_R^0 \textnormal{exp}\Bigl ( \tilde{\indexTime}\frac{q}{6p^2}(p^2-q^2)\rho^3 \Bigr ) e^{-i\frac{\pi}{3}}\differential{\rho} +  
        \int_0^R \textnormal{exp}\Bigl ( \tilde{\indexTime}\frac{q}{6p^2}(p^2-q^2)\rho^3 \Bigr ) e^{i\frac{\pi}{3}}\differential{\rho}\Biggr )\\
        &\sim\frac{(-1)^{p\tilde{\indexTime}}}{1-\sum_{k\geq 0}(-1)^k\coefficientEventualBoundaryScheme_k - \sum_{k\geq 0}(-1)^k\coefficientEventualBoundarySchemeOld_k}
        \frac{\sqrt{3}}{2\pi} \Bigl (\tilde{\indexTime}\frac{q}{6p^2}(q^2-p^2)\Bigr)^{-1/3}
        \int_0^{+\infty}e^{-\rho^3}\differential{\rho}\\
        &=\frac{p}{q} \frac{(-1)^{p\tilde{\indexTime}}}{1-\sum_{k\geq 0}(-1)^k\coefficientEventualBoundaryScheme_k - \sum_{k\geq 0}(-1)^k\coefficientEventualBoundarySchemeOld_k}
        \frac{1}{(-\frac{p}{2}(\frac{p^2}{q^2}-1) \tilde{\indexTime})^{1/3}} \underbrace{\frac{1}{3^{2/3}\Gamma(\frac{2}{3})}}_{=\textnormal{Ai}(0)},
    \end{align*}
    where the approximate equality is obtained by letting $R\to+\infty$  and considering a change of variable in the integral.
    The last equality relies on the fact that $\int_{0}^{+\infty}e^{-\rho^3}\differential{\rho}=\Gamma(4/3) = 2\pi/({3\sqrt{3}\Gamma(2/3)})$, using the Euler's reflection formula for the Gamma function, and on straightforward rearrangements of the terms.
    This equation is---without much surprise---\cite[Chapter VII, Equation (4.5)]{wong2001asymptotic}.
    \item For the neighborhood of $\timeShiftOperator = -1$, computations are analogous except for the fact that the directions of steepest descent are along 
        \begin{equation}\label{eq:anglesSteepestDescentZM1}
        \varphi = -\frac{2\pi}{3}
        \qquad \text{and}\qquad
        \varphi = \frac{2\pi}{3}
    \end{equation}
    due to the fact that the third-order term in the Taylor expansion has opposite sign compared to $\timeShiftOperator = 1$.
    Analogous computations yield 
    \begin{equation*}
        \mathscr{I}_{-1}^{\tilde{\indexTime}} \sim 
        -\frac{p}{q}
        \frac{(-1)^{q\tilde{\indexTime}}}{1+\sum_{k\geq 0}\coefficientEventualBoundaryScheme_k - \sum_{k\geq 0}\coefficientEventualBoundarySchemeOld_k}
        \frac{1}{(-\frac{p}{2}(\frac{p^2}{q^2}-1) \tilde{\indexTime})^{1/3}} \textnormal{Ai}(0).
\end{equation*}
\end{itemize}

\subsubsection{Unstable boundary conditions}

In \Cref{prop:nearWallUnstable} and \ref{prop:transitionUnstable} below,  the leading-order contribution in the asymptotic is simply given by the residue of $g(\timeShiftOperator)e^{\indexTime f(\timeShiftOperator; \nu)}$ at its simple pole $\timeShiftOperator=-1$.
The third one features this residue with a ``distortion'' induced by saddle points of $f(\timeShiftOperator; \nu)$ coalescing to this pole.
The residue is as follows.
\begin{lemma}[Residue of $g(\timeShiftOperator)$ at $\timeShiftOperator = -1$]
    Let \Cref{ass:stableBulk} and \ref{ass:unstableBC} hold.
    Then 
    \begin{equation}
    \label{eq:residue_g}
        2\pi i \, \textnormal{Res}_{-1}
        [g(\timeShiftOperator)]
        =-
        \Bigl (2
            + \sum_{k\geq 0}
            \coefficientEventualBoundaryScheme_k
            + \frac{1}{\courantNumber} \sum_{k\geq 0}
            k (\coefficientEventualBoundaryScheme_k-\coefficientEventualBoundarySchemeOld_k)\Bigr )^{-1},
    \end{equation}
    which is well-defined thanks to \eqref{eq:simpleZeroAtMinusOne}.
\end{lemma}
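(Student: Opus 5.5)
The identity follows from the elementary formula for the residue at a simple pole. The only real work is to evaluate the derivative of the denominator of $g$ at $\timeShiftOperator=-1$, using implicit differentiation of the characteristic equation of the bulk scheme. Set
\begin{equation*}
    D(\timeShiftOperator) \definitionEquality \timeShiftOperator^2 - \timeShiftOperator \sum_{k\in\naturals}\coefficientEventualBoundaryScheme_k \stableRoot(\timeShiftOperator)^{k} - \sum_{k\in\naturals}\coefficientEventualBoundarySchemeOld_k \stableRoot(\timeShiftOperator)^k,
    \qquad\text{so that}\qquad g = (2\pi i)^{-1}D^{-1}.
\end{equation*}

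\emph{Analyticity of $D$ near $-1$.} By \Cref{lemma:BranchPoints}, the branch points of $\stableRoot$ are $\pm e^{\pm i\branchPointAngle}$. Since $\courantNumber\neq 0$, we have $\branchPointAngle>0$, so $\timeShiftOperator=-1$ is not a branch point. Hence $\stableRoot$, defined on $\neighborhoodInfinity$, extends analytically to a neighbourhood of $-1$, and so does $D$. \Cref{ass:unstableBC} states that $-1$ is a simple zero of $D$, so $g$ has a simple pole there with
\begin{equation*}
    2\pi i\,\textnormal{Res}_{-1}[g] = \frac{1}{D'(-1)}.
\end{equation*}

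\emph{Values of $\stableRoot$ and $\stableRoot'$ at $-1$.} From \Cref{lemma:saddlePoints} (front-zone item) we have $\stableRoot(-1)=1$. One can also check this directly from the explicit formula: the square root equals $2|\courantNumber|$ at $\timeShiftOperator=-1$, and we take the determination consistent with $\stableRoot\in\unitDisk$ on $\neighborhoodInfinity$. To get $\stableRoot'(-1)$, differentiate the characteristic identity
\begin{equation*}
    \timeShiftOperator^2-\courantNumber\timeShiftOperator\bigl(\stableRoot^{-1}-\stableRoot\bigr)-1=0
\end{equation*}
with respect to $\timeShiftOperator$:
\begin{equation*}
    2\timeShiftOperator - \courantNumber\bigl(\stableRoot^{-1}-\stableRoot\bigr) + \courantNumber\timeShiftOperator\bigl(\stableRoot^{-2}+1\bigr)\stableRoot' = 0 .
\end{equation*}
At $\timeShiftOperator=-1$, $\stableRoot=1$, the middle term vanishes and this becomes $-2-2\courantNumber\stableRoot'(-1)=0$. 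Hence $\stableRoot'(-1)=-1/\courantNumber$, which is finite because $\courantNumber\neq0$.

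\emph{Computation of $D'(-1)$.} Differentiating $D$ gives
\begin{equation*}
    D'(\timeShiftOperator)=2\timeShiftOperator-\sum_k\coefficientEventualBoundaryScheme_k\stableRoot^k-\timeShiftOperator\sum_k k\coefficientEventualBoundaryScheme_k\stableRoot^{k-1}\stableRoot'-\sum_k k\coefficientEventualBoundarySchemeOld_k\stableRoot^{k-1}\stableRoot'.
\end{equation*}
Evaluating at $-1$ with $\stableRoot=1$ and $\stableRoot'=-1/\courantNumber$ yields
\begin{equation*}
    D'(-1)=-\Bigl(2+\sum_{k\geq0}\coefficientEventualBoundaryScheme_k+\frac{1}{\courantNumber}\sum_{k\geq0}k(\coefficientEventualBoundaryScheme_k-\coefficientEventualBoundarySchemeOld_k)\Bigr).
\end{equation*}
Inverting gives \eqref{eq:residue_g}.

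\emph{Nonvanishing of $D'(-1)$.} This is the second relation in \eqref{eq:simpleZeroAtMinusOne}, which is exactly the simplicity of the zero. As a consistency check, the first relation in \eqref{eq:simpleZeroAtMinusOne} is $D(-1)=1+\sum_k\coefficientEventualBoundaryScheme_k-\sum_k\coefficientEventualBoundarySchemeOld_k=0$.

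The only delicate point is the branch selection: one must confirm that the analytic continuation of $\stableRoot$ from $\neighborhoodInfinity$ to $\timeShiftOperator=-1$ gives $1$ and not the other root $-1$. I would settle this by noting that $\stableRoot(\timeShiftOperator)\sim-\courantNumber/\timeShiftOperator$ as $\timeShiftOperator\to\infty$, and continuing along the negative real axis from $-\infty$ to $-1$, which stays in $\closedNeighborhoodInfinity$ and avoids the branch cuts. On that path $\stableRoot$ is real and positive, because $\courantNumber<0$. Alternatively, one can simply cite the front-zone item of \Cref{lemma:saddlePoints}.
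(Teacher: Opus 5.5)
Your proof is correct and follows the paper's route: the paper records the first-order expansion $\stableRoot(\timeShiftOperator)^k = 1 - \frac{k}{\courantNumber}(\timeShiftOperator+1) + \bigO{(\timeShiftOperator+1)^2}$, which is exactly your $\stableRoot(-1)=1$ and $\stableRoot'(-1)=-1/\courantNumber$, and then reads off the residue at the simple pole. There is one small sign slip in your branch-selection aside: as $\timeShiftOperator\to\infty$ one has $\stableRoot(\timeShiftOperator)\sim\courantNumber/\timeShiftOperator$, not $-\courantNumber/\timeShiftOperator$, and it is this sign that makes $\stableRoot$ positive on the negative real axis when $\courantNumber<0$; your conclusion $\stableRoot(-1)=1$ is unaffected.
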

\begin{proof}
    Let $k \in\naturals$. We have that $\stableRoot(\timeShiftOperator)^k = 1 - k / \courantNumber(\timeShiftOperator + 1) + \bigO{(\timeShiftOperator+1)^2}$.
    This gives the claim.
\end{proof}

\begin{proposition}[Near-wall zone]\label{prop:nearWallUnstable}
    Consider $\globalTruncationError_{\indexSpace}^{\indexTime}$ solution of \eqref{eq:zeroInitialError}--\eqref{eq:schemeErrorInitial}--\eqref{eq:schemeErrorEventual} and let \Cref{ass:stableBulk} and \ref{ass:unstableBC} hold.
    Let $\indexSpace\in\naturals$ be fixed.
    In the limit $\indexTime\gg1$, a good approximation of $\globalTruncationError_{\indexSpace}^{\indexTime}$ is given by
    \begin{equation}\label{eq:nearWallUnstable}
        \globalTruncationError_{\,\indexSpace}^{\indexTime} = -(-1)^n \Bigl( 2 + \sum_{k\geq 0}\coefficientEventualBoundaryScheme_k + \frac{1}{\courantNumber}\sum_{k\geq 0}k(\coefficientEventualBoundaryScheme_k-\coefficientEventualBoundarySchemeOld_k)\Bigr)^{-1} + \tau_{\indexSpace}^{\indexTime},
    \end{equation}
    where the expression of $\tau_{\indexSpace}^{\indexTime}$ is given by the right-hand side of \eqref{eq:nearWallStable}, thus $\tau_{\indexSpace}^{\indexTime}=\bigO{\indexTime^{-3/2}}$.
\end{proposition}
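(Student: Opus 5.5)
We start from the representation \eqref{eq:definition_Error_LeapFrog} with $\nu=\indexSpace/\indexTime$ and $\indexSpace$ fixed, that is
\[
\globalTruncationError_{\indexSpace}^{\indexTime}=\oint_C g(\timeShiftOperator)\timeShiftOperator^{\indexTime}\stableRoot(\timeShiftOperator)^{\indexSpace}\,\differential{\timeShiftOperator},
\]
with $C$ a circle of radius $r>1$. Under \Cref{ass:unstableBC}, the only singularities of the integrand in $\closedNeighborhoodInfinity$ are the four branch points of \Cref{lemma:BranchPoints} on $\unitCircle$ and the simple pole of $g$ at $\timeShiftOperator=-1$. The pole does not coincide with any branch point, since $\branchPointAngle\in(0,\tfrac{\pi}{2})$ for $\courantNumber\neq0$. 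Moreover, $\stableRoot$ is analytic near $-1$ with $\stableRoot(-1)=1$.

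\textbf{Step 1: isolate the pole.} Let $\gamma_\epsilon$ be a small positively oriented circle around $-1$, excluding the branch points. We write
\[
\oint_C=\oint_{C'}+\oint_{\gamma_\epsilon},
\]
where $C'$ is $C$ with a small indentation (a keyhole) around $-1$. By the residue theorem,
\[
\oint_{\gamma_\epsilon}g(\timeShiftOperator)\timeShiftOperator^{\indexTime}\stableRoot(\timeShiftOperator)^{\indexSpace}\,\differential{\timeShiftOperator}
=2\pi i\,\textnormal{Res}_{-1}[g]\,(-1)^{\indexTime}\stableRoot(-1)^{\indexSpace}
=(-1)^{\indexTime}\,2\pi i\,\textnormal{Res}_{-1}[g].
\]
By \eqref{eq:residue_g}, this is exactly the first term in \eqref{eq:nearWallUnstable}, and it does not depend on $\indexSpace$.

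\textbf{Step 2: the remainder.} The integral over $C'$ is now the integral of a function analytic in $\closedNeighborhoodInfinity$ except at the four branch points. We treat it exactly as in the proof of \Cref{prop:nearWallStable}. Passing to $\timeShiftOperator^{-1}$ turns the $\timeShiftOperator$-transform into a generating function, and Flajolet--Odlyzko singularity analysis applies on a $\Delta$-domain around each branch point. At each branch point $e^{\pm i\branchPointAngle}$, $-e^{\pm i\branchPointAngle}$, the function $\stableRoot$ behaves like $\stableRoot_0+\stableRoot_1(1-\timeShiftOperator e^{\mp i\branchPointAngle})^{1/2}+\dots$, so $g\stableRoot^{\indexSpace}$ has a square-root singularity there. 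Each branch point therefore contributes a term of order $\indexTime^{-3/2}$, with the same coefficients as in \eqref{eq:nearWallStable}, because $g$ is the same function near the branch points.

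\textbf{Main obstacle.} Step 2 relies on local expansions that were derived in \Cref{prop:nearWallStable} under \Cref{ass:stableBC}, which we no longer have. We need that the values of $1-\mathscr{G}_+^0(\branchPointAngle)$ and $1+\mathscr{G}_-^0(-\branchPointAngle)$ appearing in the denominators are nonzero. This holds because, under \Cref{ass:unstableBC}, the denominator of $g$ vanishes on $\unitCircle$ only at $-1$, and $-1$ is not a branch point. The expansions are purely local at the branch points, so they go through unchanged.

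We also need to check that the indentation around the isolated pole creates no extra contribution. This follows from a $\Delta$-domain argument: the pole at $-1$ has been removed, and what is left is analytic in a slightly larger region away from the branch points, so the remaining part of the contour contributes only exponentially small terms. Combining the two steps gives $\tau_{\indexSpace}^{\indexTime}$ equal to the right-hand side of \eqref{eq:nearWallStable}, hence $\tau_{\indexSpace}^{\indexTime}=\bigO{\indexTime^{-3/2}}$.
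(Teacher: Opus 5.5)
Your proposal is correct and follows the paper's proof. You split the contour as $\oint_C=\oint_{\gamma_\varepsilon}+\oint_{\tilde C\smallsetminus\gamma_\varepsilon}$, evaluate the simple pole at $\timeShiftOperator=-1$ by the residue theorem using $\stableRoot(-1)=1$ and \eqref{eq:residue_g}, and treat the remaining integral, whose only dominant singularities are the four branch points, exactly as in \Cref{prop:nearWallStable}. Your extra checks are details the paper leaves implicit, and they are what makes reusing the stable-case expansion legitimate: the nonvanishing of $1-\mathscr{G}_+^0(\branchPointAngle)$ and $1+\mathscr{G}_-^0(-\branchPointAngle)$, and the analyticity of the remainder in a $\Delta$-domain.
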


\begin{proposition}[Transition zone]\label{prop:transitionUnstable}
    Consider $\globalTruncationError_{\indexSpace}^{\indexTime}$ solution of \eqref{eq:zeroInitialError}--\eqref{eq:schemeErrorInitial}--\eqref{eq:schemeErrorEventual} and let \Cref{ass:stableBulk} and \ref{ass:unstableBC} hold.
    Let $0< \indexSpace < |\courantNumber|\indexTime$ so that $\nu = \frac{\indexSpace}{\indexTime} \in (0, |\courantNumber|)$. Then, in the limit $\indexTime\gg 1$, $\globalTruncationError_{\indexSpace}^{\indexTime}$ is well approximated by 
    \begin{equation}\label{eq:unstable_error_transition}
    \globalTruncationError_{\,\indexSpace}^{\indexTime} = -(-1)^n \Bigl(2 + \sum_{k\geq 0}\coefficientEventualBoundaryScheme_k + \frac{1}{\courantNumber}\sum_{k\geq 0}k(\coefficientEventualBoundaryScheme_k-\coefficientEventualBoundarySchemeOld_k)\Bigr)^{-1} + \tau_{\indexSpace}^{\indexTime},
    \end{equation}
    where the expression of $\tau_{\indexSpace}^{\indexTime}$ is given by the right-hand side of \eqref{eq:expansionBeyndShock}, thus $\tau_{\indexSpace}^{\indexTime}=\bigO{\indexTime^{-1/2}}$.
\end{proposition}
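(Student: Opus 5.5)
The starting point is the contour representation \eqref{eq:definition_Error_LeapFrog}. Here $\nu=\indexSpace/\indexTime\in(0,|\courantNumber|)$ is fixed, the original contour $C$ lies in $\neighborhoodInfinity$, and under \Cref{ass:unstableBC} the function $g$ is meromorphic near $\closedNeighborhoodInfinity$ away from the four branch points of \Cref{lemma:BranchPoints}. Its only singularity in $\closedNeighborhoodInfinity$ apart from these branch points is a simple pole at $\timeShiftOperator=-1$. By \Cref{lemma:saddlePoints}, the saddle points for this $\nu$ are the four non-degenerate points $e^{\pm i\saddlePointAngle}$ and $e^{\pm i(\saddlePointAngle-\pi)}$ on $\unitCircle$, with $\saddlePointAngle\in(0,\branchPointAngle)$. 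In particular none of them coincides with $-1$; this is the key difference from the front zone, where the saddle points coalesce at $\pm1$.

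I will deform $C$ into the steepest-descent contour $C'$ used in the proof of \Cref{prop:transitionZoneStable}. That contour passes through the four saddle points and otherwise runs through regions where $\textnormal{Re}\, f(\cdot;\nu)<0$, staying away from the branch cuts. The new ingredient is that $C'$ must pass on the inner side of $\timeShiftOperator=-1$. This is possible because, near $-1$, we have $|\timeShiftOperator|=1$ and $|\stableRoot(-1)|=|1|=1$, so $\textnormal{Re}\, f(-1;\nu)=0$. Moreover, a small indentation into $\unitDisk$ around $-1$ keeps $\textnormal{Re} f\le 0$ at leading order. I still need to check this direction: it amounts to the sign of $\textnormal{Re} f$ along the real axis just inside $-1$, and I expect $1+\nu\,\partial\log\stableRoot$ to give a negative contribution there when $\nu<|\courantNumber|$. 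Crossing the pole then produces, by the residue theorem,
\begin{equation*}
\globalTruncationError_{\indexSpace}^{\indexTime}=2\pi i\,\textnormal{Res}_{-1}[g]\,(-1)^{\indexTime}\stableRoot(-1)^{\indexSpace}+\oint_{C'}g(\timeShiftOperator)e^{\indexTime f(\timeShiftOperator;\nu)}\differential{\timeShiftOperator}.
\end{equation*}
Since $\stableRoot(-1)=1$ by \Cref{lemma:saddlePoints}, the residue lemma \eqref{eq:residue_g} turns the first term into exactly the constant in \eqref{eq:unstable_error_transition}. Only $\indexTime$ enters this term, and it contributes no factor in $\indexSpace$.

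For the integral over $C'$, I will argue as in \Cref{prop:transitionZoneStable}. The local contributions at the four non-degenerate saddle points are computed by standard Laplace/Gaussian approximation. They involve only $g$ evaluated at the saddle points, which is finite because the pole lies at positive distance. This reproduces \eqref{eq:expansionBeyndShock} and gives $\tau_{\indexSpace}^{\indexTime}=\bigO{\indexTime^{-1/2}}$. The rest of $C'$ contributes exponentially small terms, and the neighbourhoods of the branch points are not visited.

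The main obstacle is the global geometry of $C'$. I must show that the pole at $-1$ lies strictly between the steepest-descent arcs through $-e^{\pm i\saddlePointAngle}$, so that exactly one full residue is picked up, rather than half a residue or none. I also need $\textnormal{Re} f<\textnormal{Re} f(\text{saddle})=0$ everywhere else on $C'$. I would first establish this via the monotonicity of $\textnormal{Re} f$ along rays $r e^{i\vartheta}$ for $\vartheta$ near $\pi$, using the explicit formula for $\stableRoot$. The uniformity of the error terms in $\nu$ on compact subsets of $(0,|\courantNumber|)$ then follows, since the distance from $-1$ to the saddle points stays bounded below there.
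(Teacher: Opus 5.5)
Your proposal is correct and is essentially the paper's argument: the paper writes $\oint_C=\oint_{\gamma_\varepsilon}+\oint_{\tilde C\smallsetminus\gamma_\varepsilon}$. It takes the full residue at the simple pole $\timeShiftOperator=-1$ via \eqref{eq:residue_g} with $\stableRoot(-1)=1$, and estimates the remaining keyhole contour (which passes on the inner side of $-1$, as your $C'$ does) exactly as in \Cref{prop:transitionZoneStable}. The sign check you deferred does work out, a point the paper leaves to its figure: since $\stableRoot'(-1)=-1/\courantNumber$, one gets $\textnormal{Re}\, f(-1+\varepsilon;\nu)=-\varepsilon(1-\nu/|\courantNumber|)+\bigO{\varepsilon^2}<0$ for $\nu<|\courantNumber|$, so the valley near $-1$ is indeed inside the unit disk and the residue is picked up in full.
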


\begin{proposition}[Front zone]\label{prop:FrontUnstable}
     Consider $\globalTruncationError_{\indexSpace}^{\indexTime}$ solution of \eqref{eq:zeroInitialError}--\eqref{eq:schemeErrorInitial}--\eqref{eq:schemeErrorEventual} and let \Cref{ass:stableBulk} and \ref{ass:unstableBC} hold. Let $\indexTime \gg 1$ and $\indexSpace\in\naturals$ such that $\indexSpace+\courantNumber\indexTime = \bigO{1}$.
    Then, a good approximation of $\globalTruncationError_{\indexSpace}^{\indexTime}$ is given by
    \begin{equation}\label{eq:frontUnstable}
        \globalTruncationError_{\indexSpace}^{\indexTime} \sim 
        -(-1)^{\indexTime}
        \Bigl( 2 + \sum_{k\geq 0}\coefficientEventualBoundaryScheme_k + \frac{1}{\courantNumber}\sum_{k\geq 0}k(\coefficientEventualBoundaryScheme_k-\coefficientEventualBoundarySchemeOld_k)\Bigr)^{-1}
        \Biggl ( 
        \frac{1}{3} - \int_0^{\frac{\indexSpace + \courantNumber\indexTime}{(\frac{\courantNumber}{2}(\courantNumber^2-1) \indexTime)^{1/3}}}
        \textnormal{Ai}(y)\differential{y}
        \Biggr ).
    \end{equation}
\end{proposition}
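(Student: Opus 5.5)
The plan is to start from the integral representation \eqref{eq:definition_Error_LeapFrog}, $\globalTruncationError_{\indexSpace}^{\indexTime}=\oint_C g(\timeShiftOperator)e^{\indexTime f(\timeShiftOperator;\nu)}\differential{\timeShiftOperator}$ with $\nu=\indexSpace/\indexTime$ close to $|\courantNumber|$. By \Cref{ass:unstableBC}, $g$ has a simple pole at $\timeShiftOperator=-1$. By \Cref{lemma:saddlePoints}, this pole is also the point where the two saddle points $-e^{\pm i\saddlePointAngle}$ coalesce into a degenerate saddle as $\nu\to|\courantNumber|$.

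I would split $g(\timeShiftOperator)=\frac{r}{\timeShiftOperator+1}+h(\timeShiftOperator)$, with $r=\textnormal{Res}_{-1}[g]$ and $h$ regular at $-1$. The residue is given by \eqref{eq:residue_g}, so $2\pi i\, r$ equals the prefactor $-(2+\cdots)^{-1}$. The $h$-part has no pole on $\closedNeighborhoodInfinity$. It can therefore be treated exactly like the stable case of \Cref{prop:frontZoneStable}: a cubic steepest-descent computation at $\pm1$ gives a contribution $\bigO{\indexTime^{-1/3}}$, which is of lower order. All the work is thus in
\begin{equation*}
J\definitionEquality \frac{1}{2\pi i}\oint_C \frac{e^{\indexTime f(\timeShiftOperator;\nu)}}{\timeShiftOperator+1}\differential{\timeShiftOperator},
\end{equation*}
and the claim reduces to showing $J\sim(-1)^{\indexTime}\bigl(\tfrac13-\int_0^{x}\textnormal{Ai}(y)\differential{y}\bigr)$, where $x\definitionEquality(\indexSpace+\courantNumber\indexTime)/(\tfrac{\courantNumber}{2}(\courantNumber^2-1)\indexTime)^{1/3}$. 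The saddle at $+1$ contributes only $\bigO{\indexTime^{-1/3}}$ to $J$, since $1/(\timeShiftOperator+1)$ is regular there.

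Near $\timeShiftOperator=-1$, I would set $\timeShiftOperator=-1+w$. Using $\stableRoot(-1)=1$ and the Taylor expansions already computed for the front zone, I expect
\begin{equation*}
\indexTime f(-1+w;\nu)=i\pi\indexTime+\alpha(\indexSpace+\courantNumber\indexTime)w+\beta\indexTime w^3+\dots,
\end{equation*}
with $\alpha=-1/\courantNumber$, coming from $\stableRoot^{\indexSpace}\approx 1-\indexSpace w/\courantNumber$, and $\beta$ equal to the cubic coefficient found before. The quadratic term is negligible in the regime $\indexSpace+\courantNumber\indexTime=\bigO{1}$. The factor $e^{i\pi\indexTime}$ yields $(-1)^{\indexTime}$. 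Rescaling $w=s\,(\cdot\,\indexTime)^{-1/3}$ turns the local integral into
\begin{equation*}
\frac{1}{2\pi i}\int_{\Gamma}\frac{e^{xs-s^3/3}}{s}\differential{s}.
\end{equation*}
Here $\Gamma$ runs along the steepest-descent rays of \eqref{eq:anglesSteepestDescentZM1}, suitably rotated, and passes to the right of, i.e.\ outside, the pole $s=0$, because $C$ must stay in $\neighborhoodInfinity$.

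Call this function $\Phi(x)$. Differentiating in $x$ removes the $1/s$ factor and produces the Airy integral, so $\Phi'(x)=-\textnormal{Ai}(x)$, the sign being fixed by orientation. To pin down the integration constant, I would let $x\to+\infty$, where the contour can be pushed into the decay region and $\Phi\to0$. This corresponds to the ahead-of-the-front zone, which decays exponentially. It follows that $\Phi(x)=\int_x^{\infty}\textnormal{Ai}=\tfrac13-\int_0^x\textnormal{Ai}$. As a consistency check, for $x\to-\infty$ the value tends to $1$, which recovers the full residue and hence matches \Cref{prop:transitionUnstable}.

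The main obstacle is justifying the uniform approximation: the pole and the coalescing saddles sit at the same point, so standard steepest descent fails. I would handle this with a Chester--Friedman--Ursell cubic change of variables, combined with Bleistein's treatment of a pole at the saddle (see \cite{chester1957extension} and \cite[Chapter VII]{wong2001asymptotic}). Controlling the error terms and the contour tails away from $\pm1$, where $\textnormal{Re}(f)<0$, then gives the stated leading order.
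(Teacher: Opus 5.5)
Your proposal is correct, and up to the last step it follows the paper. You isolate the simple pole of $g$ at $z=-1$ and discard the regular part and the neighbourhood of $z=+1$ as $\mathcal{O}(n^{-1/3})$ contributions, exactly as in the stable front zone. You then reduce the pole part to a local cubic model. Your Taylor expansion at $z=-1$ has linear coefficient $-(j+\courantNumber n)/\courantNumber$, quadratic coefficient proportional to $j+\courantNumber n$, and cubic coefficient $n(\courantNumber^2-1)/(6\courantNumber^2)$ at leading order. This is precisely the leading-order content of the Chester--Friedman--Ursell map behind \eqref{eq:tmp1}, and your rescaling reproduces the paper's argument $x$.

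The genuine difference is how you evaluate the model integral. The paper writes $1/v=\int_0^{\infty}e^{-yv}\,dy$ and $(1-e^{-Xv})/v=\int_0^{X}e^{-yv}\,dy$, then exchanges the integrals to get $\int_0^\infty\textnormal{Ai}-\int_0^X\textnormal{Ai}$ in one stroke. There, the side of the pole is encoded tacitly in the requirement $\textnormal{Re}\,v>0$ on the contour. You instead differentiate in $x$ to get $\Phi'=-\textnormal{Ai}$, fix the constant by letting $x\to+\infty$, and check $x\to-\infty$ against \Cref{prop:transitionUnstable}. The paper's route is shorter. Yours makes the matching with the neighbouring zones explicit (full residue behind the front, decay ahead of it), and makes visible how the result depends on where the pole sits relative to the contour.

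That dependence is also where your write-up has a slip. With the integrand $e^{xs-s^3/3}/s$ you have $z+1=cs$ with $c>0$, so staying in $\mathbb{U}$ means $\textnormal{Re}\,s<0$ near the pole. Hence $\Gamma$ runs from $\infty e^{2\pi i/3}$ to $\infty e^{-2\pi i/3}$ along the rays of \eqref{eq:anglesSteepestDescentZM1}, with no rotation needed, and passes to the \emph{left} of $s=0$. It passes to the right only in the variable $t=-s$, which is the paper's $v$.

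This is not cosmetic. Your normalization $\Phi(+\infty)=0$ comes from deforming through the saddle $s=-\sqrt{x}$, where the exponent equals $-\tfrac{2}{3}x^{3/2}$, and that is valid only for the left-passing contour. A right-passing contour differs by the residue $1$ and would give $-\int_{-\infty}^{x}\textnormal{Ai}$. That would contradict the transition-zone limit. Your final formula and your $x\to-\infty$ check are those of the correct left-passing contour, so only the stated side of the pole needs correcting.
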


\begin{figure}
\begin{center}
    \begin{tikzpicture}[
        line cap=round, 
        line join=round,
        arrow inside/.style={
            postaction={decorate,decoration={
                markings,
                mark=at position #1 with {\arrow{Stealth[length=4pt, width=6pt, inset=1pt]}}
            }}
        }
    ]

    \coordinate (Pole) at (0,0);
    \def\eps{0.97}      
    \def\pathR{1.05}    
     \def\pathRR{1.13}    
    \def\off{0.02}     
    \def\armL{3.5}     
    \def\bigR{12.0}    
    \def\Lfig{2.0}     

    \clip (-\Lfig-1.2, -\Lfig-1.2) rectangle (\Lfig+1.2, \Lfig+1.2);

    \draw [line width=0.8pt, black] ($(Pole)+(\bigR, 0)$) ++(180-22:\bigR) arc (180-22:180+22:\bigR) node[near start, right]{$\unitCircle$};

    \draw [line width=1.1pt, color=ForestGreen, arrow inside=0.5] (Pole) circle (\eps);
    \fill [black] (Pole) circle (2pt);

    

    \coordinate (TopPoint) at ($(Pole) + (120:\pathR)$);
    \coordinate (BotPointBlue) at ($(Pole) + (230:\pathR)$);
    \coordinate (BotPointRed) at ($(Pole) + (240:\pathR)$);
    
    \coordinate (BlueTop) at ($(Pole) + (-118:\pathR+\armL)$);
    \coordinate (RedTop) at ($(TopPoint) + (210:\off)$);
    
    \coordinate (BlueBot) at ($(BotPointBlue) + (30:\off)$);
    \coordinate (RedBot) at ($(BotPointRed) + (210:\off)$);

    \draw [line width=1.3pt, color=NavyBlue, arrow inside=0.2, arrow inside=0.8, shift={(30:\off)}]
    ($(Pole) + (119.5:\pathR+\armL)$) -- ($(Pole) + (118:\pathR)$)
   arc[start angle=118, end angle=-118, radius=\pathR]
    -- ($(Pole)+(240.5:\pathR+\armL)$);

\draw[line width=1.3pt, color=SpringGreen, arrow inside=0.2, arrow inside=0.8, shift={(210:\off)}]
    ($(Pole)$) ++(122:\pathRR)
    arc[start angle=122, end angle=238, radius=\pathRR];

    \draw [line width=1.3pt, color=BrickRed, arrow inside=0.2, arrow inside=0.8, shift={(210:\off)}]
    ($(Pole) + (120.5:\pathR+\armL)$) -- ($(Pole) + (122:\pathR)$)
    arc[start angle=122, end angle=238, radius=\pathR]
    -- ($(Pole)+(239.5:\pathR+\armL)$);

    \draw [dashed, gray!80, thin] ($(Pole)+(0,-3.5)$) -- ($(Pole)+(0,3.5)$);
    \draw [line width=0.6pt, ->, >=stealth] (Pole) -- node[pos=0, left] {$-1$} node[midway, below] {$\varepsilon$} ($(Pole)+(-35:\eps)$);

        \node[color=ForestGreen] at ($(Pole)+(50:\eps+0.3)$) {$\gamma_\varepsilon$};
        \node[color=SpringGreen, left] at ($(Pole)+(150:\pathRR+0.3)$) {$\gamma_{\varepsilon}^{2/3\pi}$};
        \node[color=BrickRed, left] at ($(Pole)+(180:\pathR+0.3)$) {$C$};
        \node[color=NavyBlue, right] at ($(Pole)+(10:\pathR)$) {$\tilde{C}$};
        
        \draw[gray, thin] ($(Pole)+(0, 2.0)$) arc (90:118:2.0);
        \node[gray] at ($(Pole)+(105:2.2)$) {$\frac{\pi}{6}$};

    \end{tikzpicture}
        \begin{tikzpicture}[scale=0.75,
        line cap=round, line join=round,
        arrow inside blue/.style={
            postaction={decorate,decoration={markings,
                        mark=at position 0.45 with \arrow{Stealth[length=4pt, width=6pt, inset=1pt]},
                        mark=at position 0.85 with \arrow{Stealth[length=4pt, width=6pt, inset=1pt]}
                        },
              }
        },
        arrow inside red/.style={
            postaction={decorate,decoration={markings,
                        mark=at position 0.25 with \arrow{Stealth[length=4pt, width=6pt, inset=1pt]},
                        mark=at position 0.65 with \arrow{Stealth[length=4pt, width=6pt, inset=1pt]}
                        },
              }
        },
        arrow inside rev/.style={
            postaction={decorate,decoration={
                markings,
                mark=at position #1 with \arrow{Stealth[length=4pt, width=6pt, inset=1pt,  reversed]}}
            }
            }
    ]

    \def\xr{3} \def\yr{3}
    
    \draw [line width=0.8pt, color=black] (0,0) circle (3cm);
    \node[right, inner sep=2pt] at (10:2.3cm) {$\unitCircle$};
    
    \draw [shift={(0,2)}, line width=1.2pt, color=BrickRed, arrow inside red] 
        plot[domain=0.05:pi-0.05, variable=\t]({2.1*cos(\t r)},{2.1*sin(\t r)});
    
    \draw [shift={(-2,0)}, line width=1.2pt, color=BrickRed, arrow inside red] 
        plot[domain=0.5*pi+0.05:1.5*pi-0.05, variable=\t]({2.1*cos(\t r)},{2.1*sin(\t r)});
    
    \draw [shift={(2,0)}, line width=1.2pt, color=BrickRed, arrow inside red] 
        plot[domain=-0.5*pi+0.05:0.5*pi-0.05, variable=\t]({2.1*cos(\t r)},{2.1*sin(\t r)});
    
    \draw [shift={(0,-2)}, line width=1.2pt, color=BrickRed, arrow inside red] 
        plot[domain=pi+0.05:2*pi-0.05, variable=\t]({2.1*cos(\t r)},{2.1*sin(\t r)});

    \draw [shift={(0,1.5)}, line width=1.2pt, color=NavyBlue, arrow inside blue] 
        plot[domain=0.3:pi-0.3, variable=\t]({2.15*cos(\t r)},{2.15*sin(\t r)});
    
    \draw [shift={(0,-1.5)}, line width=1.2pt, color=NavyBlue, arrow inside blue] 
        plot[domain=pi+0.3:2*pi-0.3, variable=\t]({2.15*cos(\t r)},{2.15*sin(\t r)});
    
    \draw [shift={(1.5,0)}, line width=1.2pt, color=NavyBlue, arrow inside blue] 
        plot[domain=-0.5*pi+0.3:0.5*pi-0.3, variable=\t]({2.15*cos(\t r)},{2.15*sin(\t r)});

    \draw [line width=1.2pt, color=ForestGreen, arrow inside red=0.52] (-3,0) circle (0.5cm);

    \draw [shift={(-3,0)}, line width=1.2pt, color=NavyBlue, arrow inside rev=0.45] 
        plot[domain=-pi+0.15:pi-0.15, variable=\t]({0.6*cos(\t r)},{0.6*sin(\t r)});

    \draw [shift={(-1.8,0)}, line width=1.2pt, color=NavyBlue] 
        plot[domain=0.5*pi+0.15:pi-0.05, variable=\t]({2.1*cos(\t r)},{2.1*sin(\t r)});
        
    \draw [line width=1.2pt, color=NavyBlue] (-3.9,0.11) -- (-3.6,0.1);

    \draw [shift={(-1.8,0)}, line width=1.2pt, color=NavyBlue] 
        plot[domain=pi+0.05:1.5*pi-0.15, variable=\t]({2.1*cos(\t r)},{2.1*sin(\t r)});\
        
    \draw [line width=1.2pt, color=NavyBlue] (-3.9,-0.11) -- (-3.6,-0.1);

    \draw [line width=0.6pt, ->, >=stealth] (-3,0) -- node[pos=0, left] {\tiny $-1$} node[midway, below] {$\varepsilon$} (-2.6,0.3);

    \node (sp1) at ({2.11},{2.11}) {};
    \node (sp2) at ({-2.11},{-2.11}) {};
    \node (sp3) at ({2.11},{-2.11}) {};
    \node (sp4) at ({-2.11},{2.11}) {};
    
    \draw (sp1) node[] {$\bullet$} node[below left] {$e^{i\vartheta_{\textnormal{SP}}}$};
    \draw (sp2) node[] {$\bullet$} node[above right] {$-e^{i\vartheta_{\textnormal{SP}}}$};
    \draw (sp3) node[] {$\bullet$} node[above left] {$e^{-i\vartheta_{\textnormal{SP}}}$};
    \draw (sp4) node[] {$\bullet$} node[below right] {$-e^{-i\vartheta_{\textnormal{SP}}}$};
    
        \draw [fill=black] (-2.986,0) circle (1.5pt);
        \node[color=BrickRed] at (1.66,3.81) {$C$};
        \node[color=NavyBlue] at (3.2,0.63) {$\tilde{C}$};
        \node[color=ForestGreen] at (-2.3,-0.8) {$\gamma_\varepsilon$};

    \end{tikzpicture}
\end{center}
\caption{\label{fig:paths}Left: paths used in \eqref{eq:brokenIntegral}. Right: paths used in the proofs of \Cref{prop:transitionZoneStable}, \ref{prop:nearWallUnstable}, and \ref{prop:transitionUnstable}.}
\end{figure}

Comparing  \Cref{prop:FrontUnstable} to 
Results similar to \Cref{prop:FrontUnstable} hold with stable boundary conditions and constant boundary datum \cite{chin1975dispersion} or without boundary and with initial datum being a step function \cite{chin1978dispersion, bouche2003comparison}: they also feature the primitive of the Airy function.
We thus understand that the long-time behavior of our unstable boundary conditions with Dirac delta-datum is similar to the one of stable boundary conditions endowed with a ``resonant'' boundary datum $\propto (-1)^{\indexTime}$. 
Indeed, these are just two different ways of generating a simple pole in the function $g(\timeShiftOperator)$ at $\timeShiftOperator = -1$.

To easily illustrate the origin of \Cref{prop:FrontUnstable}, assume as in the stable case that $\courantNumber = -\frac{p}{q}\in\mathbb{Q}$ where $p, q \in\naturals^{*}$ and coprime.
We thus pose $\indexTime=q\tilde{\indexTime}$ with $\tilde{\indexTime}\in\naturals$ and $\indexSpace = p\tilde{\indexTime}$.
We select a deformation following the directions of steepest descent in \eqref{eq:anglesSteepestDescentZM1}, see \Cref{fig:paths} on the left.
We obtain 
\begin{equation}\label{eq:brokenIntegral}
    \oint_C g(\timeShiftOperator)e^{q\tilde{\indexTime}f(\timeShiftOperator; p/q)}\differential{\timeShiftOperator}  
    = \int_{\gamma_{\varepsilon}^{2/3\pi}}g(\timeShiftOperator)e^{q\tilde{\indexTime}f(\timeShiftOperator; p/q)}\differential{\timeShiftOperator} 
    + \oint_{C\smallsetminus \gamma_{\varepsilon}^{2/3\pi}} g(\timeShiftOperator)e^{q\tilde{\indexTime}f(\timeShiftOperator; p/q)}\differential{\timeShiftOperator}.
\end{equation}
The path of the second integral can be changed without changing the integral's value, as the singularity is not enclosed:
\begin{equation*}
    \oint_{C\smallsetminus \gamma_{\varepsilon}^{2/3\pi}} g(\timeShiftOperator)e^{q\tilde{\indexTime}f(\timeShiftOperator; p/q)}\differential{\timeShiftOperator} = 
    \oint_{\tilde{C}} g(\timeShiftOperator)e^{q\tilde{\indexTime}f(\timeShiftOperator; p/q)}\differential{\timeShiftOperator}.
\end{equation*}
Now, in the vicinity of $\timeShiftOperator = -1$, we write $g(\timeShiftOperator) = \textnormal{Res}_{-1}[g]\times (\timeShiftOperator + 1)^{-1}+g_{\textnormal{reg}}(\timeShiftOperator)$, where $g_{\textnormal{reg}}$ is regular. 
This entails that 
\begin{equation*}
    \oint_{C\smallsetminus \gamma_{\varepsilon}^{2/3\pi}} g(\timeShiftOperator)e^{q\tilde{\indexTime}f(\timeShiftOperator; p/q)}\differential{\timeShiftOperator} = 
    \oint_{\tilde{C}} g(\timeShiftOperator)e^{q\tilde{\indexTime}f(\timeShiftOperator; p/q)}\differential{\timeShiftOperator}
    =
    \oint_{\tilde{C}} g_{\textnormal{reg}}(\timeShiftOperator)e^{q\tilde{\indexTime}f(\timeShiftOperator; p/q)}\differential{\timeShiftOperator}
    =\bigO{\tilde{\indexTime}^{-1/3}},
\end{equation*}
where the last equality comes from the same arguments at the end of \Cref{sec:stable}: the path passes arbitrarily close to the saddle points $\timeShiftOperator = -1$ without enclosing it as a pole.
Regarding the contribution of $\gamma_{\varepsilon}^{2/3\pi}$, we treat $\timeShiftOperator = -1$ as a (partially) path-enclosed pole.
The fact that the pole is simple is particularly useful, as we can use \cite[Lemma 34.1]{agarwal2011introduction} which says that, since we make a turn in the counterclockwise sense $2\pi/3$ (a third of a tour) around the singularity, we have  
\begin{align*}
    \int_{\gamma_{\varepsilon}^{2/3\pi}}
    g(\timeShiftOperator)e^{q\tilde{\indexTime}f(\timeShiftOperator; p/q)}\differential{\timeShiftOperator} 
    &= 
    \frac{2\pi i}{3}\textnormal{Res}_{-1}[ g(\timeShiftOperator)e^{q\tilde{\indexTime}f(\timeShiftOperator; p/q)}]
    =
    \frac{2\pi i}{3} e^{q\tilde{\indexTime}f(-1; p/q)} \textnormal{Res}_{-1}[ g(\timeShiftOperator)]\\
    &=
    -\frac{(-1)^{q\tilde{\indexTime}}}{3}
        \Bigl (2
            + \sum_{k\geq 0}
            \coefficientEventualBoundaryScheme_k
            + \frac{1}{\courantNumber} \sum_{k\geq 0}
            k (\coefficientEventualBoundaryScheme_k-\coefficientEventualBoundarySchemeOld_k)\Bigr )^{-1},
\end{align*}
which is the expected approximation of $\globalTruncationError_{p\tilde{\indexTime}}^{q\tilde{\indexTime}}$ up to terms $\bigO{\tilde{\indexTime}^{-1/3}}$.
This computation reveals that the leading-order dynamics in this region are governed by the residue at the pole.
However, this contribution is \strong{shaded} by the fact that the deformed contour---required to step into the saddle point along steepest descent directions---only encompasses a \strong{portion of tour}.
The factor $\frac{1}{3}$ in front of the residue rings---rightly so---a bell concerning the primitive of the Airy function, as $\int_0^{+\infty}\textnormal{Ai}(\spaceVariable)\differential{\spaceVariable} = \tfrac{1}{3}$.
\begin{figure}
    \centering
    \includegraphics[width=1\textwidth]{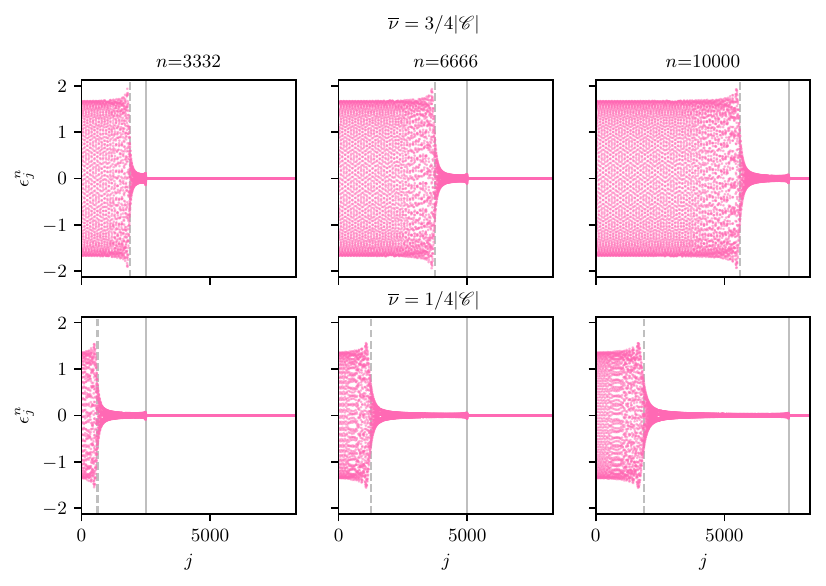}
    \caption{Illustration of $\globalTruncationError_{\indexSpace}^{\indexTime}$ obtained with boundary conditions generating instabilities propagating at an arbitrary group velocity $\overline{\nu}\in(0, |\courantNumber|]$, see \Cref{rem:anyGroupVelocity}.
    The full grey line corresponds to $\indexTime|\courantNumber|$, whereas the dashed one to $\indexTime\overline{\nu}$.}
    \label{fig:generalGroupVelocity}
\end{figure}

\begin{remark}[Instabilities propagating at any group velocity can be synthesized]\label{rem:anyGroupVelocity}
    The assumption on the pole of the boundary function $g(\timeShiftOperator)$ is considered for the sake of illustration.
    For instance, with the leap-frog scheme, every group velocity between $0$ and $|\courantNumber|$ is available due to the presence of associated saddle points.
    It is therefore easy to manufacture unstable boundary conditions where a pole of $g(\timeShiftOperator)$ coincides with a saddle point of $f(\timeShiftOperator; \overline{\nu})$ with associated group velocity $\overline{\nu}\in (0, |\courantNumber|]$.
    One can check that
    \begin{equation*}
        \coefficientEventualBoundaryScheme_0
        =
        2\sqrt{\frac{1-\courantNumber^2}{1-\overline{\nu}^2}}
        \qquad \text{and}\qquad 
        \coefficientEventualBoundarySchemeOld_0 = -1
    \end{equation*}
    finely does the job.
    An example with $\courantNumber=-\tfrac{3}{4}$ and $\overline{\nu} = \tfrac{1}{4}|\courantNumber|$ and $\overline{\nu} = \tfrac{3}{4}|\courantNumber|$ is given in \Cref{fig:generalGroupVelocity}.
\end{remark}

\subsection{A dissipative bulk scheme}
By using the $\timeShiftOperator$-transform to reinterpret \eqref{eq:schemeErrorEventualDissipative} as a recurrence in space, we obtain 
\begin{multline}\label{eq:ztransformedProblem_dissipative}
    \timeShiftOperator^2 \zTransformed{\globalTruncationError}_{0}(\timeShiftOperator) - \timeShiftOperator \sum_{k\in\naturals}\coefficientEventualBoundaryScheme_k \zTransformed{\globalTruncationError}_{k}(\timeShiftOperator) - \sum_{k\in\naturals}\coefficientEventualBoundarySchemeOld_k \zTransformed{\globalTruncationError}_{k}(\timeShiftOperator)= \timeShiftOperator
    \qquad \text{and} \\ 
    (\timeShiftOperator^2 +1-\omega)\zTransformed{\globalTruncationError}_{\indexSpace}(\timeShiftOperator) + \tfrac{1}{2}\left((\omega-2)-\omega \courantNumber \right)\timeShiftOperator\zTransformed{\globalTruncationError}_{\indexSpace-1}(\timeShiftOperator) + \tfrac{1}{2}\left((\omega-2)+\omega \courantNumber \right) \timeShiftOperator\zTransformed{\globalTruncationError}_{\indexSpace+1}(\timeShiftOperator) = 0, \quad \indexSpace\geq 1.
\end{multline}
In this case, it is sufficient to adapt the expression of $\stableRoot(\timeShiftOperator)$, keeping the expressions of $g(\timeShiftOperator)$ and $f(\timeShiftOperator; \nu)$ given in \eqref{eq:definitionFandG} unchanged.
We have
\begin{lemma}[Branch points of $\zTransformed{\globalTruncationError}_{\indexSpace}(\timeShiftOperator)$]
    Let $\relaxationParameter\in (0, 2)$ and $|\courantNumber|< 1$, then the function $\complex\ni \timeShiftOperator\mapsto\zTransformed{\globalTruncationError}_{\indexSpace}(\timeShiftOperator)$ for \eqref{eq:zeroInitialErrorDissipative}--\eqref{eq:schemeErrorInitialDissipative}--\eqref{eq:schemeErrorEventualBoundaryDissipative}--\eqref{eq:schemeErrorEventualDissipative} has, except when $\relaxationParameter = \frac{2}{1+\courantNumber}$, branch points corresponding to the zeros of $\timeShiftOperator\mapsto\timeShiftOperator^4 + ((\courantNumber^2-1)\relaxationParameter^2 + 2(\relaxationParameter-1))\timeShiftOperator^2 + (\relaxationParameter-1)^2$, which lay in $\unitDisk$.
\end{lemma}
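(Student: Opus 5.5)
The plan is to read the branch points off the discriminant of the characteristic equation of the bulk recurrence in \eqref{eq:ztransformedProblem_dissipative}, and then to locate them with a Schur--Cohn (Jury) type criterion applied in the variable $w=\timeShiftOperator^2$.

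\emph{Step 1: the quadratic in $\fourierShift$ and the exceptional case.} Substituting $\zTransformed{\globalTruncationError}_{\indexSpace}(\timeShiftOperator)=\fourierShift^{\indexSpace}$ into the second equation of \eqref{eq:ztransformedProblem_dissipative} and multiplying by $\fourierShift$ gives
\begin{equation*}
\tfrac{1}{2}\bigl((\relaxationParameter-2)+\relaxationParameter\courantNumber\bigr)\timeShiftOperator\,\fourierShift^2+(\timeShiftOperator^2+1-\relaxationParameter)\,\fourierShift+\tfrac{1}{2}\bigl((\relaxationParameter-2)-\relaxationParameter\courantNumber\bigr)\timeShiftOperator=0 .
\end{equation*}
The leading coefficient vanishes identically in $\timeShiftOperator$ exactly when $\relaxationParameter(1+\courantNumber)=2$. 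In that case the equation is linear in $\fourierShift$, so $\stableRoot$ is rational in $\timeShiftOperator$ and there are no branch points; this is the excluded value. Otherwise,
\begin{equation*}
\stableRoot(\timeShiftOperator)=\frac{-(\timeShiftOperator^2+1-\relaxationParameter)\pm\sqrt{\Delta(\timeShiftOperator)}}{\bigl((\relaxationParameter-2)+\relaxationParameter\courantNumber\bigr)\timeShiftOperator},
\end{equation*}
with the sign chosen so that $\stableRoot(\timeShiftOperator)\in\unitDisk$ on $\neighborhoodInfinity$.

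\emph{Step 2: the discriminant.} A direct computation gives
\begin{equation*}
\Delta(\timeShiftOperator)=(\timeShiftOperator^2+1-\relaxationParameter)^2-\bigl((\relaxationParameter-2)^2-\relaxationParameter^2\courantNumber^2\bigr)\timeShiftOperator^2 .
\end{equation*}
Expanding, the coefficient of $\timeShiftOperator^2$ is $(\courantNumber^2-1)\relaxationParameter^2+2(\relaxationParameter-1)$, and the constant term is $(\relaxationParameter-1)^2$. This is exactly the quartic in the statement.

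Singularities of an algebraic function can only occur at zeros of $\Delta$, at zeros of the leading coefficient, or at $\infty$. The leading coefficient contributes only $\timeShiftOperator=0$, which gives a pole, not a branch point. At a simple zero of $\Delta$ the square root genuinely branches. So the branch points are the zeros of $\Delta$, except possibly at double zeros. These occur only on the exceptional algebraic set where $B^2=4(\relaxationParameter-1)^2$, with $B$ defined below, and I would note this as a degenerate situation, as done in \cite{bellotti2025perfectly} for \Cref{lemma:BranchPoints}.

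\emph{Step 3: localization, which is the main point.} Set $w=\timeShiftOperator^2$, so that $\Delta=w^2+Bw+c$ with
\begin{equation*}
B=(\courantNumber^2-1)\relaxationParameter^2+2\relaxationParameter-2,\qquad c=(\relaxationParameter-1)^2 .
\end{equation*}
It suffices to show that both roots satisfy $|w|<1$, since then $|\timeShiftOperator|<1$ as well. For a real monic quadratic, both roots lie in $\unitDisk$ if and only if $|c|<1$ and $|B|<1+c$.

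The first condition holds because $\relaxationParameter\in(0,2)$ gives $c<1$.

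For $B<1+c$, compute $B-(1+c)=(\courantNumber^2-2)\relaxationParameter^2+4\relaxationParameter-4$. This quadratic in $\relaxationParameter$ has negative leading coefficient and discriminant $16(\courantNumber^2-1)<0$, so it is negative for every $\relaxationParameter$.

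For $B>-(1+c)$, compute $B+1+c=\courantNumber^2\relaxationParameter^2>0$. This holds since $\courantNumber\neq0$ in the setting of \Cref{property:Gaussian}; if $\courantNumber=0$ one gets a root at $|w|=1$, which I would flag.

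Hence all four zeros lie in $\unitDisk$. As a sanity check, when the roots in $w$ are complex conjugate one directly has $|w|^2=c<1$.

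I expect no real obstacle beyond keeping the algebra of Step 2 straight and verifying the Jury conditions. Of the two, the sign check $B<1+c$ is the only non-obvious inequality.
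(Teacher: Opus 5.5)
The paper states this lemma without proof; its leap-frog counterpart, \Cref{lemma:BranchPoints}, is only referred to \cite{bellotti2025perfectly}. So there is no argument of the authors to compare with, and your proof stands on its own as a correct one.

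Your main line is sound. The discriminant $\Delta$ is computed correctly. The identities $1+B+c=\courantNumber^2\relaxationParameter^2$ and $B-(1+c)=(\courantNumber^2-2)\relaxationParameter^2+4\relaxationParameter-4<0$, together with $c<1$, are exactly the Jury conditions in $w=\timeShiftOperator^2$. Hence all zeros of the quartic lie in $\unitDisk$ as soon as $\courantNumber\neq 0$.

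Your flag at $\courantNumber=0$ is right. There $\Delta=(\timeShiftOperator^2-1)(\timeShiftOperator^2-(\relaxationParameter-1)^2)$, so $\pm1\in\unitCircle$ are genuine branch points. The hypothesis should therefore read $0<|\courantNumber|<1$, consistent with the surrounding results.

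Three loose ends in Step 2 are easy to close.

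First, the degenerate set you leave flagged is not an extra case. Put $K=(\relaxationParameter-2)^2-\relaxationParameter^2\courantNumber^2$. Then $B-2(\relaxationParameter-1)=(\courantNumber^2-1)\relaxationParameter^2$ and $B+2(\relaxationParameter-1)=-K$, so $B^2-4(\relaxationParameter-1)^2=(1-\courantNumber^2)\relaxationParameter^2K$. This vanishes exactly when $\Delta=(\timeShiftOperator^2+1-\relaxationParameter)^2$ is a perfect square, i.e.\ when $\relaxationParameter(1+\courantNumber)=2$ (the excluded value) or $\relaxationParameter(1-\courantNumber)=2$. In both cases there are no branch points at all. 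The second value lies in $(0,2)$ only for $\courantNumber<0$. There the constant coefficient of your quadratic in $\fourierShift$ vanishes and $\stableRoot$ is again rational. So, if negative Courant numbers are allowed, the exception in the statement should read $\relaxationParameter=2/(1+|\courantNumber|)$.

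Second, your list of double zeros misses $c=0$, i.e.\ $\relaxationParameter=1$. Then $\Delta=\timeShiftOperator^2(\timeShiftOperator^2+\courantNumber^2-1)$, and $\timeShiftOperator=0$ is a zero of the quartic but not a branch point. This is harmless for the localization.

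Third, you argue about $\stableRoot$ rather than $\zTransformed{\globalTruncationError}_{\indexSpace}$. One sentence bridges this: the latter is rational in $(\timeShiftOperator,\stableRoot)$, so its branch points are among those of $\stableRoot$. Some may actually disappear; for instance, Dirichlet data give $\zTransformed{\globalTruncationError}_{0}(\timeShiftOperator)=\timeShiftOperator^{-1}$. That is all the claim "which lay in $\unitDisk$" requires.
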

The previous lemma states that we can forget about branch point singularities, for they yield exponentially damped behaviors.
\begin{lemma}[Saddle points]\label{lemma:saddlePointSmooth}
    Assume $\relaxationParameter\in(0, 2)$ and $0<\courantNumber\leq 1$.
    Then, the saddle points of the function $\timeShiftOperator\mapsto f(\timeShiftOperator; \nu)$ for \eqref{eq:zeroInitialErrorDissipative}--\eqref{eq:schemeErrorInitialDissipative}--\eqref{eq:schemeErrorEventualBoundaryDissipative}--\eqref{eq:schemeErrorEventualDissipative} are as follows.
\begin{itemize}
    \item For $\nu\in (0, \courantNumber)$, the saddle points are in $\unitDisk$.
    \item For $\nu = \courantNumber$, the saddle points are $\pm (\relaxationParameter-1)\in\unitDisk$, but more importantly
    \begin{equation*}
        \pm 1,
        \qquad \text{with}\qquad 
        \stableRoot(\pm 1) = \pm 1
        \quad \text{and}\quad 
        f''(\pm 1; \courantNumber) 
        =
        \frac{2}{\courantNumber^2}\Bigl (\frac{1}{\relaxationParameter} - \frac{1}{2} \Bigr ) (1-\courantNumber^2).
    \end{equation*}
    This saddle points are associated with a \strong{Gaussian peak}.
    \item For $\nu\in (\courantNumber, 1)$, saddle points are both in $\unitDisk$ and in $\neighborhoodInfinity$.
\end{itemize}
\end{lemma}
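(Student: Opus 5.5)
The plan is to reduce the saddle-point equation $f'(\timeShiftOperator;\nu)=0$ to an algebraic system through implicit differentiation of the characteristic equation. Write
\begin{equation*}
P(\timeShiftOperator,\fourierShift)\definitionEquality\timeShiftOperator^2+\tfrac{1}{2}(\relaxationParameter-2)\timeShiftOperator(\fourierShift^{-1}+\fourierShift)-\tfrac{1}{2}\relaxationParameter\courantNumber\timeShiftOperator(\fourierShift^{-1}-\fourierShift)+(1-\relaxationParameter).
\end{equation*}
Since $f=\log\timeShiftOperator+\nu\log\stableRoot$, the condition $f'=0$ reads $\nu=-\stableRoot/(\timeShiftOperator\stableRoot')$. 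Using $\stableRoot'=-\partial_{\timeShiftOperator}P/\partial_{\fourierShift}P$, this becomes $\nu\,\timeShiftOperator\partial_{\timeShiftOperator}P=\fourierShift\partial_{\fourierShift}P$ on $\{P=0\}$. On $\{P=0\}$ we have $\timeShiftOperator\partial_{\timeShiftOperator}P=\timeShiftOperator^2-(1-\relaxationParameter)$, while
\begin{equation*}
\fourierShift\partial_{\fourierShift}P=\tfrac{1}{2}\timeShiftOperator\bigl((\relaxationParameter-2)(\fourierShift-\fourierShift^{-1})+\relaxationParameter\courantNumber(\fourierShift+\fourierShift^{-1})\bigr).
\end{equation*}
The saddle points are therefore the common solutions of $P=0$ and
\begin{equation*}
\nu(\timeShiftOperator^2-1+\relaxationParameter)=\tfrac{1}{2}\timeShiftOperator\bigl((\relaxationParameter-2)(\fourierShift-\fourierShift^{-1})+\relaxationParameter\courantNumber(\fourierShift+\fourierShift^{-1})\bigr),
\end{equation*}
with the additional requirement that $\fourierShift=\stableRoot(\timeShiftOperator)$ be the branch continued from $\neighborhoodInfinity$. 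Eliminating $\fourierShift$, for instance by solving the second equation for the two symmetric combinations $\fourierShift\pm\fourierShift^{-1}$ and substituting into $P=0$, gives a quartic-type equation in $\timeShiftOperator^2$ parametrized by $\nu$.

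For $\nu=\courantNumber$, I would first check directly that $\timeShiftOperator=1$, $\fourierShift=1$ solves $P=0$. I would then identify $\stableRoot(1)=1$: the other root at $\timeShiftOperator=1$ is $(2-\relaxationParameter-\relaxationParameter\courantNumber)/(2-\relaxationParameter+\relaxationParameter\courantNumber)$, which has modulus smaller than one for $\courantNumber>0$, so continuity from $\neighborhoodInfinity$ (inflow) selects the root at $1$. Next, the saddle equation gives $\timeShiftOperator\partial_{\timeShiftOperator}P=\relaxationParameter$ and $\fourierShift\partial_{\fourierShift}P=\relaxationParameter\courantNumber$, hence $\nu=\courantNumber$. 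The point $\timeShiftOperator=-1$ is handled by the symmetry $P(-\timeShiftOperator,-\fourierShift)=P(\timeShiftOperator,\fourierShift)$, which gives $\stableRoot(-\timeShiftOperator)=-\stableRoot(\timeShiftOperator)$. For $f''(\pm1;\courantNumber)$, I would expand $\log\stableRoot(1+h)=-h/\courantNumber+a h^2+\bigO{h^3}$, obtaining $a$ by inserting the expansion into $P=0$ at second order; then $f''(1;\courantNumber)=-1+2\courantNumber a$. The claimed value $\frac{2}{\courantNumber^2}(\frac{1}{\relaxationParameter}-\frac{1}{2})(1-\courantNumber^2)$ should follow by routine algebra, and it is positive precisely because $\relaxationParameter<2$ and $\courantNumber<1$, which gives the Gaussian (non-degenerate) character. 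The remaining saddle points at $\nu=\courantNumber$ are read off from the reduced polynomial once the factors corresponding to $\timeShiftOperator^2=1$ are divided out, which should leave $\timeShiftOperator=\pm(\relaxationParameter-1)$.

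The main obstacle is the localization for $\nu\neq\courantNumber$, namely showing that all saddle points lie in $\unitDisk$ for $\nu\in(0,\courantNumber)$, while for $\nu\in(\courantNumber,1)$ some lie in $\neighborhoodInfinity$. I would first use local dissipativity: for $\timeShiftOperator\in\unitCircle\smallsetminus\{\pm1\}$, $\stableRoot(\timeShiftOperator)$ cannot lie on $\unitCircle$, and for real $\nu$ a saddle point on $\unitCircle$ with $\fourierShift\notin\unitCircle$ is excluded by comparing moduli in the group-velocity relation. As a consequence, saddle points can cross $\unitCircle$ only at $\pm1$, which happens only for $\nu=\courantNumber$. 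A continuity argument in $\nu$ then finishes the localization. The roots of the reduced polynomial depend continuously on $\nu$, so their number inside $\unitDisk$ is constant on $(0,\courantNumber)$ and on $(\courantNumber,1)$. This number can be evaluated in the limit $\nu\to0$, where the saddle equation forces $\timeShiftOperator^2\to1-\relaxationParameter$ or $\fourierShift^2$ to take a special value, both inside $\unitDisk$. On $(\courantNumber,1)$ it can be evaluated by the sign of the real perturbation around $\pm1$ coming from $f''(\pm1;\courantNumber)>0$: the double root at $\pm1$ splits, with one root entering $\neighborhoodInfinity$ as $\nu$ crosses $\courantNumber$. If branch selection makes this delicate, I would fall back on Rouché's theorem applied to the resultant polynomial on $\unitCircle$.
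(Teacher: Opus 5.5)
Your elimination is sound, and it is the natural analogue of what the paper does for the leap-frog case in \Cref{lemma:saddlePoints}; the paper itself gives no proof of this lemma. Your two equations are linear in $\fourierShift\pm\fourierShift^{-1}$, so the identity $(\fourierShift+\fourierShift^{-1})^2-(\fourierShift-\fourierShift^{-1})^2=4$ gives
\begin{equation*}
(\timeShiftOperator^2+1-\relaxationParameter)^2-\nu^2(\timeShiftOperator^2-1+\relaxationParameter)^2=\bigl((2-\relaxationParameter)^2-\relaxationParameter^2\courantNumber^2\bigr)\timeShiftOperator^2,
\end{equation*}
which for $\relaxationParameter=2$ is exactly $\varphi_4$. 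Your computations at $\nu=\courantNumber$ are correct: the relation at $(1,1)$, the odd symmetry, and $f''(1;\courantNumber)=-1+2\courantNumber a$, which does give $\frac{(2-\relaxationParameter)(1-\courantNumber^2)}{\relaxationParameter\courantNumber^2}$. There is, however, a slip in the branch selection. The second root of $P(1,\cdot)$ is $\frac{2-\relaxationParameter+\relaxationParameter\courantNumber}{2-\relaxationParameter-\relaxationParameter\courantNumber}$, the reciprocal of your value, and it has modulus larger than one; that is what forces $\stableRoot(1)=1$. With your value (modulus less than one), continuity from $\neighborhoodInfinity$ would send the unstable root to $1$, so your inference runs backwards.

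The genuine gap is the localization. Your key claim, that for real $\nu$ a saddle point on $\unitCircle$ with $\fourierShift\notin\unitCircle$ is excluded by comparing moduli, is false as stated. On $\unitCircle$ the saddle condition $\nu\,\partial_\vartheta\log\stableRoot(e^{i\vartheta})=-i$ only forces $\partial_\vartheta\log|\stableRoot(e^{i\vartheta})|=0$ and then fixes $\nu$. By $\stableRoot(-\timeShiftOperator)=-\stableRoot(\timeShiftOperator)$ and real coefficients, this modulus condition always holds at $\vartheta=\pi/2$. For instance, with $\relaxationParameter=1$ one checks that $\timeShiftOperator=\pm i$ are saddle points for $\nu=\sqrt{2-\courantNumber^2}$, with $|\stableRoot(\pm i)|<1$. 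So $\nu<1$ must enter quantitatively, and nothing in your argument uses it. The dissipativity input also fails at $\courantNumber=1$, where $\stableRoot(\timeShiftOperator)=\timeShiftOperator^{-1}$ on all of $\unitCircle$.

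Your crossing picture is also off. The points $\pm1$ are simple roots at $\nu=\courantNumber$; that is what $f''(\pm1;\courantNumber)\neq0$ means. The root moves outward with $\differential{\timeShiftOperator}/\differential{\nu}=1/(\courantNumber f''(1;\courantNumber))>0$; there is no splitting double root. As $\nu\to0$ the roots tend to the branch points (the reduced polynomial becomes the branch-point quartic), not to $\timeShiftOperator^2=1-\relaxationParameter$.

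None of this machinery is needed. The displayed equation is the quadratic $h(w)=w^2-S(\nu)w+(1-\relaxationParameter)^2=0$ in $w=\timeShiftOperator^2$, with $S(\nu)=1+(1-\relaxationParameter)^2+\frac{\relaxationParameter^2(\nu^2-\courantNumber^2)}{1-\nu^2}$. Hence $h(1)=\frac{\relaxationParameter^2(\courantNumber^2-\nu^2)}{1-\nu^2}$ and $h(-1)=(2-\relaxationParameter)^2+\frac{\relaxationParameter^2(1-\courantNumber^2)}{1-\nu^2}>0$.
\begin{itemize}
\item For $\nu<\courantNumber$: $h(\pm1)>0$, $|S|<2$ and the product $(1-\relaxationParameter)^2<1$ put both $w$ in $\unitDisk$.
\item For $\nu=\courantNumber$: $w\in\{1,(1-\relaxationParameter)^2\}$.
\item For $\nu>\courantNumber$: $h(1)<0$ gives one root $w>1$ and one in $[0,1)$.
\end{itemize}
Finally, elimination forgets which root $\fourierShift$ is used, so you must check that the roots in $\neighborhoodInfinity$ for $\nu>\courantNumber$ pair with $\stableRoot$. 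Continuity from $(\timeShiftOperator,\fourierShift)=(1,1)$ at $\nu=\courantNumber$ does this, precisely because the other root at $\timeShiftOperator=1$ has modulus larger than one.
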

As for \Cref{lemma:saddlePoints}, we connect \Cref{lemma:saddlePointSmooth} to \cite{thomee1965stability} in \Cref{rem:linkwithThomee} and \ref{rem:sameSP}.

Thanks to the previous discussion, the solution shall only be of some significance around $\indexSpace\sim \courantNumber\indexTime$, where resemblance to a Gaussian is established. 
\begin{proposition}[Gaussian peak]\label{prop:GaussianPeak}
Consider $\globalTruncationError_{\indexSpace}^{\indexTime}$ solution of \eqref{eq:zeroInitialErrorDissipative}--\eqref{eq:schemeErrorInitialDissipative}--\eqref{eq:schemeErrorEventualBoundaryDissipative}--\eqref{eq:schemeErrorEventualDissipative} and let $\relaxationParameter\in (0, 2)$ with $0<\courantNumber<1$, and \Cref{ass:stableBCDissipative} hold.
    Let $\indexTime \gg 1$ and $\indexSpace\in\naturals$ such that $\indexSpace-\courantNumber\indexTime = \bigO{1}$.
    Then, a good approximation of $\globalTruncationError_{\indexSpace}^{\indexTime}$ is given by 
     \begin{multline}\label{eq:gaussian}
        \globalTruncationError_{\indexSpace}^{\indexTime} 
    \sim 
    \courantNumber
    \Biggl ( \frac{1}{1-\sum_{k\geq 0}(\coefficientEventualBoundaryScheme_k + \coefficientEventualBoundarySchemeOld_k)}
    - \frac{(-1)^{\indexTime+\indexSpace}}{1+\sum_{k\geq 0}(-1)^k (\coefficientEventualBoundaryScheme_k - \coefficientEventualBoundarySchemeOld_k)}\Biggr )\\
    \times
    \underbrace{\frac{1}{\sqrt{2\pi}}
    \frac{1}{\sqrt{2\indexTime ( \frac{1}{\relaxationParameter}-\frac{1}{2} )(1-\courantNumber^2)}} 
    \textnormal{exp}\Bigl ({-\frac{1}{2}\frac{(\indexSpace-\courantNumber\indexTime)^2}{2\indexTime(\frac{1}{\relaxationParameter}-\frac{1}{2})(1-\courantNumber^2)}} \Bigr )}_{L^1-\text{normalized Gaussian}}.
    \end{multline}
\end{proposition}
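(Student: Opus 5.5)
We start from the contour representation \eqref{eq:definition_Error_LeapFrog}, which holds verbatim for the dissipative scheme with $\stableRoot$ now the root of the characteristic equation in \Cref{ass:stableBCDissipative} lying in $\unitDisk$ for $\timeShiftOperator\in\neighborhoodInfinity$. We write $\globalTruncationError_{\indexSpace}^{\indexTime}=\oint_C g(\timeShiftOperator)e^{\indexTime f(\timeShiftOperator;\nu)}\differential{\timeShiftOperator}$ with $\nu=\indexSpace/\indexTime$, and first take $\nu=\courantNumber$ exactly, treating the offset $\indexSpace-\courantNumber\indexTime=\bigO{1}$ at the end.

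The first step is to deform $C$ from a circle of radius $>1$ to a contour hugging $\unitCircle$. By \Cref{ass:stableBCDissipative}, $g$ is holomorphic on a neighborhood of $\closedNeighborhoodInfinity$. By the branch-point lemma, the branch points lie in $\unitDisk$, so $\stableRoot$ extends holomorphically across $\unitCircle$. On $\unitCircle$ we have $\textnormal{Re} f(e^{i\vartheta};\courantNumber)=\courantNumber\log|\stableRoot(e^{i\vartheta})|$. The local dissipativity of the bulk scheme (symbols in $\unitDisk$ except at the two frequencies touching $\unitCircle$) should give $|\stableRoot(e^{i\vartheta})|<1$ for $\vartheta\notin\{0,\pi\}$ and $|\stableRoot(\pm1)|=1$. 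Consequently all of the contour except small arcs around $\pm1$ contributes exponentially small terms, and the asymptotics are governed by the two non-degenerate saddle points $\pm1$ from \Cref{lemma:saddlePointSmooth}. Near each of them we pass the contour through the saddle along the steepest descent direction. Since $f''(\pm1;\courantNumber)=\frac{2}{\courantNumber^2}(\frac{1}{\relaxationParameter}-\frac12)(1-\courantNumber^2)>0$ is real, this direction is vertical (tangent to $\unitCircle$), with $\timeShiftOperator=\pm1+iy$ up to orientation.

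The second step is the local Laplace computation. Using $\stableRoot(1)=1$ and $\stableRoot(-1)=-1$, we get
\begin{equation*}
f(1;\courantNumber)=0,\qquad f(-1;\courantNumber)=i\pi(1+\courantNumber).
\end{equation*}
Together with $\nu\indexTime=\indexSpace$, this shows that $e^{\indexTime f(-1)}=(-1)^{\indexTime+\indexSpace}$, and the phase at $+1$ is trivial. For the prefactor, $2\pi i\,g(1)=(1-\sum_k(\coefficientEventualBoundaryScheme_k+\coefficientEventualBoundarySchemeOld_k))^{-1}$ and $2\pi i\,g(-1)=(1+\sum_k(-1)^k(\coefficientEventualBoundaryScheme_k-\coefficientEventualBoundarySchemeOld_k))^{-1}$. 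At $+1$ the local expansion is $f\simeq\frac12 f''(1)(iy)^2$, and at $-1$ (where $\differential\timeShiftOperator=-i\,\differential y$ in the positive orientation) it is $f\simeq i\pi(1+\courantNumber)+\frac12 f''(-1)(iy)^2$. Each local integral then equals $\pm i\sqrt{2\pi/(\indexTime f''(\pm1))}\,g(\pm1)e^{\indexTime f(\pm1)}$. Combining these gives the prefactor $\frac{1}{2\pi}\sqrt{2\pi/(\indexTime f'')}=\courantNumber/\sqrt{2\pi\cdot2\indexTime(\frac1\relaxationParameter-\frac12)(1-\courantNumber^2)}$, with the relative minus sign between the two contributions. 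This reproduces the constant in \eqref{eq:gaussian} at its maximum. Signs and orientation have to be checked carefully, e.g.\ against Example~\ref{ex:Dirichlet}.

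The third step produces the Gaussian shape. We set $\indexSpace=\courantNumber\indexTime+\eta$ with $\eta=\bigO{\indexTime^{1/2}}$ (which covers the regime $\eta=\bigO{1}$) and keep the saddle fixed at $\pm1$, treating $\eta\log\stableRoot(\timeShiftOperator)$ as a perturbation. Near $+1$ we use $\log\stableRoot(1+iy)\simeq iy\,\stableRoot'(1)$, with $\stableRoot'(1)=-1/\courantNumber$ from the saddle-point condition. This gives the exponent $-\frac12\indexTime f''y^2-i\eta y/\courantNumber$, and the resulting Gaussian (Fourier) integral produces $\exp\bigl(-\eta^2/(2\courantNumber^2\indexTime f'')\bigr)=\exp\bigl(-\tfrac{\eta^2}{2\cdot2\indexTime(\frac1\relaxationParameter-\frac12)(1-\courantNumber^2)}\bigr)$. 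The same holds at $-1$.

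The main obstacle is the global step: proving that $\max_{\unitCircle}\textnormal{Re} f(\cdot;\courantNumber)$ is attained only at $\pm1$ (i.e.\ $|\stableRoot|<1$ on $\unitCircle\smallsetminus\{\pm1\}$), and controlling the remainder uniformly in $\eta$. For the first part I would invoke the dissipativity of the boundary-less symbols via the relation between $|\stableRoot|$ and the amplification factors in the spirit of \cite{thomee1965stability}, or check directly that $|\kappa|=1$ forces $\timeShiftOperator\in\unitDisk$ unless $\kappa=\pm1$. The error terms from the cubic Taylor remainder and from the variation of $g$ are standard, of relative size $\bigO{\indexTime^{-1/2}}$.
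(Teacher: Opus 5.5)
Your proposal is correct and reproduces \eqref{eq:gaussian} exactly. It uses the same saddle points $\pm1$ as the paper, the same values $2\pi i\,g(\pm1)$, $f(\pm1;\courantNumber)$, $f''(\pm1;\courantNumber)$, and the same relative minus sign coming from the reversed orientation at $-1$.

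Where you part ways with the paper is in how the Gaussian factor is produced. The paper sets $\alpha=(\indexSpace-\courantNumber\indexTime)/\indexTime$ and follows the \emph{moving} real saddle points $\saddlePoint^{\pm1}(\alpha)\to\pm1$ of $f(\cdot;\courantNumber+\alpha)$. It reads the Gaussian directly off $e^{\indexTime f(\saddlePoint^{\pm1}(\alpha);\courantNumber+\alpha)}$, using $f(\saddlePoint^{+1}(\alpha);\courantNumber+\alpha)=-\alpha^2/(4(\frac{1}{\relaxationParameter}-\frac12)(1-\courantNumber^2))+\bigO{\alpha^3}$, and then plugs into \eqref{eq:saddlePointComplex}. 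You instead keep the contour on $\unitCircle$ through the \emph{fixed} saddles $\pm1$. This is legitimate, since the steepest descent direction there is tangent to $\unitCircle$. You then complete the square against the linear term $-i\eta y/\courantNumber$, which is in effect a Fourier-analytic local limit theorem.

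What your route buys:
\begin{itemize}
\item It avoids solving the quartic for the saddle points.
\item It makes explicit the localization step that the paper leaves implicit.
\item The check you list as the main obstacle does go through. A Schur--Cohn computation on $\timeShiftOperator^2+(\relaxationParameter-2)\cos(\xi)\timeShiftOperator+i\relaxationParameter\courantNumber\sin(\xi)\timeShiftOperator+1-\relaxationParameter$ reduces the condition to $\cos^2\xi+\courantNumber^2\sin^2\xi<1$. So both bulk symbols lie in $\unitDisk$ for $\xi\notin\{0,\pi\}$. At $\xi=0,\pi$ the only unimodular symbols are $\timeShiftOperator=\pm1$, with $\kappa=\pm1$. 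By continuity from $\neighborhoodInfinity$, this gives $|\stableRoot|<1$ on $\unitCircle\smallsetminus\{\pm1\}$.
\end{itemize}

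What the paper's route buys: the moving-saddle formulation extends to $\nu$ away from $\courantNumber$. There, bounding the integrand on $\unitCircle$ gives no decay near $\pm1$, and one has to deform through the moving saddles anyway.

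One point to tighten is your sentence that ``the same holds at $-1$''. It is best justified by the symmetry $\stableRoot(-\timeShiftOperator)=-\stableRoot(\timeShiftOperator)$ of the characteristic equation. This symmetry gives $\timeShiftOperator^{\indexTime}\stableRoot(\timeShiftOperator)^{\indexSpace}=(-1)^{\indexTime+\indexSpace}(-\timeShiftOperator)^{\indexTime}(-\stableRoot(\timeShiftOperator))^{\indexSpace}$ exactly, and hence $f''(-1)=f''(1)$. It also sidesteps the principal-log cut through $\stableRoot=-1$. That cut matters once you split $\indexSpace=\courantNumber\indexTime+\eta$ into non-integer pieces.
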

Although this result is novel in presence of boundary conditions and in the context of multi-step schemes, it is expected in view of the results on $\relatives$ by \cite[Theorem 1.2]{randles2015convolution} and \cite[Theorem 1.6]{coulombel2022generalized} for one-step schemes.

\subsection{Proofs}

We now provide detailed proofs of the claims of the  previous section.

\subsubsection{Brief reminders on asymptotic analysis of integrals: non-degenerate saddle points}

Along the $\timeShiftOperator$-transform, a second tool that we extensively exploit is a set of techniques of asymptotic analysis of integrals based on saddle point--steepest descent techniques.
We state all results loosely enough to concentrate on their ``physical'' meaning rather than focusing on precisely and rigorously describe reminder terms.

Let us first revise an important result when schemes are considered without boundary condition, so that Fourier analysis is available, \confer{} \Cref{sec:GreenFunction}.
Let $\frequency_{\textnormal{SP}}\in [a, b]$ be the unique point such that $f'(\frequency_{\textnormal{SP}}) = 0$ and $f''(\frequency_{\textnormal{SP}})\neq 0$, then, see \cite[Equation (6.5.12)]{bender2013advanced}, we have the \strong{stationary phase approximation} for $\indexTime\gg 1$
\begin{equation}\label{eq:stationaryPhaseApproximation}
    \int_a^b g(\frequency)e^{i\indexTime f(\frequency)}\differential{\frequency} 
    \sim \sqrt{\frac{2\pi}{\indexTime |f''(\frequency_{\textnormal{SP}})|}} g(\frequency_{\textnormal{SP}})
    e^{i\indexTime f(\frequency_{\textnormal{SP}}) + i\frac{\pi}{4}\textnormal{sgn}(f''(\frequency_{\textnormal{SP}}))}.
\end{equation}
This expression makes sense as long as $g(\frequency_{\textnormal{SP}})$ is well-defined.
When several critical points are present, contributions from each of them sum.
For complex integrals, a very similar result holds, see \cite[Equation (7.113)]{arfken2011mathematical}.
Let $\saddlePoint\in\complex$ the unique point such that $f'(\saddlePoint) = 0$ and $f''(\saddlePoint)\neq 0$, then we have the \strong{steepest descent approximation} for $\indexTime\gg 1$
\begin{equation}\label{eq:saddlePointComplex}
    \int_C g(\timeShiftOperator)e^{\indexTime f(\timeShiftOperator)}\differential{\timeShiftOperator} 
    \sim \sqrt{\frac{2\pi}{\indexTime |f''(\saddlePoint)|}} g(\saddlePoint)
    e^{\indexTime f(\saddlePoint) + i(\frac{\pi}{2}-\frac{1}{2}\textnormal{Arg}(f''(\saddlePoint)))}.
\end{equation}
Note that the term $\frac{\pi}{2}-\frac{1}{2}\textnormal{Arg}(f''(\saddlePoint))$ is nothing but the direction of steepest descent, say $\phi$, obtained by setting $\textnormal{Arg}(f''(\saddlePoint)) + 2\phi = \pi$.
Especially when the original contour $C$ is a closed and several saddle points need to be straddled, geometrical constraints while deform the contour may lead to contributions featuring $-\frac{\pi}{2}-\frac{1}{2}\textnormal{Arg}(f''(\saddlePoint))$ instead, obtained by having $\textnormal{Arg}(f''(\saddlePoint)) + 2\phi = -\pi$.
An example of this is presented in the proof of \Cref{prop:transitionZoneStable}, detailed in \Cref{sec:proofLeapFrog} below.
Notice that \eqref{eq:saddlePointComplex} makes sense as long as $g$ is regular at the saddle point: in the sequel, we deal with issues coming from singular $g$'s at saddle points of $f$, and the degenerate case where $f''(\saddlePoint)= 0$, whose occurrence is remarkably and concisely discussed in  \cite[Appendix III]{born1975principles}.

\subsubsection{Leap-frog bulk scheme}\label{sec:proofLeapFrog}

\begin{proof}[Proof of \Cref{prop:nearWallStable}]
 Simple manipulations give 
    \begin{multline*}
    \stableRoot(\timeShiftOperator^{-1}) = \frac{\timeShiftOperator^{-1}}{2\courantNumber}(\timeShiftOperator^2-1 + \sqrt{\timeShiftOperator^4 + 2(2\courantNumber^2-1)\timeShiftOperator^{2} + 1})\qquad \text{and}\\
    \zTransformed{\globalTruncationError}_{\indexSpace}(\timeShiftOperator^{-1}) = \displaystyle \frac{\timeShiftOperator \, \stableRoot(\timeShiftOperator^{-1})^{\indexSpace}}{1 - \timeShiftOperator \displaystyle \sum_{k\geq 0}\coefficientEventualBoundaryScheme_k \stableRoot(\timeShiftOperator^{-1})^{k} - \timeShiftOperator^2 \displaystyle \sum_{k\geq 0}\coefficientEventualBoundarySchemeOld_k \stableRoot(\timeShiftOperator^{-1})^{k}}.
\end{multline*}
By the assumption on stability, the singularities of $\zTransformed{\globalTruncationError}_{\indexSpace}(\timeShiftOperator^{-1})$ closest to the origin are the four branch points in \Cref{lemma:BranchPoints}.
Let us consider the neighborhood of $e^{i\branchPointAngle}$ in detail.
Computations provided in \cite{bellotti2025perfectly} yield
    \begin{equation*}
        \sqrt{\timeShiftOperator^{4} + 2(2\courantNumber^2-1)\timeShiftOperator^{2} + 1} =2^{3/2} |\courantNumber|^{1/2}(1-\courantNumber^2)^{1/4} e^{i(\branchPointAngle-\frac{\pi}{4})}({1-\timeShiftOperator/e^{i\branchPointAngle}})^{1/2} + \bigO{(1-\timeShiftOperator/e^{i\branchPointAngle})^{3/2}},
    \end{equation*}
    and thus, using the fact that $\sin(\branchPointAngle) = |\courantNumber|$, we obtain
    \begin{align*}
        \stableRoot(\timeShiftOperator^{-1}) = 
        e^{-i\frac{\pi}{2}}
        - 2^{1/2} |\courantNumber|^{-1/2}(1-\courantNumber^2)^{1/4} e^{-i\frac{\pi}{4}}({1-\timeShiftOperator/e^{i\branchPointAngle}})^{1/2}
        +\bigO{1-\timeShiftOperator/e^{i\branchPointAngle}}.
    \end{align*}
    This entails
    \begin{align*}
        \stableRoot(\timeShiftOperator^{-1})^{\indexSpace} = 
        e^{-i\indexSpace\frac{\pi}{2}}
        - 2^{1/2} |\courantNumber|^{-1/2}(1-\courantNumber^2)^{1/4} e^{-i((\indexSpace-1)\frac{\pi}{2}+\frac{\pi}{4})} \indexSpace({1-\timeShiftOperator/e^{i\branchPointAngle}})^{1/2}
        +\bigO{1-\timeShiftOperator/e^{i\branchPointAngle}}.
    \end{align*}
   On the other hand 
   \begin{multline*}
       1 - \timeShiftOperator \displaystyle \sum_{k\geq 0}\coefficientEventualBoundaryScheme_k \stableRoot(\timeShiftOperator^{-1})^{k} - \timeShiftOperator^2 \displaystyle \sum_{k\geq 0}\coefficientEventualBoundarySchemeOld_k \stableRoot(\timeShiftOperator^{-1})^{k} 
       = 
       1 - e^{i\branchPointAngle} \displaystyle \sum_{k\geq 0}\coefficientEventualBoundaryScheme_k e^{-ik\frac{\pi}{2}} - e^{2 i\branchPointAngle} \displaystyle \sum_{k\geq 0}\coefficientEventualBoundarySchemeOld_k e^{-ik\frac{\pi}{2}}\\
       + 2^{1/2} |\courantNumber|^{-1/2}(1-\courantNumber^2)^{1/4} e^{-i\frac{\pi}{4}}
       \Bigl ( 
       e^{i\branchPointAngle} \displaystyle \sum_{k\geq 1} k \coefficientEventualBoundaryScheme_k e^{i(1-k)\frac{\pi}{2}} +e^{2 i\branchPointAngle} \displaystyle \sum_{k\geq 1} k \coefficientEventualBoundarySchemeOld_k e^{i (1-k)\frac{\pi}{2}}
       \Bigr )
       ({1-\timeShiftOperator/e^{i\branchPointAngle}})^{1/2}
        \\
        +\bigO{1-\timeShiftOperator/e^{i\branchPointAngle}},
   \end{multline*}
   which by the Neumann series entails 
   \begin{multline*}
       \Bigl ( 
       1 - \timeShiftOperator \displaystyle \sum_{k\geq 0}\coefficientEventualBoundaryScheme_k \stableRoot(\timeShiftOperator^{-1})^{k} - \timeShiftOperator^2 \displaystyle \sum_{k\geq 0}\coefficientEventualBoundarySchemeOld_k \stableRoot(\timeShiftOperator^{-1})^{k}
       \Bigr )^{-1}
       =
       \frac{1}{1 - e^{i\branchPointAngle} \displaystyle \sum_{k\geq 0}\coefficientEventualBoundaryScheme_k e^{-ik\frac{\pi}{2}} - e^{2 i\branchPointAngle} \displaystyle \sum_{k\geq 0}\coefficientEventualBoundarySchemeOld_k e^{-ik\frac{\pi}{2}}}\\
       \times 
       \Biggl ( 
       1 - 
       \frac{2^{1/2} |\courantNumber|^{-1/2}(1-\courantNumber^2)^{1/4} e^{-i\frac{\pi}{4}}
       \Bigl ( 
       e^{i\branchPointAngle} \displaystyle \sum_{k\geq 1} k \coefficientEventualBoundaryScheme_k e^{i(1-k)\frac{\pi}{2}} +e^{2 i\branchPointAngle} \displaystyle \sum_{k\geq 1} k \coefficientEventualBoundarySchemeOld_k e^{i (1-k)\frac{\pi}{2}}
       \Bigr )}{1 - e^{i\branchPointAngle} \displaystyle \sum_{k\geq 0}\coefficientEventualBoundaryScheme_k e^{-ik\frac{\pi}{2}} - e^{2 i\branchPointAngle} \displaystyle \sum_{k\geq 0}\coefficientEventualBoundarySchemeOld_k e^{-ik\frac{\pi}{2}}}
       ({1-\timeShiftOperator/e^{i\branchPointAngle}})^{1/2}
       \Biggr )
       \\
       +\bigO{1-\timeShiftOperator/e^{i\branchPointAngle}}.
   \end{multline*}
   Blending all together results in 
   \begin{multline*}
       \zTransformed{\globalTruncationError}_{\indexSpace}(\timeShiftOperator^{-1})
       =
       \{\text{zero-order terms}\}
       -
       \frac{2^{1/2} |\courantNumber|^{-1/2}(1-\courantNumber^2)^{1/4}e^{i(\branchPointAngle + (1-\indexSpace)\frac{\pi}{2} -\frac{\pi}{4})}}{1 - e^{i\branchPointAngle} \displaystyle \sum_{k\geq 0}\coefficientEventualBoundaryScheme_k e^{-ik\frac{\pi}{2}} - e^{2 i\branchPointAngle} \displaystyle \sum_{k\geq 0}\coefficientEventualBoundarySchemeOld_k e^{-ik\frac{\pi}{2}}}\\
       \times
       \Biggl ( 
       \frac{
       e^{i\branchPointAngle} \displaystyle \sum_{k\geq 1} k \coefficientEventualBoundaryScheme_k e^{-ik \frac{\pi}{2}} +e^{2 i\branchPointAngle} \displaystyle \sum_{k\geq 1} k \coefficientEventualBoundarySchemeOld_k e^{-ik\frac{\pi}{2}}}{1 - e^{i\branchPointAngle} \displaystyle \sum_{k\geq 0}\coefficientEventualBoundaryScheme_k e^{-ik\frac{\pi}{2}} - e^{2 i\branchPointAngle} \displaystyle \sum_{k\geq 0}\coefficientEventualBoundarySchemeOld_k e^{-ik\frac{\pi}{2}}}
       +  \indexSpace
       \Biggr )
       ({1-\timeShiftOperator/e^{i\branchPointAngle}})^{1/2}
       +\bigO{1-\timeShiftOperator/e^{i\branchPointAngle}}.
   \end{multline*}
   Around $e^{-i\branchPointAngle}$, we obtain the conjugate of the terms explicitly described in the previous expansion.
   Through analogous computations, the expansion around $-e^{-i\branchPointAngle}$ is
      \begin{multline*}
       \zTransformed{\globalTruncationError}_{\indexSpace}(\timeShiftOperator^{-1})
       =
       \{\text{zero-order terms}\}
       -
       \frac{2^{1/2} |\courantNumber|^{-1/2}(1-\courantNumber^2)^{1/4}e^{i(-\branchPointAngle + (1-\indexSpace)\frac{\pi}{2} +\frac{\pi}{4})}}{1 + e^{-i\branchPointAngle} \displaystyle \sum_{k\geq 0}\coefficientEventualBoundaryScheme_k e^{-ik\frac{\pi}{2}} - e^{-2 i\branchPointAngle} \displaystyle \sum_{k\geq 0}\coefficientEventualBoundarySchemeOld_k e^{-ik\frac{\pi}{2}}}\\
       \times
       \Biggl ( 
       -\frac{
       e^{-i\branchPointAngle} \displaystyle \sum_{k\geq 1} k \coefficientEventualBoundaryScheme_k e^{-ik \frac{\pi}{2}} -e^{-2 i\branchPointAngle} \displaystyle \sum_{k\geq 1} k \coefficientEventualBoundarySchemeOld_k e^{-ik\frac{\pi}{2}}}{1 + e^{-i\branchPointAngle} \displaystyle \sum_{k\geq 0}\coefficientEventualBoundaryScheme_k e^{-ik\frac{\pi}{2}} - e^{-2 i\branchPointAngle} \displaystyle \sum_{k\geq 0}\coefficientEventualBoundarySchemeOld_k e^{-ik\frac{\pi}{2}}}
       +  \indexSpace
       \Biggr )
       ({1+\timeShiftOperator/e^{-i\branchPointAngle}})^{1/2}
       +\bigO{1+\timeShiftOperator/e^{-i\branchPointAngle}},
   \end{multline*}
   along with its complex conjugate around $-e^{i\branchPointAngle}$.
   Using \cite[Figure VI.5 and Theorem VI.5]{flajolet2009analytic}, we arrive at
   \begin{multline*}
       \globalTruncationError_{\indexSpace}^{\indexTime}
       \sim 
       \sqrt{\frac{2}{\pi |\courantNumber|}} (1-\courantNumber^2)^{1/4}
       \Biggl ( 
        \textnormal{Re}\Biggl ( 
        \frac{e^{i((1-\indexTime)\branchPointAngle + (1-\indexSpace)\frac{\pi}{2} - \frac{\pi}{4})}}{1 - e^{i\branchPointAngle}  \sum_{k}\coefficientEventualBoundaryScheme_k e^{-ik\frac{\pi}{2}} - e^{2 i\branchPointAngle}  \sum_{k}\coefficientEventualBoundarySchemeOld_k e^{-ik\frac{\pi}{2}}}\\
        \times
        \Biggl ( 
       \frac{
       e^{i\branchPointAngle}  \sum_{k} k \coefficientEventualBoundaryScheme_k e^{-ik \frac{\pi}{2}} +e^{2 i\branchPointAngle}  \sum_{k} k \coefficientEventualBoundarySchemeOld_k e^{-ik\frac{\pi}{2}}}{1 - e^{i\branchPointAngle}  \sum_{k}\coefficientEventualBoundaryScheme_k e^{-ik\frac{\pi}{2}} - e^{2 i\branchPointAngle}  \sum_{k}\coefficientEventualBoundarySchemeOld_k e^{-ik\frac{\pi}{2}}}
       +  \indexSpace
       \Biggr )
        \Biggr )\\
    +(-1)^{\indexTime}\textnormal{Re}
    \Biggl ( 
    \frac{e^{i((\indexTime-1)\branchPointAngle + (1-\indexSpace)\frac{\pi}{2}+\frac{\pi}{4})}}{1 + e^{-i\branchPointAngle}  \sum_{k}\coefficientEventualBoundaryScheme_k e^{-ik\frac{\pi}{2}} - e^{-2 i\branchPointAngle}  \sum_{k}\coefficientEventualBoundarySchemeOld_k e^{-ik\frac{\pi}{2}}}\\
    \times
    \Biggl ( 
       -\frac{
       e^{-i\branchPointAngle}  \sum_{k} k \coefficientEventualBoundaryScheme_k e^{-ik \frac{\pi}{2}} -e^{-2 i\branchPointAngle}  \sum_{k} k \coefficientEventualBoundarySchemeOld_k e^{-ik\frac{\pi}{2}}}{1 + e^{-i\branchPointAngle}  \sum_{k}\coefficientEventualBoundaryScheme_k e^{-ik\frac{\pi}{2}} - e^{-2 i\branchPointAngle}  \sum_{k}\coefficientEventualBoundarySchemeOld_k e^{-ik\frac{\pi}{2}}}
       +  \indexSpace
       \Biggr )
       \Biggr )
       \Biggr )\indexTime^{-3/2}
       +\bigO{\indexTime^{-5/2}}.
   \end{multline*}
    The general part of the claim follows.
    
   In the upwind case, see \Cref{ex:upwind}, we have 
    \begin{multline*}
       1 - e^{i\branchPointAngle}  \sum_{k}\coefficientEventualBoundaryScheme_k e^{-ik\frac{\pi}{2}} - e^{2 i\branchPointAngle}  \sum_{k}\coefficientEventualBoundarySchemeOld_k e^{-ik\frac{\pi}{2}} = 
       1-(1+\courantNumber)e^{i\branchPointAngle} 
       +\courantNumber e^{i(\branchPointAngle-\frac{\pi}{2})}\\
       =
       1-(1+\courantNumber)\cos(\branchPointAngle) +\courantNumber\sin(\branchPointAngle) - i ((1+\courantNumber)\sin(\branchPointAngle) + \courantNumber\cos(\branchPointAngle))\\
       =(\sqrt{1-\courantNumber^2}-(1+\courantNumber)) (\sqrt{1-\courantNumber^2} - i\courantNumber)
       =(\sqrt{1-\courantNumber^2}-(1+\courantNumber))e^{i\branchPointAngle}
   \end{multline*}
   through simple trigonometric identities, and 
   \begin{equation*}
       1 + e^{-i\branchPointAngle}  \sum_{k}\coefficientEventualBoundaryScheme_k e^{-ik\frac{\pi}{2}} - e^{-2 i\branchPointAngle}  \sum_{k}\coefficientEventualBoundarySchemeOld_k e^{-ik\frac{\pi}{2}}
       =(\sqrt{1-\courantNumber^2}+(1+\courantNumber)) (\sqrt{1-\courantNumber^2} + i\courantNumber)
       =(\sqrt{1-\courantNumber^2}+(1+\courantNumber)) e^{-i\branchPointAngle}.
   \end{equation*}
   Moreover 
   \begin{equation*}
       e^{i\branchPointAngle}  \sum_{k} k \coefficientEventualBoundaryScheme_k e^{-ik \frac{\pi}{2}} +e^{2 i\branchPointAngle}  \sum_{k} k \coefficientEventualBoundarySchemeOld_k e^{-ik\frac{\pi}{2}}
       =-\courantNumber e^{i(\branchPointAngle-\frac{\pi}{2})},
   \end{equation*}
   and 
   \begin{equation*}
       e^{-i\branchPointAngle}  \sum_{k} k \coefficientEventualBoundaryScheme_k e^{-ik \frac{\pi}{2}} -e^{-2 i\branchPointAngle}  \sum_{k} k \coefficientEventualBoundarySchemeOld_k e^{-ik\frac{\pi}{2}}
       =\courantNumber e^{i(\frac{\pi}{2}-\branchPointAngle)}.
   \end{equation*}
   Into the general expression for the expansion, this yields the claim after some trigonometry.
\end{proof}

We naturally place this proof of \Cref{prop:momentsInSpaceStable} after the one of \Cref{prop:nearWallStable}, as it is equally based on techniques of analytic combinatorics.
Indeed, it stems from univariate analytic combinatorics, since the fact of taking moments absorbs the second index $\indexSpace$.
\begin{proof}[Proof of \Cref{prop:momentsInSpaceStable}]

Let us introduce the equivalent of the $\timeShiftOperator$-transform in space:
\begin{equation*}
    \doubleZTransformed{\globalTruncationError}(\timeShiftOperator, \fourierShift)
    \definitionEquality \sum_{\indexTime=0}^{+\infty}\sum_{\indexSpace=0}^{+\infty}\timeShiftOperator^{-\indexTime}\fourierShift^{-\indexSpace}\globalTruncationError_{\indexSpace}^{\indexTime} 
    = \sum_{\indexSpace=0}^{+\infty}\fourierShift^{-\indexSpace}\zTransformed{\globalTruncationError}_{\indexSpace} (\timeShiftOperator).
\end{equation*}
Applying to \eqref{eq:ztransformedProblem} gives
\begin{equation*}
    \underbrace{\Bigl [\timeShiftOperator^2-1 - \courantNumber\timeShiftOperator (\fourierShift^{-1}-\fourierShift)\Bigr ]}_{\text{bulk scheme}}\doubleZTransformed{\globalTruncationError}(\timeShiftOperator, \fourierShift) = \underbrace{(\timeShiftOperator^2 - 1 + \courantNumber\timeShiftOperator\fourierShift) \zTransformed{\globalTruncationError}_{0}(\timeShiftOperator) + \courantNumber\timeShiftOperator \zTransformed{\globalTruncationError}_{1}(\timeShiftOperator)}_{\text{time-space boundary schemes}}.
\end{equation*}
As the expressions of $\zTransformed{\globalTruncationError}_0(\timeShiftOperator)$ and  $\zTransformed{\globalTruncationError}_1(\timeShiftOperator)$ are explicit, we obtain 
\begin{equation*}
    \doubleZTransformed{\globalTruncationError}(\timeShiftOperator, \fourierShift) = \frac{\timeShiftOperator({\timeShiftOperator^2 - 1 + \courantNumber\timeShiftOperator(\fourierShift   + \stableRoot(\timeShiftOperator))})}{(\timeShiftOperator^2 - \timeShiftOperator\sum_{k\geq 0}\coefficientEventualBoundaryScheme_k \stableRoot(\timeShiftOperator)^k - \sum_{k\geq 0}\coefficientEventualBoundarySchemeOld_k\stableRoot(\timeShiftOperator)^k)(\timeShiftOperator^2-1 - \courantNumber\timeShiftOperator (\fourierShift^{-1}-\fourierShift))}.
\end{equation*}
To exploit the results from \cite[Chapter III]{flajolet2009analytic}, we rewrite as
\begin{equation}\label{eq:doubleGeneratingFunction}
    \doubleZTransformed{\globalTruncationError}(\timeShiftOperator^{-1}, \fourierShift^{-1}) = \frac{\timeShiftOperator({1 - \timeShiftOperator^2 + \courantNumber\timeShiftOperator(\fourierShift^{-1}   + \stableRoot(\timeShiftOperator^{-1}))})}{(1 - \timeShiftOperator\sum_{k\geq 0}\coefficientEventualBoundaryScheme_k \stableRoot(\timeShiftOperator^{-1})^k - \timeShiftOperator^2\sum_{k\geq 0}\coefficientEventualBoundarySchemeOld_k\stableRoot(\timeShiftOperator^{-1})^k)(1-\timeShiftOperator^2 - \courantNumber\timeShiftOperator (\fourierShift-\fourierShift^{-1}))}.
\end{equation}
For the zero-order moment, we have to consider the function $\timeShiftOperator\mapsto \doubleZTransformed{\globalTruncationError}(\timeShiftOperator^{-1}, \fourierShift^{-1}=1)$.
Its singularities closest to the origin are all on $\unitCircle$ thanks to the stability assumption: the four branch points and a simple pole at $\timeShiftOperator = -1$ (notice that $\timeShiftOperator = 1$ cancels both numerator and denominator).
The branch points give contributions of order $\bigO{\indexTime^{-3/2}}$ to the asymptotics, and we are thus left to analyze the behavior near the pole.
This results in
\begin{equation*}
     \doubleZTransformed{\globalTruncationError}(\timeShiftOperator^{-1}, \fourierShift^{-1} = 1)
     =
     \frac{\courantNumber}{1 + \sum_{k\geq 0}\coefficientEventualBoundaryScheme_k - \sum_{k\geq 0}\coefficientEventualBoundarySchemeOld_k}(\timeShiftOperator+1)^{-1} + \bigO{1},
\end{equation*}
hence the asymptotics
\begin{equation*}
    \sum_{\indexSpace\geq 0}\globalTruncationError_{\indexSpace}^{\indexTime}
    \sim 
    \frac{\courantNumber(-1)^{\indexTime}}{1 + \sum_{k\geq 0}\coefficientEventualBoundaryScheme_k - \sum_{k\geq 0}\coefficientEventualBoundarySchemeOld_k} + \bigO{\indexTime^{-3/2}}.
\end{equation*}
An analogous computation with $\timeShiftOperator\mapsto \doubleZTransformed{\globalTruncationError}(\timeShiftOperator^{-1}, \fourierShift^{-1}=-1)$ gives 
\begin{equation*}
    \sum_{\indexSpace\geq 0}(-1)^{\indexSpace}\globalTruncationError_{\indexSpace}^{\indexTime}
    \sim 
    \frac{\courantNumber}{\sum_{k\geq 0}(-1)^{k}(\coefficientEventualBoundaryScheme_k+\coefficientEventualBoundarySchemeOld_k) - 1} + \bigO{\indexTime^{-3/2}}.
\end{equation*}

Observe that another proof of the asymptotics for $\sum_{\indexSpace\geq 0}\globalTruncationError_{\indexSpace}^{\indexTime}$ can be given.
To this end, we notice by linearity that $\sum_{\indexSpace\in\naturals}\zTransformed{\globalTruncationError}_{\indexSpace}(\timeShiftOperator)$ is the $\timeShiftOperator$-transform of $\sum_{\indexSpace\geq 0}\globalTruncationError_{\indexSpace}^{\indexTime}$. We obtain
\begin{equation*}
    \sum_{\indexSpace\geq 0} \zTransformed{\globalTruncationError}_{\indexSpace}(\timeShiftOperator) = \displaystyle \frac{\timeShiftOperator \, \sum_{\indexSpace\geq 0} \stableRoot(\timeShiftOperator)^{\indexSpace}}{\timeShiftOperator^2  - \timeShiftOperator \displaystyle \sum_{k\in\naturals}\coefficientEventualBoundaryScheme_k \stableRoot(\timeShiftOperator)^{k} - \displaystyle \sum_{k\in\naturals}\coefficientEventualBoundarySchemeOld_k \stableRoot(\timeShiftOperator)^{k}} = 
    \displaystyle \frac{\timeShiftOperator}{\Bigl( \timeShiftOperator^2  - \timeShiftOperator \displaystyle \sum_{k\in\naturals}\coefficientEventualBoundaryScheme_k \stableRoot(\timeShiftOperator)^{k} - \displaystyle \sum_{k\in\naturals}\coefficientEventualBoundarySchemeOld_k \stableRoot(\timeShiftOperator)^{k} \Bigr ) (1-\stableRoot(\timeShiftOperator))} .
\end{equation*}
The leading singularity of the right-hand side, a first-order pole, is at $\timeShiftOperator = -1$, since $\stableRoot(-1) = 1$.
This yields the asymptotics.

Then, we consider 
\begin{equation*}
    \partial_{\fourierShift}\doubleZTransformed{\globalTruncationError}(\timeShiftOperator^{-1}, \fourierShift^{-1}) =
    \frac{\courantNumber \timeShiftOperator^2(({1 - \timeShiftOperator^2 + \courantNumber\timeShiftOperator(\fourierShift^{-1}   + \stableRoot(\timeShiftOperator^{-1}))})(1+\fourierShift^{-2}) - (1-\timeShiftOperator^2 - \courantNumber\timeShiftOperator (\fourierShift-\fourierShift^{-1})))}{(1 - \timeShiftOperator\sum_{k\geq 0}\coefficientEventualBoundaryScheme_k \stableRoot(\timeShiftOperator^{-1})^k - \timeShiftOperator^2\sum_{k\geq 0}\coefficientEventualBoundarySchemeOld_k\stableRoot(\timeShiftOperator^{-1})^k)(1-\timeShiftOperator^2 - \courantNumber\timeShiftOperator (\fourierShift-\fourierShift^{-1}))^2},
\end{equation*}
and in particular
\begin{equation*}
    \timeShiftOperator\mapsto \partial_{\fourierShift}\doubleZTransformed{\globalTruncationError}(\timeShiftOperator^{-1}, \fourierShift^{-1} = 1) =
    \frac{\courantNumber \timeShiftOperator^2({1 - \timeShiftOperator^2 + 2\courantNumber\timeShiftOperator(1+ \stableRoot(\timeShiftOperator^{-1}))})}{(1 - \timeShiftOperator\sum_{k\geq 0}\coefficientEventualBoundaryScheme_k \stableRoot(\timeShiftOperator^{-1})^k - \timeShiftOperator^2\sum_{k\geq 0}\coefficientEventualBoundarySchemeOld_k\stableRoot(\timeShiftOperator^{-1})^k)(1-\timeShiftOperator)^2(1+\timeShiftOperator)^2}.
\end{equation*}
Again, the leading singularity is a second-order pole at $\timeShiftOperator = -1$ (the function is regular at $\timeShiftOperator = 1$).
The expansion near this singularity reads 
\begin{multline*}
    \timeShiftOperator\mapsto \partial_{\fourierShift}\doubleZTransformed{\globalTruncationError}(\timeShiftOperator^{-1}, \fourierShift^{-1} = 1)
    =
    -\frac{\courantNumber^2}{1+\sum_{k\geq 0}(\coefficientEventualBoundaryScheme_k-\coefficientEventualBoundarySchemeOld_k)}(\timeShiftOperator+1)^{-2}
    \\
    +
    \frac{\courantNumber}{1+\sum_{k\geq 0}(\coefficientEventualBoundaryScheme_k-\coefficientEventualBoundarySchemeOld_k)}
    \Biggl ( 
    2\courantNumber
    +\frac{\sum_{k\geq 0}k(\coefficientEventualBoundaryScheme_k-\coefficientEventualBoundarySchemeOld_k) + \courantNumber \sum_{k\geq 0}(2\coefficientEventualBoundarySchemeOld_k-\coefficientEventualBoundaryScheme_k)}{1+\sum_{k\geq 0}(\coefficientEventualBoundaryScheme_k-\coefficientEventualBoundarySchemeOld_k)}
    \Biggr ) (\timeShiftOperator+1)^{-1} + \bigO{1},
\end{multline*}
yielding the asymptotics
\begin{equation*}
\sum_{\indexSpace\geq 0}\indexSpace\globalTruncationError_{\indexSpace}^{\indexTime}\sim 
-\frac{\courantNumber^2 (-1)^{\indexTime}\indexTime}{1+\sum_{k\geq 0}(\coefficientEventualBoundaryScheme_k-\coefficientEventualBoundarySchemeOld_k)}+\frac{\courantNumber(-1)^{\indexTime}}{1+\sum_{k\geq 0}(\coefficientEventualBoundaryScheme_k-\coefficientEventualBoundarySchemeOld_k)}
    \Biggl ( 
    \courantNumber
    +\frac{\sum_{k\geq 0}k(\coefficientEventualBoundaryScheme_k-\coefficientEventualBoundarySchemeOld_k) + \courantNumber \sum_{k\geq 0}(2\coefficientEventualBoundarySchemeOld_k-\coefficientEventualBoundaryScheme_k)}{1+\sum_{k\geq 0}(\coefficientEventualBoundaryScheme_k-\coefficientEventualBoundarySchemeOld_k)}
    \Biggr )
    + \bigO{\indexTime^{-3/2}}.
\end{equation*}
\end{proof}

\begin{proof}[Proof of \Cref{prop:asymptoticL2Stable}]
    Let $\indexSpace\in\naturals$.
    We first look at the ordinary generating function of the sequence of the squares $\indexTime\mapsto (\globalTruncationError_{\indexSpace}^{\indexTime})^2$.
    Thanks to the celebrated formula by Hadamard \cite[Equations (4) and (7)]{hadamard1899theoreme}, this reads 
    \begin{align*}
        \sum_{\indexTime\geq 0}
        \timeShiftOperator^{\indexTime}
        (\globalTruncationError_{\indexSpace}^{\indexTime})^2
        =
        \frac{1}{2\pi i}
        \oint 
        \zTransformed{\globalTruncationError}_{\indexSpace}(w^{-1})
        \zTransformed{\globalTruncationError}_{\indexSpace}(w \timeShiftOperator^{-1})
        \frac{\differential{w}}{w}
        =
        \frac{1}{2\pi}
        \int_0^{2\pi}
        \zTransformed{\globalTruncationError}_{\indexSpace}(e^{-i\vartheta}/\sqrt{\timeShiftOperator})
        \zTransformed{\globalTruncationError}_{\indexSpace}(e^{i\vartheta}/\sqrt{\timeShiftOperator})
        \differential{\vartheta}.
    \end{align*}
    Without much surprise, this formula is sometimes called ``Parseval integral representation'', e.g.
    \cite{render1997hadamard, pohlen2009hadamard, parol2021contribution}.
    
    Summing over $\indexSpace$ and using linearity, we obtain the generating function of the square of the $L^2$ norm:
    \begin{align*}
        \sum_{\indexTime\geq 0}
        \timeShiftOperator^{\indexTime}
        \Biggl ( \sum_{\indexSpace\geq 0}(\globalTruncationError_{\indexSpace}^{\indexTime})^2
        \Biggr )
        &=
        \frac{1}{2\pi}
        \int_0^{2\pi}
        \sum_{\indexSpace\geq 0}
        \zTransformed{\globalTruncationError}_{\indexSpace}(e^{-i\vartheta}/\sqrt{\timeShiftOperator})
        \zTransformed{\globalTruncationError}_{\indexSpace}(e^{i\vartheta}/\sqrt{\timeShiftOperator})
        \differential{\vartheta}\\
        &=-2\pi
        \int_0^{2\pi}
        \timeShiftOperator^{-1}
        g(e^{-i\vartheta}/\sqrt{\timeShiftOperator}) 
        g(e^{i\vartheta}/\sqrt{\timeShiftOperator}) 
        \sum_{\indexSpace\geq 0}
        (\stableRoot(e^{-i\vartheta}/\sqrt{\timeShiftOperator}) 
        \stableRoot(e^{i\vartheta}/\sqrt{\timeShiftOperator}))^{\indexSpace}
        \differential{\vartheta}\\
        &=-2\pi
        \int_0^{2\pi}
        \frac{
        \timeShiftOperator^{-1}
        g(e^{-i\vartheta}/\sqrt{\timeShiftOperator}) 
        g(e^{i\vartheta}/\sqrt{\timeShiftOperator})}{1-\stableRoot(e^{-i\vartheta}/\sqrt{\timeShiftOperator}) 
        \stableRoot(e^{i\vartheta}/\sqrt{\timeShiftOperator})}
        \differential{\vartheta},
    \end{align*}
    where the last equality comes from the geometric series.
    
    To look at the asymptotics of $\indexTime\mapsto \sum_{\indexSpace\geq 0}(\globalTruncationError_{\indexSpace}^{\indexTime})^2$ in the limit of large $\indexTime$, we can use the results in \cite{flajolet2009analytic}, since the generating function is known and given as above.
    One can see that the leading singularity in the integrand, whatever the value of $\vartheta$, is a first-order pole at $\timeShiftOperator = 1$.
    By \Cref{ass:stableBC}, the numerator of the integrand is regular.
    We get 
    \begin{equation*}
        \frac{
        \timeShiftOperator^{-1}
        g(e^{-i\vartheta}/\sqrt{\timeShiftOperator}) 
        g(e^{i\vartheta}/\sqrt{\timeShiftOperator})}{1-\stableRoot(e^{-i\vartheta}/\sqrt{\timeShiftOperator}) 
        \stableRoot(e^{i\vartheta}/\sqrt{\timeShiftOperator})}
        =
        \frac{g(e^{i\vartheta})g(e^{-i\vartheta})
         \sqrt{e^{4 i\vartheta}+2(2\courantNumber^2-1)e^{2 i\vartheta}+1}}{
         e^{2 i\vartheta}+1}
         (1-\timeShiftOperator)^{-1} 
         +\bigO{1},
    \end{equation*}
    hence the claim.
    Concerning the reminder $\bigO{\indexTime^{-3/2}}$, this comes from the presence of non-leading singularities on the branch points by \Cref{lemma:BranchPoints}.
\end{proof}

\begin{proof}[Proof of \Cref{prop:transitionZoneStable}]
Let us recall that for $\nu \in (0, |\courantNumber|)$ there are four non-degenerate saddle points on $\unitCircle$, whose expression is given by \eqref{eq:ksSaddlePoints}. In particular, the saddle points have principal arguments $\saddlePointAngle$, $-\saddlePointAngle$, $-\saddlePointAngle+\pi$, $\saddlePointAngle-\pi$. For notational convenience, and since no ambiguity will arise, the subscript $\textnormal{SP}$ is omitted throughout this proof. Let us focus on the expression of $f''(e^{\pm i \vartheta}; \nu)$. Since explicit computations yield
\begin{equation*}
    \stableRoot''(e^{i\vartheta})
    =
    \frac{(2\courantNumber^2-1)e^{6i\vartheta}+3e^{4i\vartheta}+3(2\courantNumber^2-1)e^{2i\vartheta} + 1 + 8\nu^3 e^{3i\vartheta} \Bigl (\frac{1-\courantNumber^2}{1-\nu^2}\Bigr )^{3/2}}{8 \courantNumber \nu^3 e^{6i\vartheta} \Bigl (\frac{1-\courantNumber^2}{1-\nu^2}\Bigr )^{3/2}},
\end{equation*}
we can rewrite the expression for \begin{equation*}
    f''(e^{i\vartheta}; \nu)=-\Bigl (1+\frac{1}{\nu}\Bigr )e^{-2i\vartheta} + \nu e^{i(\ksAtsaddlePointAngleplain-\pi)} \stableRoot''(e^{i\vartheta}),
\end{equation*}
as
\begin{multline*}
    f''(e^{i\vartheta}; \nu) = -\Bigl (1+\frac{1}{\nu}\Bigr )e^{-2i\vartheta} \\ -   \frac{e^{i(\ksAtsaddlePointAngleplain-6\vartheta)}}{{8 \courantNumber \nu^2 \Bigl (\frac{1-\courantNumber^2}{1-\nu^2}\Bigr )^{3/2}}} \Bigl ((2\courantNumber^2-1)e^{6i\vartheta}+3e^{4i\vartheta}+3(2\courantNumber^2-1)e^{2i\vartheta} + 1 + 8\nu^3 e^{3i\vartheta} \Bigl (\frac{1-\courantNumber^2}{1-\nu^2}\Bigr )^{3/2}\Bigr ). 
\end{multline*}
We now employ \eqref{eq:saddlePointComplex} with descent direction $\frac{\pi}{2}-\tfrac{1}{2}\text{Arg}(f''(e^{i\vartheta}; \nu))$ for $\saddlePoint = e^{i\vartheta}$, $\frac{\pi}{2}-\tfrac{1}{2}\text{Arg}(f''(e^{-i\vartheta}; \nu)) = \frac{\pi}{2}+\tfrac{1}{2}\text{Arg}(f''(e^{i\vartheta}; \nu))$ for $\saddlePoint = e^{-i\vartheta}$, and remark that the corresponding terms are complex conjugate; as well as $-\frac{\pi}{2}-\tfrac{1}{2} \text{Arg}(f''(e^{i(-\vartheta+\pi)}; \nu)) = -\frac{\pi}{2}+\tfrac{1}{2}\text{Arg}(f''(e^{i\vartheta}; \nu))$ for $\saddlePoint = e^{i(-\vartheta+\pi)}$, $-\frac{\pi}{2}-\tfrac{1}{2}\text{Arg}(f''(e^{i(\vartheta-\pi)}; \nu)) = -\frac{\pi}{2}-\tfrac{1}{2}\text{Arg}(f''(e^{i\vartheta}; \nu))$ for $\saddlePoint = e^{i(\vartheta-\pi)}$, and the same conjugacy arguments holds for this pair of saddle points.
For a more visual understanding, refer to the right-hand side of \Cref{fig:paths}: we integrate on the path $C$ featuring steepest descent directions when touching the saddle points.
Using \eqref{eq:ksSaddlePoints}, this yields
\begin{multline*}
    \globalTruncationError_{\indexSpace}^{\indexTime} \sim \sqrt{\frac{2}{\pi \indexTime |f''(e^{i\vartheta}; \nu)|}}\Biggl (\overbrace{\textnormal{Re}\Biggl ( \frac{\text{exp}(i(\indexTime\vartheta+\indexSpace(\pi-\ksAtsaddlePointAngleplain)-\tfrac{1}{2}\textnormal{Arg}(f''(e^{i\vartheta}; \nu))))}{e^{2i\vartheta}  - e^{i\vartheta}  \sum_{k\geq 0}(-1)^k \coefficientEventualBoundaryScheme_k e^{-i k \ksAtsaddlePointAngleplain} -  \sum_{k\geq 0}(-1)^k \coefficientEventualBoundarySchemeOld_k  e^{-i k\ksAtsaddlePointAngleplain}}\Biggr )}^{\text{saddle points}\,e^{i\vartheta}\,\text{and}\,e^{-i\vartheta}} \\
    \underbrace{-\textnormal{Re}\Biggl ( \frac{\text{exp}(i(\indexTime (\vartheta - \pi) - \indexSpace\ksAtsaddlePointAngleplain-\tfrac{1}{2}\textnormal{Arg}(f''(e^{i\vartheta}; \nu))))}{e^{2i\vartheta}  + e^{i\vartheta}  \sum_{k\geq 0} \coefficientEventualBoundaryScheme_k e^{-i k \ksAtsaddlePointAngleplain} -  \sum_{k\geq 0} \coefficientEventualBoundarySchemeOld_k  e^{-i k\ksAtsaddlePointAngleplain}}\Biggr )}_{\text{saddle points}\,e^{i(-\vartheta+\pi)}\,\text{and}\,e^{i(\vartheta - \pi)}} \Biggr ),
\end{multline*}
where the minus sign in front of the real part on the second line comes from the term $-\frac{\pi}{2}$ for the saddle points $e^{i(-\vartheta+\pi)}$ and $e^{i(\vartheta - \pi)}$.
Let us introduce the function of $\mathscr{G}_{\textnormal{R}}$ (the letter R stand for right-half plane) as
\begin{equation*}
     \mathscr{G}_{\textnormal{R}}(\vartheta, \ksAtsaddlePointAngleplain) \definitionEquality{} 
     2\pi i \, e^{i\vartheta} g(e^{i\vartheta})
     = e^{i\vartheta}  - \sum_{k\geq 0}(-1)^k \coefficientEventualBoundaryScheme_k e^{-i k \ksAtsaddlePointAngleplain} -e^{-i\vartheta}\sum_{k\geq 0}(-1)^k \coefficientEventualBoundarySchemeOld_k  e^{-i k\ksAtsaddlePointAngleplain},
\end{equation*}
and let us denote by $\mathscr{G}_{\textnormal{R}}^{\textnormal{Re}}$ and $\mathscr{G}_{\textnormal{R}}^{\textnormal{Im}}$ the real and the imaginary part of $\mathscr{G}_{\textnormal{R}}$ respectively, \idEst
\begin{equation*}
    \mathscr{G}_{\textnormal{R}}^{\textnormal{Re}}(\vartheta, \ksAtsaddlePointAngleplain)=\cos(\vartheta) - \sum_{k\geq 0}(-1)^k \coefficientEventualBoundaryScheme_k \cos(k\ksAtsaddlePointAngleplain)  - \cos(\vartheta) \sum_{k\geq 0}(-1)^k \coefficientEventualBoundarySchemeOld_k  \cos(k\ksAtsaddlePointAngleplain) + \sin(\vartheta) \sum_{k\geq 0}(-1)^k \coefficientEventualBoundarySchemeOld_k  \sin(k\ksAtsaddlePointAngleplain),
\end{equation*}
\begin{equation*}
    \mathscr{G}_{\textnormal{R}}^{\textnormal{Im}}(\vartheta, \ksAtsaddlePointAngleplain)=\sin(\vartheta) + \sum_{k\geq 0}(-1)^k \coefficientEventualBoundaryScheme_k \sin(k\ksAtsaddlePointAngleplain) +  \cos(\vartheta) \sum_{k\geq 0}(-1)^k \coefficientEventualBoundarySchemeOld_k  \sin(k\ksAtsaddlePointAngleplain) + \sin(\vartheta) \sum_{k\geq 0}(-1)^k \coefficientEventualBoundarySchemeOld_k  \cos(k\ksAtsaddlePointAngleplain). 
\end{equation*}
Then, 
\begin{multline*}
\textnormal{Re}\Biggl ( \frac{\text{exp}(i(\indexTime\vartheta+\indexSpace(\pi-\ksAtsaddlePointAngleplain)-\tfrac{1}{2}\textnormal{Arg}(f''(e^{i\vartheta}; \nu))))}{e^{2i\vartheta}  - e^{i\vartheta}  \sum_{k\geq 0}(-1)^k \coefficientEventualBoundaryScheme_k e^{-i k \ksAtsaddlePointAngleplain} -  \sum_{k\geq 0}(-1)^k \coefficientEventualBoundarySchemeOld_k  e^{-i k\ksAtsaddlePointAngleplain}}\Biggr ) \\
=
  (-1)^{\indexSpace}\textnormal{Re}\Biggl ( \frac{\text{exp}(i((\indexTime-1)\vartheta-\indexSpace\ksAtsaddlePointAngleplain-\tfrac{1}{2}\textnormal{Arg}(f''(e^{i\vartheta}; \nu))))}{\mathscr{G}_{\textnormal{R}}(\vartheta, \ksAtsaddlePointAngleplain)} \Biggr) \\ 
  = (-1)^{\indexSpace}\textnormal{Re}\Biggl ( \frac{\text{exp}(i((\indexTime-1)\vartheta-\indexSpace\ksAtsaddlePointAngleplain-\tfrac{1}{2}\textnormal{Arg}(f''(e^{i\vartheta}; \nu))))}{(\mathscr{G}_{\textnormal{R}}^{\textnormal{Re}}(\vartheta, \ksAtsaddlePointAngleplain))^2+(\mathscr{G}_{\textnormal{R}}^{\textnormal{Im}}(\vartheta,\ksAtsaddlePointAngleplain))^2} (\mathscr{G}_{\textnormal{R}}^{\textnormal{Re}}(\vartheta, \ksAtsaddlePointAngleplain) - i \mathscr{G}_{\textnormal{R}}^{\textnormal{Im}}(\vartheta, \ksAtsaddlePointAngleplain))\Biggr) \\ 
  = (-1)^{\indexSpace} \Biggl ( \frac{\text{cos}((\indexTime-1)\vartheta-\indexSpace \ksAtsaddlePointAngleplain-\tfrac{1}{2}\textnormal{Arg}(f''(e^{i\vartheta}; \nu)))}{(\mathscr{G}_{\textnormal{R}}^{\textnormal{Re}}(\vartheta, \ksAtsaddlePointAngleplain))^2+(\mathscr{G}_{\textnormal{R}}^{\textnormal{Im}}(\vartheta, \ksAtsaddlePointAngleplain))^2} \mathscr{G}_{\textnormal{R}}^{\textnormal{Re}}(\vartheta, \ksAtsaddlePointAngleplain) \\+ \frac{\text{sin}((\indexTime-1)\vartheta-\indexSpace \ksAtsaddlePointAngleplain-\tfrac{1}{2}\textnormal{Arg}(f''(e^{i\vartheta}; \nu)))}{(\mathscr{G}_{\textnormal{R}}^{\textnormal{Re}}(\vartheta, \ksAtsaddlePointAngleplain))^2+(\mathscr{G}_{\textnormal{R}}^{\textnormal{Im}}(\vartheta, \ksAtsaddlePointAngleplain))^2} \mathscr{G}_{\textnormal{R}}^{\textnormal{Im}}(\vartheta, \ksAtsaddlePointAngleplain) \Biggr ).
\end{multline*}
and analogously for the expression concerning the saddle points $e^{i(-\vartheta+\pi)}$ and $e^{i(\vartheta-\pi)}$, where we define the function (the letter L stands for left-half plane)
\begin{equation*}
     \mathscr{G}_{\textnormal{L}}(\vartheta, \ksAtsaddlePointAngleplain) \definitionEquality{} 
     2\pi i \, e^{i\vartheta} g(e^{i(\vartheta-\pi)})
     = e^{i\vartheta}  + \sum_{k\geq 0}\coefficientEventualBoundaryScheme_k e^{-i k \ksAtsaddlePointAngleplain} -e^{-i\vartheta}\sum_{k\geq 0} \coefficientEventualBoundarySchemeOld_k  e^{-i k\ksAtsaddlePointAngleplain}.
\end{equation*}

To explicitly evaluate the expression of $\mathscr{G}_{\textnormal{R}}$ and $\mathscr{G}_{\textnormal{L}}$ for the the upwind case, see  \Cref{ex:upwind}, let us set $\coefficientEventualBoundaryScheme_0 = 1+\courantNumber$, $\coefficientEventualBoundaryScheme_1 = -\courantNumber$. Note that 
\begin{equation*}
    \sin(\vartheta) = \sqrt{\frac{\courantNumber^2-\nu^2}{1-\nu^2}}, \qquad \sin(\ksAtsaddlePointAngleplain) =\frac{1}{|\courantNumber|} \sqrt{\frac{\courantNumber^2-\nu^2}{1-\nu^2}}, \qquad \cos(\vartheta) = \sqrt{\frac{1-\courantNumber^2}{1-\nu^2}}, \qquad \cos(\ksAtsaddlePointAngleplain) =-\frac{\nu}{\courantNumber} \sqrt{\frac{1-\courantNumber^2}{1-\nu^2}},
\end{equation*}
which entails 
$\mathscr{G}_{\textnormal{R}}^{\textnormal{Im}} = \mathscr{G}_{\textnormal{L}}^{\textnormal{Im}} = 0$. 
Moreover, since $\coefficientEventualBoundarySchemeOld_k = 0$ for all $k\in\naturals$, the expressions of $\mathscr{G}_{\textnormal{R}}^{\textnormal{Re}}$ and $\mathscr{G}_{\textnormal{L}}^{\textnormal{Re}}$ reduce to
\begin{equation*}
    \mathscr{G}_{\textnormal{R}}^{\textnormal{Re}} = - (1 + \courantNumber) + \sqrt{(1-\courantNumber^2)\frac{1+\nu}{1-\nu}}  
    \qquad \text{and} \qquad
    \mathscr{G}_{\textnormal{L}}^{\textnormal{Re}} = (1 + \courantNumber) + \sqrt{(1-\courantNumber^2)\frac{1+\nu}{1-\nu}}  ,
\end{equation*}
hence the claim.
\end{proof}

\begin{proof}[Proof of \Cref{prop:frontZoneStable}]
Consider $\nu= \indexSpace/\indexTime$ and $\delta = \indexSpace-\courantNumber\indexTime = \bigO{1}$, so that $\nu = \courantNumber+\delta/\indexTime$. Call $\alpha = \delta/\indexTime$, meant to be small. Let us solve the fourth-order equation associated to the saddle points, 
whose solutions have the following explicit expressions
\begin{equation*}
    \timeShiftOperator_{\textnormal{SP}, r}^{\pm 1}(\alpha) = \pm \sqrt{\frac{\courantNumber^2 +2\courantNumber\alpha-\alpha^2 - 1 + 2(-1)^r \sqrt{(\alpha-2\courantNumber)(1-\courantNumber^2)}\sqrt{\alpha}}{\courantNumber^2-2\courantNumber\alpha+\alpha^2-1}},
\end{equation*}
see \eqref{eq:realSaddlePoints}.
Here $r = 0, 1$ and $\lim_{\alpha\to 0}\timeShiftOperator_{\textnormal{SP},r}^{\pm 1}(\alpha) = \pm 1$.
Each of these two clusters give a contribution to the leading-order expansion of $\globalTruncationError_{\indexSpace}^{\indexTime} $ for large $\indexTime$, which we thus write as 
\begin{equation*}
    \globalTruncationError_{\indexSpace}^{\indexTime} = \frac{1}{2\pi i} \oint_{C} g(\timeShiftOperator)e^{\indexTime f(\timeShiftOperator; \alpha)}\differential{\timeShiftOperator} \sim \mathscr{I}^{\indexTime}_{1, \indexSpace} + \mathscr{I}^{\indexTime}_{- 1, \indexSpace},
\end{equation*}
where $\mathscr{I}^{\indexTime}_{\pm 1, \indexSpace}$ stem from the two saddle points coalescing to $ \pm 1$.
Let us discuss $\mathscr{I}^{\indexTime}_{1, \indexSpace}$ in detail, as the computations for $\mathscr{I}^{\indexTime}_{-1, \indexSpace}$ are analogous.
We follow the procedure by Chester-Friedman-Ursell \cite{chester1957extension} which deals with two coalescing saddle-points and fosters uniform asymptotics around $\alpha=0$.
In this spirit, we look for a change of basis allowing to write
\begin{equation*}
    f(\timeShiftOperator_{\textnormal{SP},r}^{+1}(\alpha);\alpha) = \frac{1}{3}u^3 - \zeta(\alpha) u + \eta(\alpha),
\end{equation*}
where $\zeta(\alpha)$ and $\eta(\alpha)$ are to be determined.
From \cite[Equation (4.8)]{wong2001asymptotic}, we have 
\begin{equation*}
    \zeta(\alpha)^{3/2} = \frac{3}{4}(f(\timeShiftOperator_{\textnormal{SP},0}^{1}(\alpha);\alpha)-f(\timeShiftOperator_{\textnormal{SP}, 1}^{1}(\alpha);\alpha)) = \sqrt{\frac{2}{\courantNumber(\courantNumber^2-1)}}\alpha^{3/2} + \bigO{\alpha^{5/2}}.
\end{equation*}
Since we are interested in the limit of small $\alpha$, we retain only the leading order, thus utilize
\begin{equation*}
    \zeta(\alpha)\sim \Bigl (\frac{2}{\courantNumber(\courantNumber^2-1)} \Bigr )^{1/3}\alpha,
\end{equation*}
analogously to \cite[Equation (3.5)]{berry2005tsunami}.
From Equation (4.9) in Wong's monograph
\begin{equation*}
    \eta(\alpha) = \frac{1}{2}(f(\timeShiftOperator_{\textnormal{SP},0}^{1}(\alpha);\alpha)+f(\timeShiftOperator_{\textnormal{SP},1}^{1}(\alpha);\alpha)) = i\pi(\alpha-\courantNumber).
\end{equation*}
This provides 
\begin{multline}\label{eq:tmp1}
    \mathscr{I}^{\indexTime}_{1, \indexSpace}
    \sim e^{\indexTime i\pi(\alpha-\courantNumber)} 
    \frac{1}{2\pi i}\int e^{\indexTime (\frac{1}{3}u^3 - (\frac{2}{\courantNumber(\courantNumber^2-1)}  )^{1/3}\alpha u)} g(\timeShiftOperator(u)) \frac{\differential{\timeShiftOperator(u)}}{\differential{u}}\differential{u}\\
    =(-1)^{\indexSpace} 
    \frac{1}{2\pi i}\int e^{\frac{1}{3}\indexTime u^3 - (\frac{2}{\courantNumber(\courantNumber^2-1)}  )^{1/3}\delta u} g(\timeShiftOperator(u)) \frac{\differential{\timeShiftOperator(u)}}{\differential{u}}\differential{u}.
\end{multline}
We are now left to deal with $g(\timeShiftOperator(u)) \frac{\differential{\timeShiftOperator(u)}}{\differential{u}}$.
We follow the procedure by \cite{bleistein1967uniform} and write 
\begin{equation*}
    g(\timeShiftOperator(u)) \frac{\differential{\timeShiftOperator(u)}}{\differential{u}} = a_0(\alpha) + a_1(\alpha) u + (u^2 - \zeta(\alpha))\psi_2(u; \alpha) \sim a_0(\alpha),
\end{equation*}
where $a_0$, $a_1$, and $\psi_2$ can be determined, and the last approximation is done as the contributions from the second and last term decay to zero quicker than the first one, see Equation (4.21) in Wong's book.
By Equation (4.16) in the same reference,
\begin{equation*}
    a_0(\alpha) = \frac{1}{2}\Bigl ( g(\timeShiftOperator(u)) \frac{\differential{\timeShiftOperator(u)}}{\differential{u}} \Bigr |_{u = \zeta(\alpha)^{1/2}} + g(\timeShiftOperator(u)) \frac{\differential{\timeShiftOperator(u)}}{\differential{u}} \Bigr |_{u = -\zeta(\alpha)^{1/2}} \Bigr ),
\end{equation*}
namely $a_0(\alpha)$ is the average of the contribution from the two coalescing saddle points.
We make the leading-order approximation $g(\timeShiftOperator(\pm \zeta(\alpha)^{1/2})) \sim g(1) = -\frac{1}{2\courantNumber}$ as $|\alpha|\ll 1$.
By virtue of \cite[Equation (4.11)]{wong2001asymptotic} and expanding in $\alpha$, we gain 
\begin{equation*}
    \frac{\differential{\timeShiftOperator(u)}}{\differential{u}} \Bigr |_{u = \pm \zeta(\alpha)^{1/2}}
    =
    \Biggl ( (-1)^{r+1}\frac{2\zeta(\alpha)^{1/2}}{f''(\timeShiftOperator_{\textnormal{SP},r}^{1}(\alpha); \alpha)}\Biggr )^{1/2}
    = \Bigl ( \frac{2\courantNumber^2}{1-\courantNumber^2}\Bigr )^{1/3}+ \bigO{\alpha^{1/2}}
\end{equation*}
where the neglected terms between $\pm$ may be different.
Overall, this gives the estimation
\begin{equation*}
    g(\timeShiftOperator(u)) \frac{\differential{\timeShiftOperator(u)}}{\differential{u}} \sim \frac{1}{2}\Biggl (\frac{1}{\frac{\courantNumber}{2}(\courantNumber^2-1)}\Biggr )^{1/3},
\end{equation*}
hence the right-hand side of \eqref{eq:tmp1} is estimated, after a change of variable $v = \indexTime^{1/3}u$  in the integral, by
\begin{multline*}
    \mathscr{I}^{\indexTime}_{1, \indexSpace}
    \sim 
    \frac{(-1)^{\indexSpace}}{2}
    \frac{1}{(\frac{\courantNumber}{2}(\courantNumber^2-1)\indexTime)^{1/3}}
    \frac{1}{2\pi i}\int e^{\frac{1}{3} v^3 - \frac{\delta}{(\frac{\courantNumber}{2}(\courantNumber^2-1)\indexTime)^{1/3}} v} \differential{v}\\
    =
    \frac{(-1)^{\indexSpace}}{2}
    \frac{1}{(\frac{\courantNumber}{2}(\courantNumber^2-1)\indexTime)^{1/3}} \text{Ai}\Bigl ( \frac{\indexSpace+\courantNumber\indexTime}{(\frac{\courantNumber}{2}(\courantNumber^2-1)\indexTime)^{1/3}}\Bigr ),
\end{multline*}
hence the claim.
\end{proof}

Let us know discuss the case where $\nu \in (|\courantNumber|, 1)$.
By \Cref{lemma:saddlePoints}, we have two saddle points in $\unitDisk$ and two in $\neighborhoodInfinity$, all real.
Those which can be reached by choosing a contour in the region of convergence are the latter two.
Consider $\saddlePoint(\nu)\in\neighborhoodInfinity \cap \reals$.
We now verify that $|\saddlePoint(\nu)\stableRoot(\saddlePoint(\nu))^{\nu} | = |e^{f(\saddlePoint(\nu); \nu)} | = e^{\textnormal{Re}(f(\saddlePoint(\nu); \nu))}<1$, hence an exponential decrease in $\indexTime$ (up to algebraic multiplicative factors, e.g. $\bigO{\indexTime^{-1/2}}$).
Note that $\lim_{\nu\to |\courantNumber|}\textnormal{Re}(f(\saddlePoint(\nu); \nu)) = 0$ (moreover $\textnormal{Re}(f(\saddlePoint(\nu); \nu)) = 0$ for $\nu\in(0, |\courantNumber|]$).
One can verify that the function $\nu\mapsto\textnormal{Re}(f(\saddlePoint(\nu); \nu)) = \log(|\saddlePoint(\nu)|) + \nu \log(|\stableRoot(\saddlePoint(\nu))|)$ with $\saddlePoint(\nu)\in\neighborhoodInfinity \cap \reals$ is decreasing, thus in this zone $e^{\textnormal{Re}(f(\saddlePoint(\nu); \nu))}<1$. 

\begin{proof}[Proof of \Cref{prop:nearWallUnstable} and \ref{prop:transitionUnstable}]
    Consider that $\oint_C = \oint_{\gamma_{\varepsilon}} + \oint_{\tilde{C}\smallsetminus\gamma_{\varepsilon}}$, where $\tilde{C}$ is illustrated on the right of \Cref{fig:paths} in the case of \Cref{prop:transitionUnstable}.
    For \Cref{prop:nearWallUnstable}, $\tilde{C}\smallsetminus\gamma_{\varepsilon}$ is a simple positively-oriented path in the region of convergence  $\neighborhoodInfinity$.
    Therefore, the only singularity enclosed by $\gamma_{\epsilon}$ is the simple pole at $\timeShiftOperator = -1$.
    $\tilde{C}\smallsetminus\gamma_{\varepsilon}$ encloses the four branch points by \Cref{lemma:BranchPoints}, which are the sole singularities for the associated integral.
    
    Concerning \Cref{prop:transitionUnstable}, again the only singularity enclosed by $\gamma_{\epsilon}$ is the simple pole at $\timeShiftOperator = -1$, and $\tilde{C}\smallsetminus\gamma_{\varepsilon}$ passes arbitrarily close to the saddle points for \Cref{prop:transitionUnstable}.

    By the residue theorem
    \begin{equation*}
    \oint_{\gamma_{\varepsilon}} g(\timeShiftOperator)e^{\indexTime f(\timeShiftOperator; \nu)}\differential{\timeShiftOperator} = 2\pi i \textnormal{Res}_{-1} \bigl [ g(\timeShiftOperator)e^{\indexTime f(\timeShiftOperator; \nu)}\bigr ] = -(-1)^\indexTime \Bigl(2 + \sum_{k\geq 0}\coefficientEventualBoundaryScheme_k + \frac{1}{\courantNumber}\sum_{k\geq 0}k(\coefficientEventualBoundaryScheme_k-\coefficientEventualBoundarySchemeOld_k) \Bigr)^{-1} .
    \end{equation*}
    using \eqref{eq:simpleZeroAtMinusOne}.
    The terms $\oint_{\tilde{C}\smallsetminus\gamma_{\varepsilon}}$ are estimated as in \Cref{prop:nearWallStable} and \ref{prop:transitionZoneStable}, yielding the claim.
\end{proof}

\begin{proof}[Proof of \Cref{prop:FrontUnstable}]
In this case, we essentially restart from \eqref{eq:tmp1} (rewritten for the couple of coalescing saddle points towards $\timeShiftOperator = -1$), which reads
    \begin{equation*}
    \globalTruncationError_{\indexSpace}^{\indexTime}
    \sim
    (-1)^{\indexTime} 
    \frac{1}{2\pi i}\int e^{\frac{1}{3}\indexTime u^3 - (\frac{2}{\courantNumber(\courantNumber^2-1)}  )^{1/3}\delta u} g(\timeShiftOperator(u)) \frac{\differential{\timeShiftOperator(u)}}{\differential{u}}\differential{u}.
\end{equation*}
At leading order for small $\alpha$, we notice that 
\begin{equation*}
    g(\timeShiftOperator(u)) \frac{\differential{\timeShiftOperator(u)}}{\differential{u}} = 
    \textnormal{Res}_{-1}[g]u^{-1}+ \dots
    =-
    \Bigl(2 + \sum_{k\geq 0}\coefficientEventualBoundaryScheme_k + \frac{1}{\courantNumber}\sum_{k\geq 0}k(\coefficientEventualBoundaryScheme_k-\coefficientEventualBoundarySchemeOld_k) \Bigr)^{-1}u^{-1} + \dots,
\end{equation*}
due to the presence of a first-order pole in $g(\timeShiftOperator)$ at $\timeShiftOperator=-1$.
This entails the approximation, after a change of variable $v = \indexTime^{1/3}u$:
    \begin{multline*}
    \globalTruncationError_{\indexSpace}^{\indexTime}
    \sim\textnormal{Res}_{-1}[g](-1)^{\indexTime} 
    \frac{1}{2\pi i}\int \frac{e^{\frac{1}{3} v^3 - (\frac{2}{\courantNumber(\courantNumber^2-1)\indexTime}  )^{1/3}\delta v} }{v}\differential{v}\\
    =\textnormal{Res}_{-1}[g](-1)^{\indexTime} 
    \Biggl ( 
    \frac{1}{2\pi i}\int e^{\frac{1}{3} v^3}\frac{1-0}{v}\differential{v}
    -\frac{1}{2\pi i}\int e^{\frac{1}{3} v^3} \frac{1-e^{- (\frac{2}{\courantNumber(\courantNumber^2-1)\indexTime}  )^{1/3}\delta v} }{v}\differential{v}
    \Biggr )\\
    =\textnormal{Res}_{-1}[g](-1)^{\indexTime} 
    \Biggl ( 
    \frac{1}{2\pi i}\int e^{\frac{1}{3} v^3}\int_0^{+\infty} e^{-y v}\differential{y}\differential{v}
    -\frac{1}{2\pi i}\int e^{\frac{1}{3} v^3}\int_0^{(\frac{2}{\courantNumber(\courantNumber^2-1)\indexTime}  )^{1/3}\delta} e^{-y v}\differential{y}\differential{v}
    \Biggr )\\
    =-
    \Bigl(2 + \sum_{k\geq 0}\coefficientEventualBoundaryScheme_k + \frac{1}{\courantNumber}\sum_{k\geq 0}k(\coefficientEventualBoundaryScheme_k-\coefficientEventualBoundarySchemeOld_k) \Bigr)^{-1}(-1)^{\indexTime} 
    \Biggl ( 
    \int_0^{+\infty}\textnormal{Ai}(y)\differential{y}
    -
    \int_0^{(\frac{2}{\courantNumber(\courantNumber^2-1)\indexTime}  )^{1/3}\delta}\textnormal{Ai}(y)\differential{y}
    \Biggr ),
\end{multline*}
where the last equality comes from an exchange of the order of integration in a Fubini-like fashion.
\end{proof}

\subsubsection{A dissipative bulk scheme}

\begin{proof}[Proof of \Cref{prop:GaussianPeak}]
    Consider $\nu= \indexSpace/\indexTime$ and $\delta = \indexSpace-\courantNumber\indexTime = \bigO{1}$, so that $\nu = \courantNumber+\delta/\indexTime$. Call $\alpha = \delta/\indexTime$, meant to be small. 
    We solve the fourth-order equation associated to the saddle points. This gives two solutions $\saddlePoint^{\pm 1}(\alpha)$ such that $\lim_{\alpha\to 0}\saddlePoint^{\pm 1}(\alpha) = \pm 1$.
    Computations provide 
    \begin{align*}
        f(\saddlePoint^{+1}(\alpha); \courantNumber + \alpha) &= 
        -\frac{\alpha^2}{4(\frac{1}{\relaxationParameter}-\frac{1}{2})(1-\courantNumber^2)} + \bigO{\alpha^3},
        \\
        f(\saddlePoint^{-1}(\alpha); \courantNumber + \alpha) &=
        i\pi (1+\courantNumber) + i\pi \alpha  
        -\frac{\alpha^2}{4(\frac{1}{\relaxationParameter}-\frac{1}{2})\textbf{}} + \bigO{\alpha^3},
        \\
        f''(\saddlePoint^{\pm 1}(\alpha); \courantNumber + \alpha) &= \underbrace{\frac{2}{\courantNumber^2}\Bigl ( \frac{1}{\relaxationParameter}-\frac{1}{2}\Bigr )(1-\courantNumber^2)}_{>0} +\bigO{\alpha}.
    \end{align*}
    These truncated expressions into \eqref{eq:saddlePointComplex} give, after straightforward manipulations, \eqref{eq:gaussian}.
\end{proof}

\section{Green functions of the leap-frog scheme on $\relatives$}\label{sec:GreenFunction}

We finish the paper by considering the leap-frog scheme for the Cauchy problem on $\reals$, \idEst{} without boundary.
This reads 
\begin{align}
    &\solutionCauchyProblem_{\indexSpace}^0
    \qquad \text{and}\qquad 
    \solutionCauchyProblem_{\indexSpace}^1 
    \quad\text{given, for }\indexSpace\in\relatives, \label{eq:leapFrogCauchyInitialization} \\
    \indexTime\geq 1, \qquad 
    &\solutionCauchyProblem_{\indexSpace}^{\indexTime + 1}
    =
    \solutionCauchyProblem_{\indexSpace}^{\indexTime - 1}
    +\courantNumber
    (\solutionCauchyProblem_{\indexSpace-1}^{\indexTime}
    -\solutionCauchyProblem_{\indexSpace+1}^{\indexTime}).  \label{eq:leapFrogCauchyBulk}
\end{align}
Generally, one takes $\solutionCauchyProblem_{\indexSpace}^0 = \solutionCauchyProblem^{\initial}(\indexSpace\spaceStep)$ and $\solutionCauchyProblem_{\indexSpace}^1 = \sum_{k\in\relatives} \coefficientInitialBulkScheme_{k} \solutionCauchyProblem_{\indexSpace + k}^0$, where the coefficients $\coefficientInitialBulkScheme_{k}$ satisfy \eqref{eq:orderConstraints} to ensure overall second-order accuracy.
Apart from this common choice, the general solution of \eqref{eq:leapFrogCauchyInitialization}--\eqref{eq:leapFrogCauchyBulk} can be written, thanks to the \strong{superposition principle}, for $\indexTime\geq 2$, as 
\begin{equation*}
    \solutionCauchyProblem_{\indexSpace}^{\indexTime}
    =
    (\firstGreenFunction^{\indexTime}\ast \solutionCauchyProblem^0)_{\indexSpace}
    +
    (\secondGreenFunction^{\indexTime}\ast \solutionCauchyProblem^1)_{\indexSpace}
    =
    \sum_{k \in \relatives}
    \firstGreenFunction_{k}^{\indexTime}
    \solutionCauchyProblem_{\indexSpace-k}^0
    +
    \sum_{k \in \relatives}
    \secondGreenFunction_{k}^{\indexTime}
    \solutionCauchyProblem_{\indexSpace-k}^1,
\end{equation*}
where $\firstGreenFunction_{k}^{\indexTime}$ (respectively, $\secondGreenFunction_{k}^{\indexTime}$) is called ``first \strong{Green function}'' (respectively, ``second \strong{Green function}'').
They are defined by 
\begin{multline*}
    \begin{cases}
        \firstGreenFunction_{\indexSpace}^{\indexTime + 1} = \firstGreenFunction_{\indexSpace}^{\indexTime - 1} + \courantNumber (\firstGreenFunction_{\indexSpace-1}^{\indexTime}-\firstGreenFunction_{\indexSpace+1}^{\indexTime}), \qquad \indexTime\geq 1, \quad \indexSpace\in\relatives, \\
        \firstGreenFunction_{\indexSpace}^0 = \delta_{\indexSpace 0}, \\
        \firstGreenFunction_{\indexSpace}^1 = 0,
    \end{cases}
    \\
    \textnormal{and}
    \qquad \qquad
    \begin{cases}
        \secondGreenFunction_{\indexSpace}^{\indexTime + 1} = \secondGreenFunction_{\indexSpace}^{\indexTime - 1} + \courantNumber (\secondGreenFunction_{\indexSpace-1}^{\indexTime}-\secondGreenFunction_{\indexSpace+1}^{\indexTime}), \qquad \indexTime\geq 1, \quad \indexSpace\in\relatives, \\
        \secondGreenFunction_{\indexSpace}^0 = 0, \\
        \secondGreenFunction_{\indexSpace}^1 = \delta_{\indexSpace 0}.
    \end{cases}
\end{multline*}
This means that the discrete solution can be seen as a superposition of Green functions, whence the interest of studying these latter individually in the limit $\indexTime\gg 1$.
\begin{remark}[Explicit forms]
    Explicit forms of $\firstGreenFunction_{\indexSpace}^{\indexTime}$ and $\secondGreenFunction_{\indexSpace}^{\indexTime}$ are available, see \cite{cheng1999general}.
    However, as in the case with boundary conditions (\confer{} Appendix \ref{app:explicitExpressionUpwind}), they are rather involved and provide little insight into the structure of the Green functions themselves.
\end{remark}

In the case of leap-frog scheme, the two Green functions are tightly linked.
One can thus study one of them and extend the considered property to the other.
\begin{lemma}[Elementary properties of the Green functions]
    For every $\indexTime\in\naturals$ and $\indexSpace\in\relatives$, we have 
    \begin{equation*}
        \secondGreenFunction_{\indexSpace}^{\indexTime} = \firstGreenFunction_{\indexSpace}^{\indexTime + 1}.
    \end{equation*}
    Moreover, the following properties hold (stated for $\firstGreenFunction$ for simplicity).
    \begin{itemize}
        \item \strong{Support}. Let $\indexTime\geq 2$, then $\firstGreenFunction_{\indexSpace}^{\indexTime} = 0$ if $|\indexSpace|\geq \indexTime - 1$.
        \item \strong{Parity}. $\firstGreenFunction_{\indexSpace}^{\indexTime} = 0$ if $\indexTime$ and $\indexSpace$ have different parities. Moreover, for $\indexTime$ even (respectively, odd) $\indexSpace\mapsto\firstGreenFunction_{\indexSpace}^{\indexTime}$ is an even (respectively, odd) function.
        \item \strong{Values at the support-boundary}. Let $\indexTime\geq 2$, then $\firstGreenFunction_{\indexTime - 2}^{\indexTime} = \courantNumber^{\indexTime - 2}$ and $\firstGreenFunction_{-\indexTime + 2}^{\indexTime} = (-\courantNumber)^{\indexTime - 2}$.
    \end{itemize}
\end{lemma}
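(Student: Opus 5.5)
The plan is to work directly with the recurrence \eqref{eq:leapFrogCauchyBulk} and induction on $\indexTime$, avoiding any transform machinery, since every claim is elementary.

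\emph{Relation between the two Green functions.} By the definition, $\firstGreenFunction^{1}_{\indexSpace}=0$, and one step of the recurrence gives $\firstGreenFunction^{2}_{\indexSpace}=\firstGreenFunction^{0}_{\indexSpace}+\courantNumber(\firstGreenFunction^{1}_{\indexSpace-1}-\firstGreenFunction^{1}_{\indexSpace+1})=\delta_{\indexSpace0}$. Hence the shifted sequence $\indexTime\mapsto\firstGreenFunction^{\indexTime+1}$ has initial data $(0,\delta_{\indexSpace 0})$ at times $0$ and $1$. It also satisfies the same two-step recurrence. Uniqueness of the solution of the recurrence given two initial layers then yields $\secondGreenFunction^{\indexTime}_{\indexSpace}=\firstGreenFunction^{\indexTime+1}_{\indexSpace}$ for all $\indexTime\in\naturals$.

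\emph{Support and parity.} We argue by strong induction on $\indexTime\geq 2$.
\begin{itemize}
\item Support: the base case $\firstGreenFunction^2=\delta_{\cdot 0}$ vanishes for $|\indexSpace|\geq 1$. Assume the claim at times $\indexTime$ and $\indexTime-1$. Then $\firstGreenFunction^{\indexTime+1}_{\indexSpace}$ is built from $\firstGreenFunction^{\indexTime-1}_{\indexSpace}$, which vanishes for $|\indexSpace|\geq \indexTime-2$, and from $\firstGreenFunction^{\indexTime}_{\indexSpace\pm1}$, which vanish once $|\indexSpace|-1\geq \indexTime-1$. So all three terms vanish for $|\indexSpace|\geq\indexTime$, as required. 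The starting pair $(\indexTime-1,\indexTime)=(1,2)$ is fine because $\firstGreenFunction^1\equiv0$.
\item Vanishing off parity: a term with $\indexTime+1$ and $\indexSpace$ of different parities only involves $(\indexTime-1,\indexSpace)$ and $(\indexTime,\indexSpace\pm1)$, which are again of mismatched parity. The base cases $\firstGreenFunction^0=\delta$, $\firstGreenFunction^1=0$ comply.
\item Symmetry in $\indexSpace$: the data satisfy $\firstGreenFunction^{0}_{-\indexSpace}=\firstGreenFunction^0_{\indexSpace}$ and $\firstGreenFunction^1\equiv0$. Set $H^{\indexTime}_{\indexSpace}=(-1)^{\indexTime}\firstGreenFunction^{\indexTime}_{-\indexSpace}$. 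Substituting gives $H^{\indexTime+1}_{\indexSpace}=H^{\indexTime-1}_{\indexSpace}+\courantNumber(H^{\indexTime}_{\indexSpace-1}-H^{\indexTime}_{\indexSpace+1})$, because the sign change from $(-1)^{\indexTime}$ exactly compensates the swap $\indexSpace-1\leftrightarrow\indexSpace+1$. $H$ has the same initial data, so by uniqueness $H=\firstGreenFunction$. This gives $\firstGreenFunction^{\indexTime}_{-\indexSpace}=(-1)^{\indexTime}\firstGreenFunction^{\indexTime}_{\indexSpace}$, i.e.\ even or odd according to $\indexTime$.
\end{itemize}

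\emph{Values at the support-boundary.} The base case is $\firstGreenFunction^2_0=1=\courantNumber^0$. At $\indexSpace=\indexTime-1$ and time $\indexTime+1$, the terms $\firstGreenFunction^{\indexTime-1}_{\indexTime-1}$ and $\firstGreenFunction^{\indexTime}_{\indexTime}$ vanish by the support property. Hence $\firstGreenFunction^{\indexTime+1}_{\indexTime-1}=\courantNumber\firstGreenFunction^{\indexTime}_{\indexTime-2}=\courantNumber^{\indexTime-1}$. The left endpoint then follows from the symmetry: $\firstGreenFunction^{\indexTime}_{-\indexTime+2}=(-1)^{\indexTime}\courantNumber^{\indexTime-2}=(-\courantNumber)^{\indexTime-2}$.

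\emph{Expected obstacle.} There is no real obstacle here. The only care needed is the indexing and the fact that the first nonzero layer is $\indexTime=2$. In particular, the support claim must be carried as a two-layer induction hypothesis, and the sign bookkeeping in the reflection argument must be checked.
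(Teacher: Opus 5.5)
Your proof is correct and complete. The shifted sequence $\indexTime\mapsto\firstGreenFunction^{\indexTime+1}$ has data $(0,\delta_{\indexSpace 0})$ and obeys the same recurrence. The two-layer support induction starts legitimately from layers $1,2$ because $\firstGreenFunction^{1}\equiv 0$. The reflected sequence $(-1)^{\indexTime}\firstGreenFunction^{\indexTime}_{-\indexSpace}$ does satisfy the recurrence, since the sign flip compensates the swap of $\indexSpace\pm1$. The boundary values then follow as you wrote.

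The paper states this lemma without proof, so there is no argument of its own to match. The route suggested by the Fourier analysis that follows would be to read everything off $\fourierTransformed{\secondGreenFunction}^{\indexTime}$. Set $\theta=\arcsin(\courantNumber\sin\frequency)$. Then $\sqrt{2\pi}\,\fourierTransformed{\secondGreenFunction}^{\indexTime}=\cos(\indexTime\theta)/\cos\theta$ for odd $\indexTime$, and $-i\sin(\indexTime\theta)/\cos\theta$ for even $\indexTime$. (The Chebyshev rewriting printed in the paper interchanges the two parities.) In both cases this is a trigonometric polynomial in $\frequency$ of degree $\indexTime-1$, containing only harmonics of the parity of $\indexTime-1$, and even or odd in $\frequency$ accordingly. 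After the shift $\secondGreenFunction^{\indexTime}=\firstGreenFunction^{\indexTime+1}$, these features give the support, the parity, the symmetry, and the edge value $\courantNumber^{\indexTime-2}$ (from the leading Chebyshev coefficient).

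That route ties the lemma to the symbols used later. However, it relies on the closed-form symbol, hence on $|\courantNumber|<1$ or careful handling of branches, and on bookkeeping of Chebyshev leading coefficients. Your induction on the explicit recurrence is shorter and self-contained, and it holds for every value of $\courantNumber$.
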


In what follows, we study the second Green function, as it yields slightly simpler expressions than the first one.
Since we are on the whole space $\relatives$, we can use Fourier analysis, see \cite[Chapter 2]{strikwerda2004finite}.
 The Fourier-transformed bulk scheme gives the characteristic equation $\timeShiftOperator^2 + 2i\courantNumber\sin(\frequency)\timeShiftOperator - 1=0$ for $\frequency\in[-\pi, \pi]$.
Its two roots (also called ``symbols'') belong to $\unitCircle$ for every $\frequency$ and their product is constant and equal to $-1$.
    We indicate the root equal to one when $\frequency = 0$ by $\timeShiftOperator_{\varphi}(\frequency)$, which is given by
    \begin{equation*}
        \timeShiftOperator_{\varphi}(\frequency) = \textnormal{exp}(-i\arcsin(\courantNumber\sin(\frequency))),
        \qquad \textnormal{which features the phase}\qquad 
        \vartheta_{\varphi}(\frequency)\definitionEquality-\arcsin(\courantNumber\sin(\frequency))
        .
    \end{equation*}
    The general solution of the Fourier-transformed bulk scheme thus reads $\varphi(\frequency)\timeShiftOperator_{\varphi}(\frequency)^{\indexTime} + \sigma(\frequency)(-1)^{\indexTime} \timeShiftOperator_{\varphi}(\frequency)^{-\indexTime}$, where $\varphi$ (standing for ``physical'') and $\sigma$ (standing for ``spurious'') are determined by the initial data.
    For the second Green function this gives 
    \begin{align*}
        \fourierTransformed{\secondGreenFunction}^{\indexTime}(\frequency)
        &=
        \frac{1}{\sqrt{2\pi}} \Biggl ( \frac{\textnormal{exp}(-i\indexTime\arcsin(\courantNumber\sin(\frequency)))}{2\sqrt{1-\courantNumber^2\sin^2(\frequency)}} 
        +(-1)^{\indexTime + 1}\frac{\textnormal{exp}(i\indexTime\arcsin(\courantNumber\sin(\frequency)))}{2\sqrt{1-\courantNumber^2\sin^2(\frequency)}} \Biggr )\\
        &=
        \frac{1}{\sqrt{2\pi}}
        \Biggl (
            \frac{(-1)^{\indexTime}+1}{2}\frac{T_n\bigl (\sqrt{1-\courantNumber^2\sin^2(\frequency)}\bigr )}{\sqrt{1-\courantNumber^2\sin^2(\frequency)}} 
            +i \frac{1-(-1)^{\indexTime}}{2}\frac{\courantNumber\sin(\frequency) U_{n-1}\bigl (\sqrt{1-\courantNumber^2\sin^2(\frequency)}\bigr )}{\sqrt{1-\courantNumber^2\sin^2(\frequency)}}
        \Biggr ),
    \end{align*}
    where $T_n$ (respectively, $U_n)$ are the Chebyshev polynomials of first kind (respectively, second kind).
    Taking the inverse Fourier transform yields 
    \begin{equation*}
        {\secondGreenFunction}_{\indexSpace}^{\indexTime} = 
        \frac{1}{{2\pi}} \Biggl ( \int_{-\pi}^{\pi}\frac{\textnormal{exp}(i(\indexSpace\frequency-\indexTime\arcsin(\courantNumber\sin(\frequency))))}{2\sqrt{1-\courantNumber^2\sin^2(\frequency)}} \differential{\frequency}
        +(-1)^{\indexTime + 1}\int_{-\pi}^{\pi} \frac{\textnormal{exp}(i(\indexSpace\frequency+\indexTime\arcsin(\courantNumber\sin(\frequency))))}{2\sqrt{1-\courantNumber^2\sin^2(\frequency)}} \differential{\frequency} \Biggr ).
    \end{equation*}

Letting $\nu = \indexSpace/\indexTime$, we rewrite things as
\begin{multline*}
    {\secondGreenFunction}_{\indexSpace}^{\indexTime} =
    \int_{-\pi}^{\pi}
    g(\frequency)
    e^{i \indexTime f_{-}(\frequency; \nu)}
    \differential{\frequency}
    +(-1)^{\indexTime + 1}
    \int_{-\pi}^{\pi}
    g(\frequency)
    e^{i \indexTime f_{+}(\frequency; \nu)}
    \differential{\frequency}, \qquad \textnormal{where}\\
    g(\frequency)
    =
    \frac{1}{{4\pi\sqrt{1-\courantNumber^2\sin^2(\frequency)}}}
    \quad \textnormal{and}\quad 
    f_{\pm}(\frequency; \nu)
    =
    \nu\frequency\pm \arcsin(\courantNumber\sin(\frequency)).
\end{multline*}
Notice that $f_-$ could be called $f_{\varphi}$, since associated to the physical symbol.
With the following lemma on the saddle points of $f_{\pm}$, we understand the structure of the solution is essentially as the one with boundary conditions, except for the \strong{absence of a near-wall zone}.
Indeed, even when $\nu = 0$, (regular) saddle points exists.
\begin{lemma}[Saddle points]\label{lemma:saddlePointsGreen}
    Assume $|\courantNumber|<1$ and $\courantNumber\neq 0$.
    Let $0\leq |\nu|\leq 1$.
    Then, the saddle points of the function $[-\pi, \pi]\ni\frequency \mapsto f_{\pm}(\frequency; \nu)$ are as follows.
    \begin{itemize}
        \item For $|\nu|\in (|\courantNumber|, 1]$, no saddle points.
        \item For $|\nu|\in [0, |\courantNumber|]$, the saddle points are 
        \begin{equation}\label{eq:spWithoutBoundary}
            \frequency_{\textnormal{SP}}
            =
            (-1)^r 
            \arccos
            \Bigl ( 
            \mp
            \frac{\nu}{\courantNumber}
            \sqrt{\frac{1-\courantNumber^2}{1-\nu^2}}
            \Bigr ) 
            \qquad \text{for}\qquad 
            r = 0, 1,
        \end{equation}
        with $f''(\frequency_{\textnormal{SP}}; \nu)
            =
            \pm (-1)^r
            \textnormal{sgn}(\courantNumber)
            (1-\courantNumber^{2})^{-1/2}
            (1-\nu^2)
            (\courantNumber^2-\nu^2)^{1/2}$.
        Therefore, the saddle points are non-degenerate for $|\nu|\in [0, |\courantNumber|)$ and degenerate for $|\nu| = |\courantNumber|$.
    \end{itemize}
\end{lemma}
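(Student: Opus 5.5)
Our plan is a direct computation on the real phase $f_{\pm}(\frequency;\nu)=\nu\frequency\pm\arcsin(\courantNumber\sin(\frequency))$, with $\frequency\in[-\pi,\pi]$. The derivative is
\begin{equation*}
    f_{\pm}'(\frequency;\nu)=\nu\pm\frac{\courantNumber\cos(\frequency)}{\sqrt{1-\courantNumber^2\sin^2(\frequency)}},
\end{equation*}
which is well defined and smooth because $|\courantNumber|<1$. Saddle points are therefore the solutions of $\courantNumber\cos(\frequency)=\mp\nu\sqrt{1-\courantNumber^2\sin^2(\frequency)}$.

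\textbf{Step 1: existence and location.} Squaring the saddle-point equation and writing $\sin^2(\frequency)=1-\cos^2(\frequency)$ gives $\courantNumber^2\cos^2(\frequency)(1-\nu^2)=\nu^2(1-\courantNumber^2)$. Hence $\cos^2(\frequency)=\frac{\nu^2}{\courantNumber^2}\frac{1-\courantNumber^2}{1-\nu^2}$, which is admissible ($\leq 1$) if and only if $\nu^2\leq \courantNumber^2$.

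When $|\nu|>|\courantNumber|$, no solution exists; this gives the first bullet. The endpoint $|\nu|=1$ needs a separate look, but there $|f'_{\pm}|\geq 1-|\courantNumber|/\sqrt{1-\courantNumber^2\sin^2(\frequency)}>0$, since $|\courantNumber\cos(\frequency)|<\sqrt{1-\courantNumber^2\sin^2(\frequency)}$.

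When $|\nu|\leq|\courantNumber|$, we undo the squaring by a sign check. The unsquared equation forces $\textnormal{sgn}(\courantNumber\cos(\frequency))=\mp\textnormal{sgn}(\nu)$, so $\cos(\frequency)=\mp\frac{\nu}{\courantNumber}\sqrt{\frac{1-\courantNumber^2}{1-\nu^2}}$ carries exactly that sign. Both $\frequency=\pm\arccos(\cdot)$ solve it, which yields \eqref{eq:spWithoutBoundary} with $r=0,1$.

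\textbf{Step 2: second derivative.} Differentiating once more gives
\begin{equation*}
    f_{\pm}''(\frequency;\nu)=\mp\frac{\courantNumber\sin(\frequency)(1-\courantNumber^2)}{(1-\courantNumber^2\sin^2(\frequency))^{3/2}}.
\end{equation*}
To evaluate this at the saddle points we need $1-\courantNumber^2\sin^2(\frequency_{\textnormal{SP}})$ and $\sin(\frequency_{\textnormal{SP}})$.

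From the value of $\cos^2$ in Step 1,
\begin{equation*}
    1-\courantNumber^2\sin^2(\frequency_{\textnormal{SP}})=1-\courantNumber^2+\nu^2\frac{1-\courantNumber^2}{1-\nu^2}=\frac{1-\courantNumber^2}{1-\nu^2}.
\end{equation*}
Likewise,
\begin{equation*}
    \sin^2(\frequency_{\textnormal{SP}})=\frac{\courantNumber^2-\nu^2}{\courantNumber^2(1-\nu^2)}.
\end{equation*}
Since $\arccos$ takes values in $[0,\pi]$, $\sin(\frequency_{\textnormal{SP}})=(-1)^r\sqrt{\courantNumber^2-\nu^2}/(|\courantNumber|\sqrt{1-\nu^2})$.

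Substituting, the $\sqrt{1-\nu^2}$ and $(1-\courantNumber^2)$ powers combine to $(1-\courantNumber^2)^{-1/2}(1-\nu^2)(\courantNumber^2-\nu^2)^{1/2}$. The remaining factor $\courantNumber/|\courantNumber|$ becomes $\textnormal{sgn}(\courantNumber)$, and the overall sign is $\mp(-1)^r$. The stated formula has $\pm(-1)^r$; the discrepancy depends only on the convention pairing the $\pm$ of $f_\pm$ with the $\mp$ in \eqref{eq:spWithoutBoundary}, and we will fix it consistently when writing the proof.

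\textbf{Step 3: degeneracy.} The factor $(\courantNumber^2-\nu^2)^{1/2}$ vanishes exactly when $|\nu|=|\courantNumber|$. This gives non-degeneracy for $|\nu|<|\courantNumber|$ and degeneracy at $|\nu|=|\courantNumber|$; in the latter case the two saddle points $r=0,1$ merge at $\frequency=0$ or $\pi$.

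The only delicate step is the sign bookkeeping: discarding the spurious roots introduced by squaring, and tracking the sign of $\sin(\frequency_{\textnormal{SP}})$. The rest is routine algebra.
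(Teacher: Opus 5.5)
Your argument is correct. The paper states this lemma without proof. The closest argument it gives is the appendix proof of \Cref{lemma:saddlePoints}, which works in the complex $\timeShiftOperator$-variable: it clears the square root in $\stableRoot$, locates the zeros of a quartic in $\timeShiftOperator$, and rules out degeneracy through a separate polynomial factorization. Your real-variable computation in $\frequency$ is more direct. The closed form $f_{\pm}''(\frequency;\nu)=\mp\courantNumber(1-\courantNumber^2)\sin(\frequency)(1-\courantNumber^2\sin^2(\frequency))^{-3/2}$, evaluated with $1-\courantNumber^2\sin^2(\frequency_{\textnormal{SP}})=(1-\courantNumber^2)/(1-\nu^2)$, gives existence, location and the degeneracy dichotomy together. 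One could also transport \Cref{lemma:saddlePoints} through the correspondence in \Cref{rem:sameSP}, but that is more roundabout.

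Do not treat the sign as a convention to be reconciled. The pairing of $f_+$ with the upper sign in \eqref{eq:spWithoutBoundary} is forced by $f_+'=0$. With that pairing your $\mp(-1)^r$ is right, and the statement's $\pm(-1)^r$ is a slip. A quick check: for $\nu=0$ and $\courantNumber>0$, $f_+(\frequency;0)=\arcsin(\courantNumber\sin(\frequency))$ has a strict local maximum at $\frequency=\tfrac{\pi}{2}$, the $r=0$ saddle point, so $f_+''<0$ there.

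Your sign is also the one the paper implicitly uses later. The phase $-\tfrac{\pi}{4}\textnormal{sgn}(\courantNumber)$ in \Cref{prop:greenTransition} at $\indexSpace=0$ comes from inserting $f_-''(\tfrac{\pi}{2};0)=\courantNumber(1-\courantNumber^2)^{-1/2}$ into \eqref{eq:stationaryPhaseApproximation}. So state the corrected formula explicitly rather than promising to match the statement. The degeneracy conclusions depend only on $|f''|$ and are unaffected.
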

As saddle points are present for $\nu = 0$, the scheme admits---contrarily to the case with boundary---glancing modes.
They correspond to $\frequency_{\textnormal{SP}} = \pm \tfrac{\pi}{2}$, which are well-known for the leap-frog scheme.

\begin{remark}[Link with \cite{thomee1965stability}]\label{rem:linkwithThomee}
    Let us study the link between saddle points, their degeneracy, and the Taylor expansion in \cite[Theorem 1]{thomee1965stability} around points (whose number is finite, contrarily to our setting) where the symbol belong to $\unitCircle$.
    This Taylor expansion of the logarithm of the symbol also appears in \cite{coulombel2022green, coulombel2022generalized, coulombel2023sharp, coeuret2025local}, for example.
    
    Now, the physical symbol $\timeShiftOperator_{\varphi}(\frequency)$ is not assumed to belong to $\unitCircle$.
    As $e^{i\indexTime f_-( \frequency_{\textnormal{SP}}; \nu)} = e^{ij \frequency_{\textnormal{SP}}} \timeShiftOperator_{\varphi}(\frequency_{\textnormal{SP}})^{\indexTime}$, the only remarkable saddle points are those such that $\timeShiftOperator_{\varphi}(\frequency_{\textnormal{SP}})\in\unitCircle$, hence one first requirement by Thomée.
    Since we have that $f_-( \frequency; \nu) = -i\log(\timeShiftOperator_{\varphi}(\frequency)) + \nu\xi$, we deduce that $i f_-^{(k)}( \frequency; \nu) = (\log(\timeShiftOperator_{\varphi}(\frequency)))^{(k)}$ for $k\geq 2$.
    Therefore
    \begin{equation*}
        \log(\timeShiftOperator_{\varphi}(\frequency))
        =
        \log(\timeShiftOperator_{\varphi}(\frequency_{\textnormal{SP}}))
        -i\nu (\frequency - \frequency_{\textnormal{SP}})
        +\sum_{k\geq 2}
        \frac{i}{k!}
        f_-^{(k)}( \frequency_{\textnormal{SP}}; \nu) (\frequency - \frequency_{\textnormal{SP}})^k,
    \end{equation*}
    thus
    \begin{equation*}
        \timeShiftOperator_{\varphi}(\frequency)
        = 
        \timeShiftOperator_{\varphi}(\frequency_{\textnormal{SP}})
        \textnormal{exp}
        \Bigl ( 
        -i\nu (\frequency - \frequency_{\textnormal{SP}})
        +\sum_{k\geq 2}
        \frac{1}{k!}
        \bigl ( - \textnormal{Im}(f_-^{(k)}( \frequency_{\textnormal{SP}}; \nu)) + i\textnormal{Re}(f_-^{(k)}( \frequency_{\textnormal{SP}}; \nu))\bigr ) (\frequency - \frequency_{\textnormal{SP}})^k
        \Bigr ).
    \end{equation*}
    Then, the assumption in the work by Thomée, which describes (local) dissipativity, is that 
    \begin{align}
    \overline{k}
    =
        \min\{ k\geq 2 \quad \textnormal{such that}
        \quad 
        f_-^{(k)}( \frequency_{\textnormal{SP}}; \nu)\neq 0
        \}\quad \textnormal{is even and}\label{eq:thomee1}
        \\ 
        \textnormal{Im}(f_-^{(\overline{k})}( \frequency_{\textnormal{SP}}; \nu))>0.\label{eq:thomee2}
    \end{align}
    \begin{itemize}
        \item For the leap-frog scheme, the non-degenerate saddle points fulfill \eqref{eq:thomee1} since $\overline{k} = 2$ but fail with \eqref{eq:thomee2}. This is due to the fact that $f_-\in\reals$ as the symbol identically belongs to $\unitCircle$. 
        Degenerate saddle points already fail concerning \eqref{eq:thomee1}, since $\overline{k} = 3$.
        \item For a dissipative scheme such as that of \Cref{sec:dissipative} on $\relatives$, the only saddle point of $f_-$ would be for $\nu=\courantNumber$ at $\frequency_{\textnormal{SP}} = 0$ and regular. This entails that $\overline{k} = 2$, hence \eqref{eq:thomee1} is fulfilled. We also have 
        \begin{equation*}
            f_-''(0; \courantNumber)
            =
            2i(1-\courantNumber^2)
            \Bigl (
            \frac{1}{\relaxationParameter}-
            \frac{1}{2}
            \Bigr ),
            \qquad
            \text{hence \eqref{eq:thomee2} is met as well.}
        \end{equation*}
    \end{itemize}
    Thus, we see that degeneracy/non-degeneracy of saddle points owes some degree of connection with the assumptions by Thomée concerning (local) dissipativity around frequencies for which the symbol belongs to $\unitCircle$.
\end{remark}

\begin{remark}[Link between saddle point problems on $\relatives$ and $\naturals$]\label{rem:sameSP}
    The saddle points that we now find without boundary \eqref{eq:spWithoutBoundary} coincide with the phase of the values of $\stableRoot$ on the saddle points in the case with boundary, see \eqref{eq:ksSaddlePoints}.
    We equally have that $\vartheta_{\varphi}(\frequency_{\textnormal{SP}})$ coincides with the arguments of the saddle points found in \eqref{eq:saddlePointsPhase}.
    This indicates that when treating the case with boundary, we were not far from the setting of Fourier analysis that can be deployed on $\relatives$.
    
    Furthermore, $f(\timeShiftOperator; \nu) \in \complex$ (for $\naturals$) and $i f_{\pm}(\frequency; \nu) \in i\reals$ (for $\relatives$) coincide on the saddle points up to a phase shift of $\pi$.
    The main difference between the Fourier setting in $\relatives$ and the situation on $\naturals$ is that $i f_{\pm}''(\frequency_{\textnormal{SP}}; \nu) \in i\reals$, whereas generally $f''(e^{i\saddlePointAngle}; \nu) \in \complex\smallsetminus i\reals$.
    Overall, the facts highlighted above show that applying  \Cref{lemma:saddlePoints} on degeneracy/non-degeneracy on $\naturals$ provides information sharing similarities to the Taylor expansions of the logarithms of the associated symbols \cite{thomee1965stability}, see \Cref{rem:linkwithThomee}.
\end{remark}

A (simpler) equivalent of \Cref{prop:asymptoticL2Stable} in the boundary-less setting is as follows.
\begin{proposition}[$L^2$ norm asymptotically constant]\label{prop:asymptoticL2StableGreen}
     Let $|\courantNumber|<1$.
     Then 
     \begin{equation*}
         \lim_{\indexTime\to+\infty}
         \lVert \secondGreenFunction^{\indexTime}\rVert_2
         =
         \lim_{\indexTime\to+\infty}
         \Bigl ( \sum_{\indexSpace\in\relatives}|\secondGreenFunction_{\indexSpace}^{\indexTime}|^2
         \Bigr )^{1/2}
         =
         \frac{1}{\sqrt{2}({1-\courantNumber^2})^{1/4}}.
     \end{equation*}
     Moreover, we have that for large $\indexTime\gg 1$, $\lVert \secondGreenFunction^{\indexTime}\rVert_2 \sim \lim_{k\to+\infty}
         \lVert \secondGreenFunction^{k}\rVert_2 + \bigO{\indexTime^{-1/2}}$.
\end{proposition}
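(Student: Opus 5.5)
We compute $\lVert \secondGreenFunction^{\indexTime}\rVert_2^2$ by Parseval's identity on $\relatives$, using the explicit Fourier transform $\fourierTransformed{\secondGreenFunction}^{\indexTime}$ given above. Write $\theta(\frequency)\definitionEquality\arcsin(\courantNumber\sin(\frequency))$ and $w(\frequency)\definitionEquality \sqrt{1-\courantNumber^2\sin^2(\frequency)}$, which is bounded away from zero since $|\courantNumber|<1$. Expanding the modulus squared of the two-term expression gives
\begin{equation*}
    \sum_{\indexSpace\in\relatives}|\secondGreenFunction_{\indexSpace}^{\indexTime}|^2
    =
    \int_{-\pi}^{\pi}|\fourierTransformed{\secondGreenFunction}^{\indexTime}(\frequency)|^2\differential{\frequency}
    =
    \frac{1}{2\pi}\int_{-\pi}^{\pi}\frac{1}{4w(\frequency)^2}\Bigl(2+(-1)^{\indexTime+1}\bigl(e^{2i\indexTime\theta(\frequency)}+e^{-2i\indexTime\theta(\frequency)}\bigr)\Bigr)\differential{\frequency}.
\end{equation*}
This splits into a time-independent part and an oscillatory part.

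The constant part is $\frac{1}{4\pi}\int_{-\pi}^{\pi}\frac{\differential{\frequency}}{1-\courantNumber^2\sin^2(\frequency)}$. The standard integral $\int_{0}^{2\pi}\frac{\differential{\frequency}}{1-\courantNumber^2\sin^2\frequency}=\frac{2\pi}{\sqrt{1-\courantNumber^2}}$, obtained via the residue theorem or by substituting $t=\tan\frequency$, shows that this part equals $\frac{1}{2\sqrt{1-\courantNumber^2}}$. Taking the square root yields the claimed limit $\frac{1}{\sqrt2(1-\courantNumber^2)^{1/4}}$, provided the oscillatory part vanishes as $\indexTime\to+\infty$.

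The oscillatory part is $\frac{(-1)^{\indexTime+1}}{4\pi}\,\textnormal{Re}\int_{-\pi}^{\pi}\frac{e^{2i\indexTime\theta(\frequency)}}{1-\courantNumber^2\sin^2\frequency}\differential{\frequency}$. We estimate it with the stationary phase approximation \eqref{eq:stationaryPhaseApproximation}, with phase $2\theta$. We have $\theta'(\frequency)=\courantNumber\cos(\frequency)/w(\frequency)$, which vanishes exactly at $\frequency=\pm\tfrac{\pi}{2}$; these are the glancing modes of \Cref{lemma:saddlePointsGreen} at $\nu=0$. There, $\theta''(\pm\tfrac{\pi}{2})=\mp\courantNumber/\sqrt{1-\courantNumber^2}\neq0$ because $\courantNumber\neq0$, so the critical points are non-degenerate. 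The integrand is $2\pi$-periodic, so the endpoints of the interval produce no boundary terms. Consequently each critical point contributes $\bigO{\indexTime^{-1/2}}$, and the oscillatory part is $\bigO{\indexTime^{-1/2}}$ (it is identically zero if $\courantNumber=0$). Finally, since the limit is positive, $\sqrt{a+\bigO{\indexTime^{-1/2}}}=\sqrt a+\bigO{\indexTime^{-1/2}}$, which gives the stated rate.

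The main point requiring care is to justify that the error from stationary phase is genuinely $\bigO{\indexTime^{-1/2}}$ and not worse. This follows because the phase is real-analytic and periodic, has finitely many non-degenerate critical points, and the amplitude $w^{-2}$ is smooth. A complete proof would also note that the two contributions, $\propto\cos(2\indexTime\theta(\pm\tfrac{\pi}{2})\pm\tfrac{\pi}{4})$ with $\theta(\pm\tfrac{\pi}{2})=\pm\arcsin\courantNumber$, do not cancel in general. The rate $\indexTime^{-1/2}$ is therefore sharp, in contrast with the $\indexTime^{-3/2}$ of \Cref{prop:asymptoticL2Stable}, where no glancing modes exist.
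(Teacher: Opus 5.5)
Your proof is the paper's argument: Parseval, then splitting $|\fourierTransformed{\secondGreenFunction}^{\indexTime}|^2$ into the time-independent integral, equal to $\frac{1}{2\sqrt{1-\courantNumber^2}}$, plus an oscillatory integral with phase $2\arcsin(\courantNumber\sin\frequency)$, which is $\bigO{\indexTime^{-1/2}}$ by stationary phase at the non-degenerate glancing points $\frequency=\pm\tfrac{\pi}{2}$. One slip: your parenthetical claim that the oscillatory part vanishes when $\courantNumber=0$ is wrong, because the phase is then identically zero and that part equals $\tfrac{1}{2}(-1)^{\indexTime+1}$, so $\lVert\secondGreenFunction^{\indexTime}\rVert_2$ alternates between $1$ and $0$ ($\secondGreenFunction_{\indexSpace}^{\indexTime}=\delta_{\indexSpace 0}$ for odd $\indexTime$, zero for even $\indexTime$); the statement therefore genuinely requires $\courantNumber\neq 0$, as assumed in \Cref{lemma:saddlePointsGreen}.
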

\begin{proof}
By the Parseval equality, we have $\lVert \secondGreenFunction^{\indexTime}\rVert_2^2 = \int_{-\pi}^{\pi} |\fourierTransformed{\secondGreenFunction}^{\indexTime}(\frequency)|^2 \differential{\frequency}$.
This gives 
\begin{align}
    \lVert \secondGreenFunction^{\indexTime}\rVert_2^2
    &=
    \frac{1}{2\pi}
    \Bigl ( 
    \int_{-\pi}^{\pi}
    \frac{1}{2 (1-\courantNumber^2\sin^2(\frequency))}
    \differential{\frequency}
    +(-1)^{\indexTime + 1} 
    \textnormal{Re}\Bigl ( 
    \int_{-\pi}^{\pi}
    \frac{\textnormal{exp}(-2 i \indexTime\arcsin(\courantNumber\sin(\frequency)))}{2 (1-\courantNumber^2\sin^2(\frequency))}
    \differential{\frequency}
    \Bigr )
    \Bigr ) \label{eq:parseval}\\
    &=
    \frac{1}{2\pi}
    \Bigl ( 
    \frac{\pi}{\sqrt{1-\courantNumber^2}}
    +\bigO{\indexTime^{-1/2}}
    \Bigr )
    \Bigr ),
\end{align}
where the estimate on the second integral comes from the stationary phase approximation \eqref{eq:stationaryPhaseApproximation}.
Indeed, the phase in the second integral equals $2 f_-(\frequency; 0)$, which admits two non-degenerate (glancing) saddle points $\frequency_{\textnormal{SP}} = \pm\tfrac{\pi}{2}$ according to \Cref{lemma:saddlePointsGreen}. 
\end{proof}

\begin{remark}[Comparison to ``energy estimates'']
    This asymptotic value has to be compared with the estimate by ``energy method'' in \cite[Section 2.2.3]{coulombel00616497}, which reads
    \begin{equation*}
        \lVert \secondGreenFunction^{2\indexTime}\rVert_2^2 + 
        \lVert \secondGreenFunction^{2\indexTime + 1}\rVert_2^2
        \leq 
        \frac{1+|\courantNumber|}{1-|\courantNumber|},
    \end{equation*}
    where the limit of the left-hand side is 
    \begin{equation*}
        \frac{1}{\sqrt{1-\courantNumber^2}}
         \leq {\frac{1+|\courantNumber|}{1-|\courantNumber|}}.
    \end{equation*}
\end{remark}

\begin{proposition}[Transition zone]\label{prop:greenTransition}
    Let $|\courantNumber|<1$.
    Let $-|\courantNumber|\indexTime<\indexSpace<|\courantNumber|\indexTime$.
    Then, in the limit $\indexTime\gg 1$, $\secondGreenFunction_{\indexSpace}^{\indexTime}$ is well approximated by
    \begin{align*}
        \secondGreenFunction_{\indexSpace}^{\indexTime}
        \sim 
        \frac{1}{\sqrt{2\pi}} |\courantNumber|^{-1/2}&(1-\courantNumber^2)^{-1/4}
    \overbrace{\Bigl (1-\Bigl ( \frac{\indexSpace}{\courantNumber\indexTime}\Bigr )^2 \Bigr )^{-1/4}}^{\text{envelope}}\\
        \times\Biggl (
            &\cos\Bigl ( \indexTime\arcsin\Bigl (\courantNumber\sqrt{\frac{1-(\frac{\indexSpace}{\courantNumber\indexTime})^2}{1-(\frac{\indexSpace}{\indexTime})^2}}\Bigr ) - \indexSpace\arccos\Bigl (\frac{\indexSpace}{\courantNumber\indexTime}\sqrt{\frac{1-\courantNumber^2}{1-(\frac{\indexSpace}{\indexTime})^2}}\Bigr ) - \frac{\pi}{4}\textnormal{sgn}(\courantNumber)\Bigr )\\
            -(-1)^{\indexTime}
            &\cos\Bigl ( \indexTime\arcsin\Bigl (\courantNumber\sqrt{\frac{1-(\frac{\indexSpace}{\courantNumber\indexTime})^2}{1-(\frac{\indexSpace}{\indexTime})^2}}\Bigr ) + \indexSpace\arccos\Bigl (-\frac{\indexSpace}{\courantNumber\indexTime}\sqrt{\frac{1-\courantNumber^2}{1-(\frac{\indexSpace}{\indexTime})^2}}\Bigr ) - \frac{\pi}{4}\textnormal{sgn}(\courantNumber)\Bigr )
        \Biggr )\indexTime^{-1/2}
        +\bigO{\indexTime^{-3/2}}.
    \end{align*}
\end{proposition}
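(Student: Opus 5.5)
We apply the stationary phase approximation \eqref{eq:stationaryPhaseApproximation} to each of the two integrals in the representation
\begin{equation*}
\secondGreenFunction_{\indexSpace}^{\indexTime}=\int_{-\pi}^{\pi}g(\frequency)e^{i\indexTime f_-(\frequency;\nu)}\differential{\frequency}+(-1)^{\indexTime+1}\int_{-\pi}^{\pi}g(\frequency)e^{i\indexTime f_+(\frequency;\nu)}\differential{\frequency},
\qquad \nu=\indexSpace/\indexTime\in(-|\courantNumber|,|\courantNumber|).
\end{equation*}
By \Cref{lemma:saddlePointsGreen}, each phase $f_\pm$ has exactly two non-degenerate stationary points in $[-\pi,\pi]$, namely $\frequency_{\textnormal{SP}}=(-1)^r\arccos(\mp\frac{\nu}{\courantNumber}\sqrt{(1-\courantNumber^2)/(1-\nu^2)})$ for $r=0,1$. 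The integrands are smooth and $2\pi$-periodic, since $g$ is smooth because $|\courantNumber|<1$. Hence there are no endpoint contributions, and the remainder after the leading stationary-phase term is $\bigO{\indexTime^{-3/2}}$. We will, however, need uniformity in $\nu$ only on compact subsets of $(-|\courantNumber|,|\courantNumber|)$.

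The key steps are as follows.

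First, evaluate $f_\pm$, $f_\pm''$ and $g$ at the stationary points. From $\cos\frequency_{\textnormal{SP}}=\mp\frac{\nu}{\courantNumber}\sqrt{(1-\courantNumber^2)/(1-\nu^2)}$ we get
\begin{equation*}
\sin^2\frequency_{\textnormal{SP}}=\frac{\courantNumber^2-\nu^2}{\courantNumber^2(1-\nu^2)},
\qquad\text{hence}\qquad
1-\courantNumber^2\sin^2\frequency_{\textnormal{SP}}=\frac{1-\courantNumber^2}{1-\nu^2}
\quad\text{and}\quad
g(\frequency_{\textnormal{SP}})=\frac{1}{4\pi}\sqrt{\frac{1-\nu^2}{1-\courantNumber^2}}.
\end{equation*}
Similarly,
\begin{equation*}
\arcsin(\courantNumber\sin\frequency_{\textnormal{SP}})=\pm(-1)^r\textnormal{sgn}(\courantNumber)\arcsin\Bigl(\courantNumber\sqrt{\tfrac{1-\nu^2/\courantNumber^2}{1-\nu^2}}\Bigr)
\end{equation*}
up to the sign convention, which must be tracked carefully. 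The values of $f''_\pm$ are given explicitly in \Cref{lemma:saddlePointsGreen}.

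Second, combine the prefactor $\sqrt{2\pi/(\indexTime|f''|)}\,g(\frequency_{\textnormal{SP}})$. Substituting $(1-\nu^2)(\courantNumber^2-\nu^2)^{1/2}=(1-\nu^2)|\courantNumber|(1-\nu^2/\courantNumber^2)^{1/2}$ should produce $\frac{1}{2\sqrt{2\pi}}|\courantNumber|^{-1/2}(1-\courantNumber^2)^{-1/4}(1-(\indexSpace/(\courantNumber\indexTime))^2)^{-1/4}\indexTime^{-1/2}$ for each stationary point.

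Third, add the two contributions of each integral. The points $r=0$ and $r=1$ are symmetric ($\frequency\mapsto-\frequency$ flips the sign of $f_\pm$ and of $f''_\pm$, while $g$ is even), so their terms are complex conjugates. Each pair therefore yields twice a real part, that is $2\cos(\cdot)$ with phase $\indexTime f_\pm(\frequency_{\textnormal{SP}};\nu)+\frac{\pi}{4}\textnormal{sgn}(f''_\pm)$. This factor $2$ cancels the $\frac12$ above, giving the stated constant $\frac{1}{\sqrt{2\pi}}$. The $f_+$ integral carries the prefactor $(-1)^{\indexTime+1}$, which produces the $-(-1)^{\indexTime}$ line. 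Writing $\indexTime f_\pm=\indexSpace\frequency_{\textnormal{SP}}\pm\indexTime\arcsin(\cdot)$ and choosing the representative $r$ whose phase matches the displayed formula, using that cosine is even, gives the two cosines with arguments $\indexTime\arcsin(\cdot)\mp\indexSpace\arccos(\pm\cdot)-\frac{\pi}{4}\textnormal{sgn}(\courantNumber)$.

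The main obstacle is bookkeeping of signs. Specifically, one must check that $\textnormal{sgn}(f''_\pm)$ at the chosen representative equals $-\textnormal{sgn}(\courantNumber)$ consistently for both integrals. One must also check that the branch of $\arcsin(\courantNumber\sin\frequency_{\textnormal{SP}})$ matches the displayed expression, whose inner argument is odd in $\courantNumber$. I would verify this first at a concrete value, e.g.\ $\nu=0$, where $\frequency_{\textnormal{SP}}=\pm\frac{\pi}{2}$ and everything is explicit, and then argue by continuity in $\nu$ on $(-|\courantNumber|,|\courantNumber|)$, where no stationary points coalesce. Since \eqref{eq:stationaryPhaseApproximation} only gives the leading term, the $\bigO{\indexTime^{-3/2}}$ remainder would be justified by the next term of the standard stationary phase expansion for smooth periodic amplitudes. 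That term is of order $\indexTime^{-3/2}$ because the half-integer power terms vanish by symmetry of the quadratic-phase expansion.
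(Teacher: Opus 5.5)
This is the paper's argument: it simply applies \eqref{eq:stationaryPhaseApproximation} at the four non-degenerate stationary points of $f_\pm$ from \Cref{lemma:saddlePointsGreen}, and your prefactor, the conjugate pairing of $r=0,1$, and the $(-1)^{\indexTime+1}$ bookkeeping are all correct. (A wording slip: the $\bigO{\indexTime^{-3/2}}$ remainder holds because the odd-moment terms vanish, and these are the \emph{integer} powers $\indexTime^{-1},\indexTime^{-2},\dots$, not the half-integer ones.)

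On the signs you flagged, differentiating directly gives $f_\pm''(\frequency)=\mp\courantNumber(1-\courantNumber^2)\sin\frequency\,(1-\courantNumber^2\sin^2\frequency)^{-3/2}$, which is the opposite of the sign displayed in \Cref{lemma:saddlePointsGreen}, so rely on your $\nu=0$ check rather than the lemma; it does give $-\textnormal{sgn}(\courantNumber)$ at the representatives you need. Also, $\courantNumber\sin\frequency_{\textnormal{SP}}=(-1)^r\courantNumber\sqrt{(1-\nu^2/\courantNumber^2)/(1-\nu^2)}$ carries no extra $\textnormal{sgn}(\courantNumber)$ factor, unlike your intermediate formula.
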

A similar result is stated for $\firstGreenFunction_{\indexSpace}^{\indexTime} + \secondGreenFunction_{\indexSpace}^{\indexTime}$ in \cite[Section 5.3]{bouche2024analyse}.
We note, however, that the quantity $\firstGreenFunction_{\indexSpace}^{\indexTime} + \secondGreenFunction_{\indexSpace}^{\indexTime}$ is not meaningful in actual numerical simulation, for which knowledge of each individual Green function is needed.

\begin{remark}[Glancing modes and slow convergence of the $L^2$ norm]
    By \cite[Proposition A.1]{coulombel2015fully}, we know that if 
    \begin{equation*}
        \sum_{\indexTime\geq 0}|\secondGreenFunction_{0}^{\indexTime}|^2
        \qquad \text{is finite},
    \end{equation*}
    then necessarily there are no glancing modes.
    From \Cref{prop:greenTransition}, we obtain as detailed in Appendix \ref{app:L2TimeGreen} that    
    \begin{equation*}
        \sum_{\indexTime = 0}^{N}|\secondGreenFunction_{0}^{\indexTime}|^2
        \qquad \text{diverges with }N\text{ at rate}
        \quad 
        \frac{1}{\pi |\courantNumber| \sqrt{1-\courantNumber^2}}
        \ln(N).
    \end{equation*}
    This confirms that we face glancing modes, which we have previously identified.
    These glancing saddle points generate a slow decrease of $\secondGreenFunction_0^{\indexTime}$ in $\indexTime$, causing the previous series to diverge (although slowly) analogously to the harmonic series.
    
    This is also the cause of the fact that the $L^2$ norm of $\indexSpace\mapsto \secondGreenFunction_{\indexSpace}^{\indexTime}$ converges to its limit at the (slow) rate of $\bigO{\indexTime^{-1/2}}$.
    Indeed, the phase in the second integral in \eqref{eq:parseval}---responsible for the speed of convergence---admits (glancing) saddle points at $\pm\frac{\pi}{2}$, see the proof of \Cref{prop:asymptoticL2StableGreen}.
    
    Conversely, this can be compared to the case with stable boundary conditions, \confer{} \Cref{sec:stable}.
    Indeed, $\globalTruncationError_{\indexSpace}^{\indexTime}$ can be regarded as a second Green function attached/relative to the boundary.
    There, since we had $\globalTruncationError_{0}^{\indexTime} = \bigO{\indexTime^{-3/2}}$, we obtain 
    \begin{equation*}
        \sum_{\indexTime\geq 0}|\globalTruncationError_{0}^{\indexTime}|^2 < \infty
        \qquad \text{by comparison with} \qquad
        \sum_{\indexTime\geq 1}\frac{1}{\indexTime^3} < +\infty,
    \end{equation*}
    thanks to the absence of glancing saddle points, which secures a sufficiently rapid damping of the solution in the near-wall region.
    Moreover, for the same reason, the $L^2$ norm of $\indexSpace\mapsto\globalTruncationError_{\indexSpace}^{\indexTime}$ converges to its limit at the faster rate $\bigO{\indexTime^{-3/2}}$.
    Somehow, the ``stiffness'' induced by boundary conditions---regardless of their stability---absorbs the glancing saddle points which are present for the leap-frog scheme without boundary.
\end{remark}

The next result is stated for the sum of first and second Green functions in  \cite{bouche2024analyse}, only with a formal justification based on truncated modified equations.
\begin{proposition}[Front zones]\label{prop:frontGreenFunctions}
    Let $-1<\courantNumber<0$ without loss of generality, and $\indexTime\gg 1$.
    \begin{itemize}
        \item \strong{Spurious front} (group velocity $-\courantNumber$): let $\indexSpace\in\naturals$ such that $\indexSpace+\courantNumber\indexTime=\bigO{1}$. Then $\secondGreenFunction_{\indexSpace}^{\indexTime}$ is well approximated by
        \begin{equation*}
            \secondGreenFunction_{\indexSpace}^{\indexTime} \sim 
            (-1)^{\indexTime + 1}\left|\frac{(-1)^{\indexTime} - (-1)^{\indexSpace}}{2}\right|\frac{1}{(\frac{\courantNumber}{2}(\courantNumber^2-1) \indexTime)^{1/3}} \textnormal{Ai}\Biggl ( \frac{\indexSpace + \courantNumber\indexTime}{(\frac{\courantNumber}{2}(\courantNumber^2-1) \indexTime)^{1/3}} \Biggr ).
        \end{equation*}
        \item \strong{Physical front} (group velocity $\courantNumber$): let $-\indexSpace\in\naturals$ such that $\indexSpace-\courantNumber\indexTime=\bigO{1}$. Then $\secondGreenFunction_{\indexSpace}^{\indexTime}$ is well approximated by
        \begin{equation*}
            \secondGreenFunction_{\indexSpace}^{\indexTime} \sim 
            \left|\frac{(-1)^{\indexTime} - (-1)^{\indexSpace}}{2}\right|\frac{1}{(\frac{\courantNumber}{2}(\courantNumber^2-1) \indexTime)^{1/3}} \textnormal{Ai}\Biggl (- \frac{\indexSpace - \courantNumber\indexTime}{(\frac{\courantNumber}{2}(\courantNumber^2-1) \indexTime)^{1/3}} \Biggr ).
        \end{equation*}
    \end{itemize}
\end{proposition}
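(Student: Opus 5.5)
We would start from the Fourier representation of $\secondGreenFunction_{\indexSpace}^{\indexTime}$ obtained above, namely the sum of the two oscillatory integrals with phases $f_-$ (physical symbol) and $f_+$ (spurious symbol, weighted by $(-1)^{\indexTime+1}$) and common amplitude $g(\frequency)=(4\pi\sqrt{1-\courantNumber^2\sin^2\frequency})^{-1}$. By \Cref{lemma:saddlePointsGreen}, when $|\nu|=|\courantNumber|$ the saddle points are degenerate. Under the convention of \eqref{eq:spWithoutBoundary}, with $-1<\courantNumber<0$ we have $\nu/\courantNumber=-1$ for the spurious front $\nu=-\courantNumber$. The degenerate saddles are then $\frequency=0$ for one of $f_\pm$ and $\frequency=\pm\pi$ for the same phase, while the other phase has no real critical point and contributes only $o(\indexTime^{-1/3})$, by integration by parts on the periodic interval. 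The physical front $\nu=\courantNumber$ is handled symmetrically, with the roles of $f_+$ and $f_-$ exchanged. We therefore localize each integral near the two points $\frequency=0$ and $\frequency=\pi\equiv-\pi$.

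Near each degenerate point we Taylor expand the phase. For the spurious front, set $\nu=-\courantNumber+\alpha$ with $\alpha=\delta/\indexTime$ and $\delta=\indexSpace+\courantNumber\indexTime=\bigO{1}$, and use the phase $f_+$, for which $\arcsin(\courantNumber\sin\frequency)=\courantNumber\frequency-\tfrac{1}{6}\courantNumber(1-\courantNumber^2)\frequency^3+\bigO{\frequency^5}$. This gives
\[
\indexTime f_+(\frequency;\nu)=\delta\frequency-\tfrac{1}{6}\courantNumber(1-\courantNumber^2)\indexTime\frequency^3+\dots .
\]
At $\frequency=\pi+\eta$ the same computation holds with $\sin$ replaced by $-\sin$, which flips the cubic sign relative to the linear term, and adds the phase factor $e^{i\indexSpace\pi}=(-1)^{\indexSpace}$. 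Rescaling $\frequency=v/(\tfrac{\courantNumber}{2}(\courantNumber^2-1)\indexTime)^{1/3}$ and freezing the amplitude at $g(0)=g(\pi)=1/(4\pi)$, each local integral becomes
\[
\frac{1}{4\pi}\,\frac{1}{(\frac{\courantNumber}{2}(\courantNumber^2-1)\indexTime)^{1/3}}\int_{\reals}e^{i(xv+v^3/3)}\,\differential{v}
=\frac{1}{2}\,\frac{\textnormal{Ai}(x)}{(\frac{\courantNumber}{2}(\courantNumber^2-1)\indexTime)^{1/3}},
\]
with $x=\pm(\indexSpace+\courantNumber\indexTime)/(\frac{\courantNumber}{2}(\courantNumber^2-1)\indexTime)^{1/3}$. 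The sign of $x$ has to be fixed by matching signs, and at the point $\frequency=\pi$ one uses the evenness of $\int e^{i(xv+v^3/3)}\differential{v}$ under $v\mapsto-v$.

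Summing the contributions from $\frequency=0$ and $\frequency=\pi$ gives the factor $\tfrac{1}{2}(1+(-1)^{\indexSpace})$ for one front. Multiplying by the prefactor $(-1)^{\indexTime+1}$ attached to $f_+$ then produces $\tfrac{1}{2}(1\pm(-1)^{\indexSpace+\indexTime})$ in combination with the parity support property of the lemma. This matches $|((-1)^{\indexTime}-(-1)^{\indexSpace})/2|$ once the parity of $\indexSpace$ relative to $\indexTime$ is taken into account; recall that $\secondGreenFunction^{\indexTime}_{\indexSpace}=\firstGreenFunction^{\indexTime+1}_{\indexSpace}$ vanishes unless $\indexSpace+\indexTime$ is odd. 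For the physical front, carried by $f_-$ with no sign prefactor, the analogous computation gives the Airy function with argument $-(\indexSpace-\courantNumber\indexTime)/(\cdots)^{1/3}$, the sign flip coming from the opposite sign of the cubic term in $f_-$.

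The main obstacle is justifying the localization uniformly in $\delta$. The quartic remainders and the variation of $g$ produce corrections of relative order $\indexTime^{-1/3}$, and this is the standard Chester--Friedman--Ursell or Wong argument already used in the proof of \Cref{prop:frontZoneStable}. We would invoke \cite[Chapter VII]{wong2001asymptotic} rather than redo it. Since $\alpha\to0$ like $\indexTime^{-1}$, the two coalescing saddles lie within $\bigO{\indexTime^{-1/2}}$ of each other, well inside the $\indexTime^{-1/3}$ Airy scale, so freezing the phase at the degenerate point suffices. The remaining bookkeeping risk is tracking the signs and parity factors consistently between the $0$ and $\pi$ contributions.
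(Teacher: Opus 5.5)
Your overall route (Fourier representation, cubic expansion at the degenerate points, Airy rescaling, uniformity via Chester--Friedman--Ursell/Wong) is the one the paper intends. The gap is that you attach the degenerate points to the wrong phases, and this is exactly where the parity prefactor, the nontrivial content of the statement, comes from.

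Since $f_\pm'(\xi;\nu)=\nu\pm \courantNumber\cos\xi/\sqrt{1-\courantNumber^2\sin^2\xi}$, and $\cos\xi/\sqrt{1-\courantNumber^2\sin^2\xi}$ equals $1$ only at $\xi=0$ and $-1$ only at $\xi=\pm\pi$, at $\nu=-\courantNumber$ the phase $f_+$ is stationary only at $\xi=0$. The phase $f_-$ is stationary only at $\xi=\pm\pi$. In \eqref{eq:spWithoutBoundary}, $r=0,1$ merely gives $\pm\xi_{\textnormal{SP}}$ for a fixed phase. Explicitly, with $\delta=j+\courantNumber n$ and $\lambda=\frac{\courantNumber}{2}(\courantNumber^2-1)>0$,
\[
n f_+(\pi+\eta;\nu)=j\pi+(\delta+2|\courantNumber|n)\eta+\dots,\qquad n f_-(\pi+\eta;\nu)=j\pi+\delta\eta+\tfrac13\lambda n\eta^3+\dots .
\]
So $\pi$ is not critical for $f_+$: replacing $\sin$ by $-\sin$ flips the linear part of the $\arcsin$ as well, not only the cubic. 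Conversely, your claim that the other phase has no real critical point and is $o(n^{-1/3})$ by integration by parts fails. Since $f_-'(\pm\pi;\nu)=\delta/n$, integration by parts gains nothing, and this piece is a full Airy contribution of order $n^{-1/3}$.

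With the correct assignment, the spurious front is the sum of two contributions:
\begin{itemize}
\item $(-1)^{n+1}\tfrac12\mathrm{Ai}(x)(\lambda n)^{-1/3}$ from $f_+$ at $0$;
\item $(-1)^{j}\tfrac12\mathrm{Ai}(x)(\lambda n)^{-1/3}$ from $f_-$ at $\pm\pi$, which has the same sign pattern, so no reflection is needed.
\end{itemize}
Their sum is $\frac{(-1)^j-(-1)^n}{2}\mathrm{Ai}(x)(\lambda n)^{-1/3}$, which is exactly $(-1)^{n+1}\bigl|\frac{(-1)^n-(-1)^j}{2}\bigr|$ times the Airy factor. Symmetrically, the physical front comes from $f_-$ at $0$ plus $f_+$ at $\pm\pi$, the latter carrying $(-1)^{n+1}(-1)^j$. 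This gives $\frac{1-(-1)^{n+j}}{2}\mathrm{Ai}(-y)(\lambda n)^{-1/3}$ with $y=(j-\courantNumber n)(\lambda n)^{-1/3}$.

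Your bookkeeping instead yields $\frac{(-1)^{n+1}(1+(-1)^j)}{2}$. This is nonzero when $n,j$ are both even, where $\secondGreenFunction_j^n$ vanishes identically, and zero when $n$ is even and $j$ odd. Multiplying by $(-1)^{n+1}$ does not produce $\tfrac12(1\pm(-1)^{j+n})$, and the parity/support property cannot be invoked to repair a wrong amplitude.

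Finally, if the cubic really were flipped relative to the linear term at $\pi$, you would obtain $\mathrm{Ai}(-x)$, not $\mathrm{Ai}(x)$. The substitution $v\mapsto -v$ flips both terms, so the evenness you invoke does not help.
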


\Cref{prop:greenTransition} and \ref{prop:frontGreenFunctions} are proved analogously to the proofs in \Cref{sec:proofsLeapFrog}, using \eqref{eq:stationaryPhaseApproximation} for the former and an analogous procedure to \Cref{prop:frontZoneStable} for the latter.
They achieve a full characterization of the Green functions in the long-time regime.

\section{Conclusions}\label{sec:Conclusions}

In this work, we have provided an accurate description of the long-time behavior of some multi-step Finite Difference schemes in presence of boundaries. 
By leveraging the framework of the $\timeShiftOperator$-transform, we have reformulated the numerical scheme in complex variables, thereby enabling a robust characterization of the structure of particular solutions---those linked to (second) Green functions relative to the boundary---by means of steepest descent techniques and analytic combinatorics. 
Such approach has been applied for the well established leap-frog scheme, as a canonical representative of the class of non-dissipative bulk numerical schemes, in presence of stable and unstable boundary conditions, and for a dissipative bulk scheme, in presence of stable boundary conditions. 
In both instances, the theoretical analysis yielded the leading-order behaviors, which have been validated through numerical simulations.
Notice that the solution of the leap-frog scheme is peculiarly rich in different structures as the scheme allows non-damped saddle points associated with any group velocity between zero and (minus) the Courant number.
Our findings explain why---in many simulations in \cite{bellotti2025stability}---structures propagating at speed $-\courantNumber>0$ were the most visible one. Indeed, they correspond to degenerate saddle-points and therefore exhibit the slowest damping in time.

Finally, the same strategies have been deployed for the analysis of the Green functions of the leap-frog scheme on $\relatives$, \idEst{} without boundary conditions, showing similarities with the case with a boundary, despite a simpler setting thanks to the availability of the Fourier transform.

\section*{Acknowledgements}
Tommaso Tenna received funding from the European Union's Horizon Europe research and innovation program under the Marie Skłodowska-Curie Doctoral Network DataHyking (Grant No. 101072546). Tommaso Tenna is member of GNCS-INdAM research group.

Thomas Bellotti thanks Benjamin Boutin (Université de Rennes) for discussions on glancing points and related references.

\bibliographystyle{acm}
\bibliography{biblio}

\appendix

\section{Explicit expression of $\globalTruncationError_{\indexSpace}^{\indexTime}$ for \Cref{ex:upwind}}\label{app:explicitExpressionUpwind}

The proof of the following result is based on the repeated use of the generalized binomial theorem and Neumann series, and not provided for the sake of room.
\begin{proposition}[Explicit expression of $\globalTruncationError_{\indexSpace}^{\indexTime}$ for \Cref{ex:upwind}]\label{prop:explicitExpressionError}
    Let the coefficient linked to the boundary scheme in \Cref{ex:upwind} be
    \begin{align*}
        \beta_1 = 1+\courantNumber, \qquad 
        \beta_2 = -\courantNumber^2, \qquad 
        \beta_{2r} = -\frac{1}{2}\sum_{p = \lfloor\frac{r + 1}{2}\rfloor}^{r}\binom{1/2}{p} \binom{p}{r-p} (2(2\courantNumber^2-1))^{2p-r}&, \\
        \text{and}\quad
        \beta_{2r - 1} = 0&, \qquad r\geq 2.
    \end{align*}
    Let \Cref{ass:stableBulk} hold.
    Then, an explicit expression of $\globalTruncationError_{\indexSpace}^{\indexTime}$, solution of \eqref{eq:zeroInitialError}--\eqref{eq:schemeErrorInitial}--\eqref{eq:schemeErrorEventual}, is given by  
    \begin{multline}\label{eq:nonRecurrentExpressionError}
        \globalTruncationError_{\indexSpace}^{\indexTime} = \sum_{k = \indexSpace}^{\lfloor \frac{\indexTime+\indexSpace-1}{2}\rfloor }\Biggl ( \sum_{\ell = 0}^{\indexSpace} \sum_{s = 0}^{\indexSpace-\ell} \sum_{p = \lfloor\frac{k-s + 1}{2}\rfloor}^{k-s} \frac{(-1)^{\indexSpace-\ell-s}}{(2\courantNumber)^{\indexSpace}}\binom{\indexSpace}{\ell}  \binom{\indexSpace-\ell}{s}  \binom{\ell/2}{p} \binom{p}{k-s-p} (2(2\courantNumber^2-1))^{2p-k+s} \Biggr ) \\
        \times \Biggl ( \sum_{r = 1}^{\indexTime+\indexSpace-2k-1}\sum_{\alpha_1 + \dots+\alpha_{r} = \indexTime+\indexSpace-2k-1} \beta_{\alpha_1}\times \cdots\times \beta_{\alpha_r} \Biggr ).
    \end{multline}
\end{proposition}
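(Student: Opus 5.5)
The plan is to compute the ordinary generating function in time (the $\timeShiftOperator$-transform evaluated at $\timeShiftOperator^{-1}$), already obtained in the proof of \Cref{prop:nearWallStable}, and to extract coefficients by formal power series manipulations only. For \Cref{ex:upwind} we have $\coefficientEventualBoundaryScheme_0=1+\courantNumber$ and $\coefficientEventualBoundaryScheme_1=-\courantNumber$, all other coefficients vanishing. Hence
\begin{equation*}
\zTransformed{\globalTruncationError}_{\indexSpace}(\timeShiftOperator^{-1})=\frac{\timeShiftOperator\,\stableRoot(\timeShiftOperator^{-1})^{\indexSpace}}{1-(1+\courantNumber)\timeShiftOperator+\courantNumber\timeShiftOperator\,\stableRoot(\timeShiftOperator^{-1})},
\qquad
\stableRoot(\timeShiftOperator^{-1})=\frac{\timeShiftOperator^2-1+S(\timeShiftOperator)}{2\courantNumber\timeShiftOperator},
\end{equation*}
where $S(\timeShiftOperator)=(1+w)^{1/2}$ and $w=2(2\courantNumber^2-1)\timeShiftOperator^2+\timeShiftOperator^4$. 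By \Cref{ass:stableBulk} and \Cref{lemma:BranchPoints}, this function is analytic in $\unitDisk$. It therefore suffices to identify the Taylor coefficient of $\timeShiftOperator^{\indexTime}$, and all expansions below can be treated as formal power series around $0$.

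\emph{Denominator.} Since $\courantNumber\timeShiftOperator\,\stableRoot(\timeShiftOperator^{-1})=\tfrac12(\timeShiftOperator^2-1+S)$, the denominator equals $\tfrac12-(1+\courantNumber)\timeShiftOperator+\tfrac12\timeShiftOperator^2+\tfrac12 S$. Expand $S=\sum_{p}\binom{1/2}{p}w^p$ and $w^p=\timeShiftOperator^{2p}(2(2\courantNumber^2-1)+\timeShiftOperator^2)^p$ by the binomial theorem. The coefficient of $\timeShiftOperator^{2r}$ in $S$ is then $\sum_{p=\lfloor (r+1)/2\rfloor}^{r}\binom{1/2}{p}\binom{p}{r-p}(2(2\courantNumber^2-1))^{2p-r}$, and $S$ contains only even powers. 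Collecting terms, the denominator is $1-\sum_{m\geq1}\beta_m\timeShiftOperator^m$: the constant term is $1$; $\beta_1=1+\courantNumber$; $\beta_2=-\tfrac12-\tfrac12(2\courantNumber^2-1)=-\courantNumber^2$; for $r\geq2$, $\beta_{2r}$ is minus one half of the coefficient above; and $\beta_{2r-1}=0$. Since the series $\sum_m\beta_m\timeShiftOperator^m$ has no constant term, the Neumann series $\sum_{r\geq0}\bigl(\sum_m\beta_m\timeShiftOperator^m\bigr)^r$ is a valid formal inverse. Its coefficient of $\timeShiftOperator^M$ is the sum over compositions $\alpha_1+\dots+\alpha_r=M$ of $\beta_{\alpha_1}\cdots\beta_{\alpha_r}$, which is the second factor in \eqref{eq:nonRecurrentExpressionError}.

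\emph{Numerator.} Write $\timeShiftOperator\,\stableRoot(\timeShiftOperator^{-1})^{\indexSpace}=\timeShiftOperator^{1-\indexSpace}(2\courantNumber)^{-\indexSpace}(\timeShiftOperator^2-1+S)^{\indexSpace}$ and expand in three stages:
\begin{itemize}
\item[(i)] $(S+(\timeShiftOperator^2-1))^{\indexSpace}=\sum_\ell\binom{\indexSpace}{\ell}S^\ell(\timeShiftOperator^2-1)^{\indexSpace-\ell}$;
\item[(ii)] $(\timeShiftOperator^2-1)^{\indexSpace-\ell}=\sum_s\binom{\indexSpace-\ell}{s}(-1)^{\indexSpace-\ell-s}\timeShiftOperator^{2s}$;
\item[(iii)] $S^\ell=(1+w)^{\ell/2}=\sum_p\binom{\ell/2}{p}w^p$, with $w^p$ expanded as before.
\end{itemize}
The coefficient of $\timeShiftOperator^{2k}$ in $(\timeShiftOperator^2-1+S)^{\indexSpace}$ is then exactly the first bracket of \eqref{eq:nonRecurrentExpressionError}, including the factor $(2\courantNumber)^{-\indexSpace}$.

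\emph{Cauchy product.} Since $\timeShiftOperator^2-1+S=2\courantNumber^2\timeShiftOperator^2+\bigO{\timeShiftOperator^4}$, the $\indexSpace$-th power vanishes to order $2\indexSpace$. Hence only $k\geq\indexSpace$ contributes, and the numerator is a series in powers $\timeShiftOperator^{2k-\indexSpace+1}$. Matching the total exponent with $\indexTime$ in the product with the Neumann series forces $M=\indexTime+\indexSpace-2k-1\geq0$, which gives the upper bound $k\leq\lfloor(\indexTime+\indexSpace-1)/2\rfloor$ and the formula.

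The main obstacle is the bookkeeping of summation ranges, together with one consistency check. When $M=0$ the Neumann series contributes $1$ through the empty product with $r=0$, whereas the stated formula sums from $r=1$. I would check whether this term is genuinely absent or whether the claim implicitly uses the convention that the empty composition sum equals one. To decide, I would compare with small cases, for instance $\globalTruncationError_0^1=1$ and $\globalTruncationError_{\indexSpace}^{\indexTime}=0$ for $\indexSpace\geq\indexTime$, and adjust the convention or the range of $r$ if needed. The other points to verify are the lower limits $p\geq\lfloor(k-s+1)/2\rfloor$, which come from $\binom{p}{k-s-p}=0$ when $k-s-p>p$, and the branch of $S$: it must be fixed by $S(0)=1$ so that $\stableRoot(\timeShiftOperator^{-1})\to0$, consistently with the choice made in \Cref{lemma:BranchPoints}.
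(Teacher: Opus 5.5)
Your proposal is correct and follows the route the paper indicates for this statement, whose proof it omits: expand $S=(1+w)^{1/2}$ and $S^{\ell}$ by the generalized binomial theorem, write the upwind boundary denominator as $1-\sum_{m\geq 1}\beta_m\timeShiftOperator^m$, invert it by a Neumann series, and take the Cauchy product. Your worry about the case $\indexTime+\indexSpace-2k-1=0$ is justified: read literally, with $r\geq 1$, the formula returns $0$ for $\globalTruncationError_0^1=1$ and for $\globalTruncationError_1^2=\courantNumber$, so in that case the second factor must be read as $1$, which is the $r=0$ term of the Neumann series.
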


\section{Leftover proofs}

\begin{proof}[Proof of \Cref{lemma:saddlePoints}]
We first observe that 
\begin{equation*}
    f'(\timeShiftOperator; \nu) = \frac{1}{\timeShiftOperator} + \nu \frac{\stableRoot'(\timeShiftOperator)}{\stableRoot(\timeShiftOperator)}
    \qquad \text{and}\qquad
    f''(\timeShiftOperator; \nu) = -\frac{1}{\timeShiftOperator^2} + \nu \frac{\stableRoot''(\timeShiftOperator)\stableRoot(\timeShiftOperator)-(\stableRoot'(\timeShiftOperator))^2}{\stableRoot(\timeShiftOperator)^2}.
\end{equation*}
Using the explicit expression of $\stableRoot$, the saddle points satisfy $f'(\timeShiftOperator, \nu) = 0$, which becomes
\begin{equation}\label{eq:spEquationFraction}
    \frac{{\left(\nu + 1\right)} \timeShiftOperator^{4} + 2  {\left(2  {\courantNumber}^{2} - 1\right)} \timeShiftOperator^{2} - \sqrt{\timeShiftOperator^{4} + 2  {\left(2  {\courantNumber}^{2} - 1\right)} \timeShiftOperator^{2} + 1} {\left({\left(\nu + 1\right)} \timeShiftOperator^{2} + \nu - 1\right)} - \nu + 1}{\timeShiftOperator^{5} + 2  {\left(2  {\courantNumber}^{2} - 1\right)} \timeShiftOperator^{3} - \sqrt{\timeShiftOperator^{4} + 2  {\left(2  {\courantNumber}^{2} - 1\right)} \timeShiftOperator^{2} + 1} {\left(\timeShiftOperator^{3} - \timeShiftOperator\right)} + \timeShiftOperator} = 0.
\end{equation}
For $\nu = 0$, the previous equation is $\timeShiftOperator^{-1} = 0$, which has no finite solution.
Let us assume $\nu > 0$ in the remainder of the proof.
The denominator in \eqref{eq:spEquationFraction} vanishes for $\timeShiftOperator = 0$ and for $\timeShiftOperator$ such that $\timeShiftOperator^4 + 2(2\courantNumber^2 - 1)\timeShiftOperator^2 + 1=0$, that is on the branch points by \Cref{lemma:BranchPoints}.
The numerator vanishes for $\timeShiftOperator$ such that $\varphi_4(\timeShiftOperator)\definitionEquality (1-\nu^2)\timeShiftOperator^4 + 2(2\courantNumber^2-\nu^2-1)\timeShiftOperator^2 + 1-\nu^2 = 0$. 
\begin{itemize}
    \item Let $\nu\in (0, |\courantNumber|)$.
    One can use the results in \cite[Chapter 4]{strikwerda2004finite} to show that the zeros of $\varphi_4$ are on $\unitCircle$.
    Straightforward computations provide that these zeros are
    \begin{equation*}
            e^{\pm i \saddlePointAngle}
            \quad \text{and}\quad
            e^{\pm i (\saddlePointAngle\mp \pi)}
            , 
            \qquad \text{where}\qquad
            \saddlePointAngle 
            \definitionEquality
            \tfrac{1}{2}
            \arccos
            \Bigl( 
            \frac{1+\nu^2 - 2\courantNumber^2}{1-\nu^2}
            \Bigr )
            \in (0, \branchPointAngle),
        \end{equation*}
    hence are distinct from the branch points by \Cref{lemma:BranchPoints}, which make the denominator of \eqref{eq:spEquationFraction} vanish.
    This shows that $e^{\pm i \vartheta}$ and $e^{\pm i (\saddlePointAngle\mp \pi)}$ are the sought saddle points points, since they fulfill \eqref{eq:spEquationFraction}.
    
    Equation \eqref{eq:ksSaddlePoints} follows from some algebra, and $f'(e^{\pm i \saddlePointAngle}; \nu), f'(e^{\pm i (\saddlePointAngle\mp \pi)}; \nu) \in i\reals$ follows from the definition of principal determination of the complex logarithm.
        
    We conclude this case on the non-degeneracy of these saddle points.
    Let $\saddlePoint\in\unitCircle$ be one of the four previously identified saddle points and assume degeneracy, that is $f'(\saddlePoint, \nu) = f''(\saddlePoint; \nu) = 0$.
    We obtain $\saddlePoint^{-1}f'(\saddlePoint; \nu) + f''(\saddlePoint; \nu) = 0$, which becomes
    \begin{equation*}
        \frac{\stableRoot'(\saddlePoint)}{\saddlePoint} + \frac{\stableRoot''(\saddlePoint)\stableRoot(\saddlePoint)-(\stableRoot'(\saddlePoint))^2}{\stableRoot(\saddlePoint)} = 0.
    \end{equation*}
    This is a fraction whose denominator cannot vanish as $\saddlePoint, \stableRoot(\saddlePoint)\in\unitCircle$.
    Quite the opposite, the numerator vanishes if $(\courantNumber^2-1)^2\courantNumber^4\saddlePoint^4(\saddlePoint^2-1)^2(\saddlePoint^4+2(2\courantNumber^2-1)\saddlePoint^2 + 1)$ does.
    This only happens if $\saddlePoint = \pm 1$, which is not possible as $\saddlePointAngle>0$.
    The non-degeneracy makes a smooth transition to the following case, where degeneracy takes place.

    \item Let $\nu = |\courantNumber|$. This can be discussed simply by taking the limit of the previous case for $\nu\nearrow-\courantNumber$.
    
    \item Let $\nu \in (|\courantNumber|, 1)$.
    In this case, it is easily shown that the saddle points are real, since they are given by
    \begin{equation}\label{eq:realSaddlePoints}
        \saddlePoint = (-1)^{\alpha} \sqrt{\frac{2\courantNumber^2-\nu^2-1 +(-1)^{\beta} \sqrt{(\courantNumber^2-1)(\courantNumber^2-\nu^2)} }{\nu^2-1}}, 
        \qquad \alpha, \beta \in \{0, 1\}.
    \end{equation}
    Two of them lie in $\unitDisk$ ($\beta = 0$) whereas the other two are in $\neighborhoodInfinity$ ($\beta = 1$).
\end{itemize}
\end{proof}

\section{Study of the diverging series $\sum_{\indexTime\geq 0}|\secondGreenFunction_{0}^{\indexTime}|^2$}\label{app:L2TimeGreen}

Let us investigate $\sum_{\indexTime\geq 0}|\secondGreenFunction_{0}^{\indexTime}|^2$ and its partial sums, to demonstrate that the series diverges and at which speed. 
From \Cref{prop:greenTransition}, we obtain that    
    \begin{equation*}
        \secondGreenFunction_{0}^{\indexTime}
        = 
        \frac{1}{\sqrt{2\pi}} |\courantNumber|^{-1/2}(1-\courantNumber^2)^{-1/4} (1-(-1)^{\indexTime})\cos\Bigl ( \indexTime\arcsin (|\courantNumber|) - \frac{\pi}{4}\textnormal{sgn}(\courantNumber)\Bigr )\indexTime^{-1/2}
        +\bigO{\indexTime^{-3/2}}.
    \end{equation*}
    Thus, we have
    \begin{equation*}
        \sum_{\indexTime\geq 0}|\secondGreenFunction_{0}^{\indexTime}|^2
        = 
        C\sum_{\substack{\indexTime\text{ odd}\\\indexTime\geq 1}}
        \cos^2 ( \indexTime \arcsin (|\courantNumber|) - \tfrac{\pi}{4}\textnormal{sgn}(\courantNumber) ) \,
        \frac{1}{\indexTime}
        +
        \bigO{1}
    \end{equation*}
    Let us focus on the leading order term, which can be rewritten as
    \begin{equation*}
        \sum_{\substack{\indexTime\text{ odd}\\\indexTime\geq 1}}
        \cos^2 ( \indexTime \arcsin (|\courantNumber|) - \tfrac{\pi}{4}\textnormal{sgn}(\courantNumber) ) \,
        \frac{1}{\indexTime} = \frac{1}{2} \sum_{\indexTime 
        \geq 1} (1-(-1)^{\indexTime}) \cos^2 ( \indexTime \arcsin (|\courantNumber|)- \tfrac{\pi}{4}\textnormal{sgn}(\courantNumber) ) \,
        \frac{1}{\indexTime}.
    \end{equation*}
    Let $N \geq 1$ and let us consider
    \begin{multline*}
        \frac{1}{2} \sum_{\indexTime = 1}^N (1-(-1)^{\indexTime}) \cos^2 ( \indexTime \arcsin (|\courantNumber|) - \tfrac{\pi}{4}\textnormal{sgn}(\courantNumber) ) \, \frac{1}{\indexTime} \\
        = \frac{1}{2} \sum_{\indexTime = 1}^N (1-(-1)^{\indexTime}) \Bigl ( 1 + \underbrace{\cos (2\indexTime \arcsin (|\courantNumber|) - \tfrac{\pi}{2}\textnormal{sgn}(\courantNumber) )}_{(-1)^{\textnormal{sgn}(\courantNumber)} \sin(2\indexTime\arcsin (|\courantNumber|)} \Bigr ) \, \frac{1}{\indexTime} \\
        = \frac{1}{2} \Biggl ( \sum_{\substack{\indexTime = 1 \\ \indexTime \textnormal{ odd}} }^N \underbrace{\frac{1}{\indexTime}}_{a_\indexTime} + (-1)^{\textnormal{sgn}(\courantNumber)} \sum_{\indexTime=1}^N \underbrace{\frac{(1-(-1)^{\indexTime})  \sin(2\indexTime\arcsin (|\courantNumber|)}{\indexTime}}_{a_\indexTime b_\indexTime} \Biggr ).
    \end{multline*}
    Since the first sum is a divergent series for $N \to \infty$, the goal is showing that the trigonometric series $\sum_{\indexTime} a_\indexTime b_\indexTime$ strictly converges. By the algebraic properties of the infinite series, the sum of a divergent series and a convergent series must diverge, see \cite[Chapter 3]{rudin1976}. Consequently, establishing the convergence for the second series is sufficient to prove that $\sum_{\indexTime\geq 0}|\secondGreenFunction_{0}^{\indexTime}|^2$ diverges.\\
    To this aim, using the formula in \cite{knapp2009sines} on sines of angles in arithmetic progression, we obtain
    \begin{equation*}
        \sum_{\indexTime=1}^N b_\indexTime = 2 \sum_{k=0}^{\lfloor \tfrac{N-1}{2} \rfloor} 2 \sin((4k+2) \arcsin(|\courantNumber|)) 
        = 2 \frac{\sin^2\left(2(\lfloor \tfrac{N-1}{2} \rfloor + 1)\arcsin(|\courantNumber|)) \right)}{\sin(2\arcsin(|\courantNumber|))} \leq  \frac{2}{\sin(2\arcsin(|\courantNumber|))}.
    \end{equation*}
    The previous formula can be written since $0 < |\courantNumber| < 1$, then $\arcsin(|\courantNumber|) \in (0, \tfrac{\pi}{2})$ and $\sin(2\arcsin(|\courantNumber|)) > 0$. Furthermore, this entails that the partial sums of $b_n$ are bounded and by Dirichlet's test we conclude---see \cite[Theorem 3.42]{rudin1976}.
    
    Moreover, the fact that the partial sums of the harmonic series with odd terms diverge at rate $\tfrac{1}{2}\ln(N)$ entails that 
    \begin{equation*}
        \sum_{\indexTime = 0}^{N}|\secondGreenFunction_{0}^{\indexTime}|^2
        \qquad \text{diverges with }N\text{ at rate}
        \quad 
        \frac{1}{\pi |\courantNumber| \sqrt{1-\courantNumber^2}}
        \ln(N).
    \end{equation*}

\end{document}